\documentclass[11pt]{amsart}

\usepackage{mathrsfs,graphicx,latexsym,tikz,color,euscript,caption,multirow}
\usepackage{amsfonts,amssymb,amsmath,amscd,amsthm}
\usepackage{epstopdf}
\usepackage{hyperref}
\usepackage{upgreek}
\usepackage{comment}
\usepackage{pdfsync}

\usepackage{color}
\newcommand{\R}{\mathbb R}

\newcommand{\N}{\mathbb N}

\newcommand{\bfr}{\mathbf{r}}

\newcommand{\bfv}{\mathbf{v}}  
  
\newcommand{\action}{\mathcal A}

\newcommand{\rt}{\tilde{\mathbf{r}}}
\newcommand{\bfu}{\mathbf{u}}

\theoremstyle{plain}

\newcommand{\rr}{\mathbb R}

\newcommand{\A}{\mathcal A}
\newcommand{\Ah}{\mathcal{A}_{\mathbf{h}}}
\newcommand{\Ap}{\mathcal{A}_{\mathbf{p}}}

\newcommand{\D}{\mathcal D}
\newcommand{\bs}{\mathbf s}
 
\newcommand{\al}{\alpha}

\newcommand{\gm}{\gamma}

\newcommand{\ep}{\varepsilon}
\newcommand{\Om}{\Omega}
\newcommand{\dl}{\delta}

\newcommand{\lmd}{\lambda}

\newcommand{\vph}{\varphi}

\newcommand{\vn}{\vph_n}

\newcommand{\ey}{\frac{1}{2}}
\newcommand{\se}{\frac{2}{3}}
\newcommand{\sy}{\frac{1}{3}} 

\newcommand{\qt}{\tilde{q}}

\newcommand{\xb}{\bar{x}}

\newcommand{\xs}{x^*_0}

\newcommand{\la}{\langle}
\newcommand{\ra}{\rangle}

\newcommand{\ql}{q^{\lambda}}

\theoremstyle{plain}
\newtheorem{thm}{Theorem}[section]
\newtheorem{prop}{Proposition}[section]
\newtheorem{lem}{Lemma}[section]

\newtheorem{rem}{Remark}[section]

\theoremstyle{definition}
\newtheorem{dfn}{Definition}[section]
\newtheorem{asm}{Assumption}

\begin{document}

\title[Marchal-type lemmas]{Marchal-type lemmas for the restricted $(N+1)$-body problem with their applications}
 

\author{Kuo-chang Chen}
\address{Department of Mathematics, National Tsing Hua University, Hsinchu, Taiwan  \\
 \and Institute of Mathematics, Academia Sinica, Taipei, Taiwan}
 \email{kchen@math.nthu.edu.tw}

\author{Guowei Yu}
\address{Chern Institute of Mathematics and LPMC, Nankai University, Tianjin, China}
\email{yugw@nankai.edu.cn}

\thanks{K.C. is  supported in parts by the National Science and Technology Council (grant number NSTC 113-2115-M-007-012-MY3), and Academia Sinica (grant number AS-ASSA-113-02). G. Y. is supported by  NSFC (No. 12671224), Nankai Zhide Fundation and the Fundamental Research Funds for the Central Universities. 
}

\begin{abstract}
    The direct methods of calculus of variations have had a great success in the classical Newtonian $N$-body problem in this century. 
    First in finding periodic solutions and more recently in the existence of parabolic, hyperbolic and hyperbolic-parabolic solutions. 
    The foundation of many of these results is Marchal's lemma, which guarantees fixed-end local minimizers are collision-free and hence 
    classical solutions of the $N$-body problem. Meanwhile almost no result is available in the restricted problem through the direct 
    methods of calculus of variations, mainly due to the lack of a Marchal-type lemma. 

    In this paper such a lemma and its variation will be obtained for the restricted problem. We then apply them to the restricted problem to study unbounded (hyperbolic, parabolic, bi-hyperbolic and bi-parabolic) solutions, and bounded (periodic and quasi-periodic) solutions. We emphasize that the extension to the restricted case does not follow from the standard argument for the unrestricted case. 
\end{abstract}

\maketitle

\section{Introduction }\label{sec:intro}

The motion of Newtonian $N$-body problem in $\rr^d$ ($d \ge 2$) can be described by
\begin{equation}
\ddot{q}_i(t) = -\sum_{ j \ne i, j=1 }^{N}\frac{m_j (q_i(t) - q_j(t))}{|q_i(t) - q_j(t)|^3}, \;\;  \forall i = 1, 2, \dots, N \label{eq;Nbody}
\end{equation} 
where $m_i>0$ is the mass of the $i$-th body and $q_i(t) \in \rr^d$ represents its trajectory.

It is well-known solutions of the $N$-body problem correspond to collision-free critical points of the following action functional 
$$ \mathbb{A}(q; t_1, t_2) := \int_{t_1}^{t_2} \ey \sum_{i =1}^N m_i|\dot{q_i}|^2 + \sum_{1 \le i < j \le N} \frac{m_i m_j}{|q_i -q_j|} \,dt
$$ 

For a long time the application of the direct methods of calculus of variations to the Newtonian $N$-body problem was 
obstructed by the possibility of collision in action minimizers. Change began with the proofs of the Figure-Eight solution by Chenciner and Montgomery \cite{CM00} and the Hip-Hop solution by Chenciner and Venturelli \cite{CV00}, where the 
solutions were found as collision-free action minimizers under suitable symmetric constraints. 

Shortly afterwards, based on an averaging argument by Marchal \cite{Mc02}, the following result now 
known as \emph{Marchal's lemma} 
was proven by Chenciner for the Newtonian $N$-body problem in \cite{C02}, which rules out collision in fixed-end local minimizers.

\begin{thm}[Marchal's lemma] \label{thm;Marchal} If $q \in H^1_{t_1, t_2}(\rr^{Nd})$ is a fixed-end local minimizer of $\mathbb{A}$; 
i.e., there is an $\ep>0$, such that $\mathbb{A}(q; t_1, t_2) \le \mathbb{A}(p; t_1, t_2)$, for all $p \in H^1_{t_1, t_2}(\rr^{Nd})$ satisfying $p(t_i)=q(t_i)$, $i =1, 2$ and $\|p-q\|_{H^1} < \ep$,  then $q(t)$ is collision-free for all $t \in (t_1, t_2)$. \end{thm}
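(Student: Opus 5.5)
Suppose $q$ is a fixed-end local minimizer with a collision at some $t_0\in(t_1,t_2)$, and let $I\subset\{1,\dots,N\}$ be the cluster of bodies colliding at $t_0$. The argument has three steps: localize to an isolated collision, reduce to a two-body comparison via Marchal's averaging, and show the averaged action is strictly smaller than the action of $q$.

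\textbf{Step 1: isolating the collision.} Since $\mathbb{A}(q;t_1,t_2)<\infty$ and $q$ minimizes on every subinterval, one first shows the collision instants are isolated. Away from collisions $q$ is a classical solution. Near a collision, the Lagrange--Jacobi identity together with the energy relation gives the Sundman-type estimates $|q_i-q_j|\sim c|t-t_0|^{2/3}$ for bodies in the cluster, with asymptotic normalized configurations being central configurations. One can then choose a short window $[t_0-\delta,t_0+\delta]$ on which only the cluster $I$ collides, only once, and bodies outside $I$ stay uniformly far from it. The interactions with bodies outside $I$ are smooth there, so they contribute only $O(\delta)$ to the variation of the action.

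\textbf{Step 2: Marchal's averaging.} Fix two bodies $k,l\in I$ and consider variations that move only body $k$:
$$q^\xi_k(t)=q_k(t)+\xi\,\phi(t),\qquad \xi\in\rr^d,\ |\xi|=\epsilon,$$
where $\phi$ is a tent function equal to $1$ at $t_0$ and vanishing outside the window. Other choices are possible, for example shifting the relative coordinate of a two-body subcluster and moving the centre of mass accordingly. The kinetic term changes by $O(\epsilon^2/\delta)$ plus a linear term in $\xi$ that vanishes after averaging over $\xi\in S^{d-1}_\epsilon$. The key comparison is the potential average. For a sphere of radius $\epsilon$ in dimension $d\ge2$, the spherical average of $1/|x+\xi|$ is at most $1/|x|$ for $|x|\ge\epsilon$ (Newton's theorem when $d=3$, subharmonicity-type estimates in general). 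Near $t_0$, where $|x|\ll\epsilon$, the averaged potential stays bounded by roughly $C/\epsilon$. Meanwhile the original $\int m_km_l/|q_k-q_l|\,dt$ over the region $|q_k-q_l|\le\epsilon$ contributes about $\int_{|t-t_0|\lesssim\epsilon^{3/2}}|t-t_0|^{-2/3}dt\sim\epsilon^{1/2}$. The averaged contribution is only of order $\epsilon^{3/2}\cdot\epsilon^{-1}=\epsilon^{1/2}$ but with a strictly smaller constant. One computes this constant by reducing to the Kepler parabolic collision and Marchal's explicit integral, and it gives a net gain $-c\,\epsilon^{1/2}$. The interactions of body $k$ with the other members of $I$ are handled similarly: averaging over $\xi$ does not increase them, up to lower-order error.

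\textbf{Step 3: conclusion.} Choosing $\delta\sim\epsilon^{3/2}$, the kinetic cost is $O(\epsilon^{2}/\epsilon^{3/2})=O(\epsilon^{1/2})$, which must also be controlled with explicit constants. Some $\xi$ then has $\mathbb{A}(q^\xi)<\mathbb{A}(q)$. Since $\|q^\xi-q\|_{H^1}\to0$ as $\epsilon\to0$, this contradicts local minimality.

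\textbf{Main obstacle.} The delicate point is exactly this constant comparison in Step 2, since both the gain and the cost are of order $\epsilon^{1/2}$. I would first try Marchal's route: replace the true collision path by its asymptotic homothetic/parabolic profile. This is justified by the Sundman asymptotics and the blow-up to a central configuration. I would then verify the strict inequality for the pure Kepler model by explicit computation of the averaged action over the sphere, and treat multi-body clusters through the asymptotic central configuration.
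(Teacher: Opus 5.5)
Your outline follows the standard route that the paper relies on by citation: isolate the collision, use Sundman asymptotics, then apply Marchal's averaging. But there is a genuine gap. The decisive inequality is never established, and the justification you give for the potential term is false for $d=2$, which the statement covers.

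The bound ``the mean of $1/|x+\xi|$ over $|\xi|=\epsilon$ is at most $1/|x|$'' holds for $d\ge 3$. For $d=3$ this is Newton's theorem; for $d\ge 4$ it is \emph{super}harmonicity of $|x|^{-1}$. For $d=2$, however, $\Delta|x|^{-1}=|x|^{-3}>0$. So the circle mean strictly exceeds $1/|x|$ whenever $|x|>\epsilon$, and it diverges logarithmically at $|x|=\epsilon$.

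Along $|x(t)|\approx c|t-t_0|^{2/3}$, this excess is roughly $\epsilon^2/(4|x|^3)$. Integrated over $|t-t_0|\gtrsim\epsilon^{3/2}$, it produces a loss of order $\epsilon^{1/2}$, the same order as the inner gain. The same happens for the interaction of body $k$ with every other member of the cluster, so those terms are not lower order either. In the plane, the sign of the averaged potential change is therefore not a pointwise fact. It needs the Sundman profile of each pair in the cluster and an explicit evaluation of the time-integrated circle mean along $c|t|^{2/3}$. That is precisely the part your proposal only promises.

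The difficulty you single out as the main obstacle is one you create yourself. Choosing $\delta\sim\epsilon^{3/2}$ makes the kinetic cost $\epsilon^2/\delta$ comparable to the gain, and leaves the sign to unspecified constants. Instead, keep the cutoff $\phi$ fixed, or take $\delta\gg\epsilon^{3/2}$. The linear term still averages out, and the cost becomes $O(\epsilon^2/\delta)=o(\epsilon^{1/2})$.

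With that change, the case $d\ge 3$ closes at once. There you do not even need Step 1: $H^1$ H\"older continuity gives a gain $\gtrsim\epsilon$ against a cost $O(\epsilon^2)$. In $d=2$, the only real issue is the integral sign described above.

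Finally, Step 1 omits the ingredient that makes isolation work. A priori the minimizer solves the equations only off a closed null set of collision times. Before Lagrange--Jacobi can give convexity of the cluster's moment of inertia, you must show that the partial energy is continuous across that set. This comes from minimality under time reparametrizations. It is exactly the step the paper identifies as the crux of the unrestricted argument, and the one that breaks down in the restricted problem.
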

Later it was generalized by Ferrario and Terracini to general homogeneous potentials and equivariant problem in  \cite{FT04}. 

Since then the direct methods of calculus of variations have had a lot of success in the $N$-body problem, first in periodic solutions, see \cite{FT04}, \cite{TV07}, \cite{Ch08}, \cite{Yu21} and references within, then in parabolic solutions \cite{MV09}, hyperbolic solutions \cite{MV20}, and hyperbolic-parabolic solutions \cite{PT24}. In most of these works, Marchal's lemma played a fundamental role. 

If we introduce an additional body whose mass is much smaller than the given $N$ bodies, we may approximately 
assume the gravitational force of the newly introduced body (to be referred as \emph{the massless body}) is too small that 
it does not affect the motion of the $N$ large masses (to be referred as \emph{the primaries}). 
Then we obtain the so-called \emph{restricted $(N+1)$-body problem}, which studies the motion of the massless body under the 
gravitational force of the $N$ primaries, whose trajectories are given in advance, and can be described by the following equation
\begin{equation}
\label{eq;RNBP1} \ddot{x}(t) = \nabla_x U(x(t), t) =- \sum_{i=1}^N \frac{m_i(x(t) - q_i(t))}{|x(t) - q_i(t)|^3}, 
\end{equation}
where $U(x, t)$ is the time-dependent potential function
\begin{equation}
\label{eq;Uzt} U(x, t) = \sum_{i=1}^N \frac{m_i}{|x- q_i(t)|}.
\end{equation}

The restricted $(N+1)$-body problem is a good model in describing the motions of asteroids, comets and satellites in gravitational systems, 
and has been studied by many authors including Poincar\'e \cite{Poincare1993} and Birkhoff \cite{Birkhoff1966}. 
Well-known examples include the circular restricted three body problem and the elliptic restricted three body problem, where the motions of the two primaries are assumed to be circular and elliptic periodic solutions of the two body problem. 

Like the $N$-body problem, the restricted problem also has a variational formalism, and its solutions correspond to collision-free critical points of the following action functional
\begin{equation}
\label{eq;A-restricted} \A(x; t_1, t_2) = \int_{t_1}^{t_2} \ey |\dot{x}(t)|^2 + U(x(t), t) \,dt. 
\end{equation}
Despite this so far very few results were obtained through variational methods in the restricted problem. 
The only ones that we are aware of are in \cite{Terr00}, \cite{Sh19} and \cite{PT22}, where in the last two references the authors consider a rather special restricted $3$-body problem where the masses of the two primaries are equal and their motions are homothetic collision-ejection solutions with negative energy. Such a setting makes it relatively easy to rule out collision between the massless body and the primaries in a variational approach.

One of the reasons that variational methods are much less successful in the restricted problem is the lack of a Marchal-type lemma. 
The purpose of our paper is to prove such lemmas and demonstrate their potential in the restricted problem by proving the existence of several different types of bounded and unbounded solutions based on these lemmas. 

The extension of Marchal's lemma to the restricted problem does not follow from the standard argument for the unrestricted case. In fact, there is a mistake in the preprint \cite{Yu22} by the second author, where he thought the standard argument could work. The place where it fails is regarding the isolation of collision moments. Without this one does not have Sundman's estimates \cite{Sp69} near a collision moment, and then the blow-up argument and Marchal's average method followed up can not be used.  More details of this can be found in Section \ref{sec:local_minimizers_are_collision_free}.

\section{Main results} \label{sec:results}

Our first result is the following theorem, which rules out collision in fixed-end local minimizers. 
It can be seen as an analogue of Marchal's lemma for the restricted case. 
\begin{thm}
\label{thm;Marchal-restricted} Given any two times $t_1 <t_2$. If $q|_{[t_1, t_2]}$ is collision-free, for all $t \in [t_1, t_2]$, and $x \in H^1_{t_1, t_2}(\rr^d)$ is a fixed-end local minimizer of $\A$, i.e., there is an $\ep>0$, such that 
$$ \A(x; t_1, t_2) \le \A(y; t_1, t_2), \;\; \forall y \in H^1_{t_1, t_2}(\rr^d) \text{ with }  \|y-x\|_{H^1} < \ep \text{ and } y(t_i)=x(t_i), \; i =1, 2$$
then $x(t)$ is collision-free, for all $t \in (t_1, t_2)$, and a classical solution of \eqref{eq;RNBP1}.
\end{thm}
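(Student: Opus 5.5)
We follow the classical scheme (isolation of collisions, Sundman-type asymptotics, blow-up, Marchal averaging), but we must first establish isolation of collision instants by a method adapted to the time-dependent potential. Let $\Delta=\{t\in[t_1,t_2]: x(t)=q_i(t)\text{ for some } i\}$. Since $q|_{[t_1,t_2]}$ is collision-free and continuous, there is $\delta>0$ with $|q_i(t)-q_j(t)|\ge 3\delta$ for $i\ne j$. Hence near any collision instant the massless body is close to exactly one primary $q_i$, and $U(x,t)=m_i/|x-q_i(t)|+V(x,t)$ with $V$ smooth and bounded in a neighborhood. $\A(x;t_1,t_2)<\infty$ gives $|\Delta|=0$, and on each interval of $(t_1,t_2)\setminus\Delta$ the minimizer $x$ solves \eqref{eq;RNBP1}.

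\textbf{Step 1 (isolation and energy near collision).} The energy $E(t)=\ey|\dot x|^2-U(x,t)$ is not conserved, but $\dot E=-\partial_t U$. Near a collision with $q_i$ we have $|\partial_t U|\lesssim |\dot q_i|/|x-q_i|^2$, which is not a priori integrable. This is the main obstacle. We propose to work in the moving frame $z=x-q_i(t)$. There $z$ satisfies $\ddot z=-m_i z/|z|^3+F(z,t)$ with $F$ smooth and bounded, because $\ddot q_i$ is bounded. The corresponding Lagrangian $\ey|\dot z|^2+m_i/|z|+\langle \dot z,\dot q_i\rangle+\ldots$ differs from $\A$ by terms that, after integration by parts, become a boundary term plus a smooth potential. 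So $z$ is a fixed-end local minimizer of a Kepler action perturbed by a smooth potential $W(z,t)$. For this action the energy $\ey|\dot z|^2-m_i/|z|-W$ has derivative $-\partial_t W$, which is bounded. Energy is therefore bounded on each collision-free interval. We then use the Lagrange–Jacobi identity $\frac{d^2}{dt^2}\ey|z|^2=2E+m_i/|z|+O(|z|)$, which forces $|z|^2$ to be strictly convex near $\Delta$. A strictly convex function vanishing on $\Delta$ has isolated zeros, and the same convexity gives continuity of $|z|^2$ across collisions. This yields isolation of collision instants.

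\textbf{Step 2 (Sundman asymptotics).} At an isolated collision $t_0$, the standard Sundman argument for the perturbed Kepler problem gives $|z(t)|\sim (9m_i/2)^{1/3}|t-t_0|^{2/3}$. It also gives $z(t)/|z(t)|\to s^\pm$ as $t\to t_0^\pm$, using the boundedness of $E$ and of $F$. Rescaling $z_\lambda(t)=\lambda^{-2/3}z(t_0+\lambda t)$ and letting $\lambda\to 0$, the perturbation disappears. We obtain a parabolic collision-ejection Kepler blow-up $\bar z$ with directions $s^-$ and $s^+$. Since local minimality is scale-invariant under this rescaling, up to the vanishing perturbation, $\bar z$ is a fixed-end minimizer of the Kepler action on compact intervals, in the sense of the standard blow-up lemma (cf.\ \cite{FT04}).

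\textbf{Step 3 (averaging, and classical solution).} Apply Marchal's averaging argument, as in \cite{C02,FT04}, with $d\ge2$. Replace the blow-up path near $t_0$ by deformations that avoid the origin. Averaging the action of these deformations over a sphere of directions strictly decreases the Kepler action. This contradicts the minimality of $\bar z$. Equivalently, one can work directly with $x$: the error terms coming from $V$ and $W$ are $o(1)$ relative to the gain, on the scale of the perturbation. Hence $\Delta\cap(t_1,t_2)=\emptyset$. Then $x$ is collision-free on $(t_1,t_2)$, and first variation plus elliptic regularity make $x$ a $C^2$ solution of \eqref{eq;RNBP1}.
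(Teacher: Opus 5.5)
\textbf{Assessment.} Your moving-frame reduction is correct: with $z=x-q_i$, the cross term $\langle\dot z,\dot q_i\rangle$ integrates by parts to $-\langle z,\ddot q_i\rangle$ plus a fixed boundary term. Your Steps 2--3 are essentially the blow-up argument of Section \ref{sec;blow-up} combined with Proposition \ref{prop;rbar-not-minimizer}.

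\textbf{The gap is in Step 1.} From $\dot E=-\partial_tW$ on each component of $(t_1,t_2)\setminus\Delta$ you only get a bound for $E$ on each component separately, and that bound depends on the component. Nothing prevents $E\to-\infty$ along components shrinking to a putative accumulation point $t_0$. Then $2E+m_i/|z|$ need not be positive, and $|z|^2$ need not be convex there. In fact Step 1 uses nothing beyond the equation off $\Delta$, so it cannot succeed. Concatenate radial Kepler collision-to-collision arcs of durations $\tau_k=k^{-4}$. This gives a finite-action $H^1$ path that solves the equation off its collision set, and its collision instants accumulate at a point. On the $k$-th arc $E_k=-\tfrac12(2\pi m_i/\tau_k)^{2/3}\to-\infty$, and $\frac{d^2}{dt^2}\tfrac12|z|^2=E_k<0$ at each apocentre. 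Note also that continuity of $|z|^2$ across collisions is automatic from $z\in H^1$; what is actually needed is control of $E$ across $\Delta$.

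\textbf{How to repair it.} The missing ingredient must come from minimality, and in your frame it can be supplied. The term $m_i/|z|$ is time-independent, and the explicit time dependence sits only in the smooth $W$. So the reparametrizations $z(t+\epsilon\phi(t))+q_i(t)$, with $\phi\in C_c^\infty$ supported near $t_0$, are fixed-end, $H^1$-small competitors. Differentiating the action at $\epsilon=0$ gives $E'=-\partial_tW$ in $\mathcal{D}'$ on a whole neighbourhood of $t_0$, so $E$ is Lipschitz there. Then $\frac{d^2}{dt^2}\tfrac12|z|^2\ge m_i/|z|-C>0$ on every collision-free component near $t_0$. A component $(a,b)$ with $a,b\in\Delta$ would then carry a strictly convex function vanishing at both ends, which is impossible. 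Since $|\Delta|=0$, any second collision instant near $t_0$ would produce such a component, so $t_0$ is isolated.

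\textbf{Comparison with the paper.} With this repair your route is a genuine alternative to the paper's, which never touches the energy. Proposition \ref{prop;isolated-coll} rules out one-sided accumulation by surgery (with $\mu=m_i$):
\begin{itemize}
\item delete the stretch $[t_0,b_k]$ between two collisions, whose Kepler action is at least $3(\mu^2\pi^2/2)^{1/3}\dl_k^{1/3}$ by Gordon's theorem;
\item replace it by a pause at $\bfr(t_0-\dl_k)$ plus a time-shift, which costs at most $2(2/9)^{1/3}\mu^{2/3}\dl_k^{1/3}+O(\dl_k)$ by Sundman's estimate on the collision-free side.
\end{itemize}
The proof of the theorem then applies this at an endpoint of a collision-free component. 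Your repaired argument gives the stronger statement that every collision instant is isolated, at the price of the inner-variation computation.
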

\begin{rem}
    Notice that one can choose to start or/and end at a collision, so we can not rule out the possibility of collision at the boundary moments.
\end{rem}

For applications, particularly in finding periodic and quasi-periodic solutions with symmetries, it will be useful to have certain generalizations of the Marchal lemma to minimizing problems with free boundaries. For the unrestricted problem, such generalizations can be found in \cite{Ch06} by the first author. Our second result is the analogues for the restricted problem.  

\begin{dfn}
\label{dfn;free-boundary} Given any two different times $t_1 < t_2$. For $j =1, 2$, let $X_j$ be one of the following four closed subsets of $\rr^d$: 
\begin{enumerate}
    \item a proper linear subspace of $\rr^d$ with $1 \le \text{dim}(X_j) < d$;
    \item a straight line segment in $\rr^d$ connecting $q_{i_1}(t_j)$ and $q_{i_2}(t_j)$ with $1 \le i_1 \ne i_2 \le N$;
    \item a straight ray in $\rr^d$ starting from $q_i(t_j)$ with $i =1, \dots, N$;
    \item a point in $\rr^d$,
\end{enumerate}
we define the set of admissible paths as
$$ B_{t_1, t_2}(X_1, X_2) = \{ x \in H^1_{t_1, t_2}(\rr^d): x(t_1) \in X_1, x(t_2) \in X_2 \}.$$
\end{dfn}

\begin{thm}
\label{thm;Marchal-rest-2} If $x|_{[t_1, t_2]}$ is a local minimizer of $\A$ in $B_{t_1, t_2}(X_1, X_2)$, i.e., $\exists \ep>0$, such that 
$$\A(x; t_1, t_2) \le \A(y; t_1, t_2), \;\; \forall y \in B_{t_1, t_2}(X_1, X_2) \text{ satisfying } \|y-x\|_{H^1} < \ep,
$$
then $x(t)$ is collision-free, for all $t \in (t_1, t_2)$ and at the boundaries 
\begin{enumerate}
\item[(a)] if $X_j$, $j =1, 2$, satisfies condition $(1)$ in Definition \ref{dfn;free-boundary}, then $x(t_j)$ is collision-free with $\dot{x}(t^{+}_1)$ perpendicular to $X_1$ and $\dot{x}(t^-_{2})$ perpendicular to $X_2$;
\item[(b)] if $X_1$ satisfies condition $(2)$ in Definition \ref{dfn;free-boundary}, either $x(t_1)$ is collision-free with $\dot{x}(t_1^+)$ perpendicular to $X_1$, or $x(t_1) = q_{i_k}(t_1)$, for $k =1, 2$, with 
$$  \lim_{t \to t_1^{+}} \frac{x(t)- q_{i_k}(t)}{|x(t)- q_{i_k}(t)|} = -\frac{ q_{i_{3-k}}(t_1)-q_{i_k}(t_1) }{|q_{i_{3-k}}(t_1) -q_{i_k}(t_1)|}; 
$$
\item[(c)] if $X_2$ satisfies condition $(2)$ in Definition \ref{dfn;free-boundary}, either $x(t_2)$ is collision-free with $\dot{x}(t_2^-)$ perpendicular to $X_2$, or $x(t_2) = q_{i_k}(t_2)$, for $k =1, 2$, with
$$  \lim_{t \to t_2^{-}} \frac{x(t)- q_{i_k}(t)}{|x(t)- q_{i_k}(t)|} = -\frac{q_{i_{3-k}}(t_2)-q_{i_k}(t_2)}{|q_{i_{3-k}}(t_2)-q_{i_k}(t_2)|}; 
$$
\item[(d)] if $X_1$ satisfies condition $(3)$ in Definition \ref{dfn;free-boundary}, either $x(t_1)$ is collision-free with $\dot{x}(t_1^+)$ perpendicular to $X_1$, or $x(t_1) = q_i(t_1)$ with
$$   \lim_{t \to t_1^{+}} \frac{x(t)- q_{i}(t)}{|x(t)- q_{i}(t)|} = -\frac{p- q_{i}(t_1)}{|p- q_{i}(t_1)|}, \;\; \forall p \in X_1 \setminus \{q_i(t_1)\}; $$
\item[(e)] if $X_2$ satisfies condition $(3)$ in Definition \ref{dfn;free-boundary}, either $x(t_2)$ is collision-free with $\dot{x}(t_2^-)$ perpendicular to $X_2$, or $x(t_2) = q_i(t_2)$ with
$$   \lim_{t \to t_2^{-}} \frac{x(t)- q_{i}(t)}{|x(t)- q_{i}(t)|} = -\frac{p- q_{i}(t_2)}{|p- q_{i}(t_2)|}, \;\; \forall p \in X_2 \setminus \{q_i(t_2)\}. $$
\end{enumerate}
\end{thm}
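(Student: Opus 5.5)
The plan is to reduce the interior statement to Theorem~\ref{thm;Marchal-restricted} and to treat the boundary statements with a blow-up analysis at a boundary collision. \emph{Interior collisions.} Any local minimizer in $B_{t_1,t_2}(X_1,X_2)$ is also a fixed-end local minimizer on every subinterval $[s_1,s_2]\subset[t_1,t_2]$, since competitors agreeing with $x$ at $s_1,s_2$ and extended by $x$ outside remain admissible. Theorem~\ref{thm;Marchal-restricted} then shows that $x$ is collision-free on $(t_1,t_2)$ and solves \eqref{eq;RNBP1} there. The remaining work is at $t_1$ and $t_2$; the two ends are symmetric, so I treat $t_1$.

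\emph{Boundary, collision-free case.} If $x(t_1)$ is not a collision, then $x$ is $C^2$ up to $t_1$. The first variation with free endpoint on $X_1$ gives $\langle \dot x(t_1^+), v\rangle = 0$ for every admissible tangent direction $v$ of $X_1$ at $x(t_1)$. This is immediate for a subspace. For a segment or ray, it also holds at interior points. At an endpoint of a segment or ray that is not a primary, one only gets an inequality; but in case (2) the endpoints are primaries, and in case (3) the only endpoint is $q_i(t_1)$, so the non-collision endpoint situation does not arise. This gives the perpendicularity claims.

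\emph{Boundary collision.} For (a), I must rule out $x(t_1)=q_i(t_1)$ when $X_1$ is a subspace through that point. In (b)--(e) I must identify the asymptotic direction. The key steps are as follows.
\begin{enumerate}
\item Near $t_1$ the other primaries are far away and $q_i$ is smooth. In the moving frame $z=x-q_i$ the problem is a Kepler problem perturbed by a smooth time-dependent force, $\ddot z=-m_i z/|z|^3+O(1)$.
\item Since $t_1$ is the only collision near $t_1$, with the interior collision-free, the usual Lagrange--Jacobi/Sundman estimates for a single (isolated) collision apply: $|z(t)|\sim c(t-t_1)^{2/3}$, and $z/|z|$ converges to some unit vector $s$. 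The collision being isolated is exactly what the paper stresses is needed.
\item Blow up: $z_\lambda(t)=\lambda^{-2/3}z(t_1+\lambda t)$ converges to the parabolic homothetic ejection $c\,t^{2/3}s$. This limit is a minimizer of the Kepler action among paths starting on the blown-up constraint set, which is the tangent cone of $X_1-q_i(t_1)$ at $0$. That cone is a subspace in (a), the ray toward $q_{i_{3-k}}(t_1)$ in (b)/(c), and the ray $X_1-q_i(t_1)$ in (d)/(e).
\item Compare with Marchal/Ferrario--Terracini type competitors that start at a nearby point of the cone and avoid collision. The averaging argument of \cite{Ch06} shows that a homothetic ejection from $0$ with direction $s$ is not a minimizer unless $s$ points exactly opposite to the cone. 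For a subspace, $-s$ always lies near a point of the cone with a collision-free better path, so (a) excludes collision. For a ray with direction $u$, every $s\neq -u$ is beaten, which yields the stated limit $-u$.
\item Transfer the action decrease from the blow-up to $x$ itself. Since the perturbation is $O(1)$ and the rescaled perturbation vanishes as $\lambda\to0$, a strict decrease for the limit gives a strict decrease for $x$ at small scale, contradicting minimality.
\end{enumerate}

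\emph{Main obstacle.} The delicate point is step 4 for rays, following \cite{Ch06}. Averaged competitors must be built starting from points of the ray, and it must be shown that they strictly beat the homothetic path whenever the ejection angle differs from $\pi$ relative to the ray. I would first try to use the explicit Kepler parabolic arcs joining $\varepsilon u$ to the far endpoint and compute their averaged action against the collision path.
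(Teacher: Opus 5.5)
Your proposal follows the paper's proof essentially step for step. Interior collisions are excluded by Theorem~\ref{thm;Marchal-restricted}, and perpendicularity comes from the natural boundary condition. A boundary collision, which is automatically one-sided isolated, is handled by Sundman asymptotics, the blow-up to the parabolic ejection, a collision-free Kepler competitor starting on the ray $\R_+\bs$ with $\bs\neq-\bs^+$, and rescaling back.

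The paper does not reprove your ``main obstacle'' but cites it as the known Kepler fact Proposition~\ref{prop;rbar-not-minimizer-2}. In case (a) it simply picks $\bs\in X_1\cap\mathbb S$ with $\bs\neq-\bs^+$, which is possible because $X_1\cap\mathbb S$ contains an antipodal pair. That is what your somewhat garbled subspace sentence in step 4 should say.
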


\begin{rem}
    From the minimizing property of the Kepler problem (see Proposition \ref{prop;rbar-not-minimizer-2}), it is impossible to rule out collisions completely at the boundaries in the above theorem besides in property (a) using local deformation methods. Depending on the choices of masses it might be possible to rule out some of these boundary collisions using level estimates, we will not explore in this direction as it is not the main purpose of our paper.   

    Meanwhile binary collisions are regularizable in both restricted and unrestricted problems (see \cite{LC20}, \cite{Moser70} and \cite{SS71}). However for a collision-ejection solution to be regularizable, we need both one-sided limits of the collision direction and the energy to exist and coincide when the time parameter approaches the collision moment from both sides (see Proposition \ref{prop;regularizble}). Because of this the results about one-sided limits in the above theorem are important, as with them we will be able to show these minimizers can be extended to regularizable solutions through proper symmetries even when there are collisions at the boundaries.  
\end{rem}

Now we shall demonstrate the potential of the above Marchal-type lemmas in the restricted problem. For this, recall that when the motions of the primaries are bounded, i.e., 
\begin{equation} \label{eq;r0}
    r_0=\sup \{ |q_i(t)|:\; \forall t \in \rr \text{ and } i = 1, \dots, N \} < \infty
\end{equation}
 the final motion of the massless body has the following four possibilities (see Chazy \cite{Chazy22}):  
\begin{itemize}
\item \textbf{hyperbolic}: $|x(t)| \to \infty$ and $|\dot{x}(t)| \to c > 0$, as $t \to \pm \infty$;
\item \textbf{parabolic}: $|x(t)| \to \infty$ and $|\dot{x}(t)| \to  0$, as $t \to \pm \infty$;
\item \textbf{bounded}: $\limsup_{t \to \pm \infty} |x(t)| < \infty$;
\item \textbf{oscillatory}: $\limsup_{t \to \pm \infty} |x(t)| =\infty$ and $\liminf_{t  \to \pm \infty} |x(t)| < \infty$, 
\end{itemize} 

Using Theorem \ref{thm;Marchal-restricted} and the approach from \cite{PT24}, we first prove the following two results about the existence of hyperbolic and parabolic solutions for the restricted problem. 
\begin{thm}
\label{thm;hyperbolic} When the motion of the primaries are bounded and collision-free, given any initial time $t_0 \in \rr$, initial configuration $x_0 \in \rr^d$ and asymptotic direction $\bs \in \mathbb{S}: = \{ x \in \rr^d: |x| =1 \}$,
 for any $h>0$, there is a hyperbolic solution $x|_{[t_0, +\infty)}$ satisfying $x(t_0) =x_0$ and 
    \begin{equation}
        \label{eq;hyp} x(t) = \sqrt{2h}t\bs + \frac{m}{2h} (\ln t) \bs + o(\ln t), \text{ as  } t \to +\infty, \; \text{ where } m = \sum_{i=1}^N m_i.
    \end{equation}
A similar result holds for $t \to -\infty$.  
\end{thm}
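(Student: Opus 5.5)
We follow the approach of Polimeni and Terracini \cite{PT24}, adapted to the time-dependent potential \eqref{eq;Uzt}. The idea is to look for the solution as $x(t)=r_h(t)+\varphi(t)$, where $r_h$ is a reference curve carrying the prescribed asymptotics. Take
$$r_h(t)=\sqrt{2h}\,t\,\bs+\frac{m}{2h}\ln t\,\bs$$
for $t$ large; this is an approximate solution of the Kepler problem with total mass $m$ at the origin. We then minimize the renormalized action
$$\mathcal{J}(\varphi)=\int_{t_0}^{\infty}\ey|\dot\varphi|^2+U(r_h+\varphi,t)-U_0(r_h)-\la \ddot r_h,\varphi\ra\,dt ,$$
where $U_0(x)=m/|x|$. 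This functional is formally $\A$ minus a $\varphi$-independent divergent term plus boundary terms. The admissible class is $\varphi\in \D_0^{1,2}(t_0,\infty)$, the completion of compactly supported functions under $\|\dot\varphi\|_{L^2}$, with the additional condition $\varphi(t_0)=x_0-r_h(t_0)$. Near $t_0$ we simply modify $r_h$ smoothly so that nothing is singular there.

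\textbf{Step 1: $\mathcal{J}$ is well defined and coercive.} Since $r_0<\infty$, we have $|U(x,t)-U_0(x)|\le C|x|^{-2}$ for $|x|\ge 2r_0$. The choice of the log term makes $\ddot r_h+\nabla U_0(r_h)=O(t^{-2}\ln t)$. Hardy's inequality $\int |\varphi|^2/t^2\le 4\int|\dot\varphi|^2$ then gives $|\varphi(t)|\le C\sqrt t\,\|\dot\varphi\|$. With this, the linear term and the potential differences are integrable and bounded by $o(\|\dot\varphi\|^2)+C$. The main care is needed in the region where $r_h+\varphi$ is not large. There we use the positivity of $U$ together with the crude bound $U_0(r_h)\le C/t$ and the Taylor expansion
$$U_0(r_h+\varphi)-U_0(r_h)-\la\nabla U_0(r_h),\varphi\ra\ge -C|\varphi|^2/t^3 .$$
This reproduces the estimates of \cite{PT24}, the only new ingredient being that $U-U_0$ is uniformly $O(|x|^{-2})$. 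Together these give coercivity, i.e.\ $\mathcal{J}(\varphi)\ge c\|\dot\varphi\|^2-C$, and weak lower semicontinuity follows from Fatou's lemma since $U\ge0$. Hence a minimizer $\varphi^*$ exists.

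\textbf{Step 2: collision-freeness and regularity.} Fix any compact interval $[t_1,t_2]\subset(t_0,\infty)$. Variations of $\varphi^*$ supported in $[t_1,t_2]$ change $\mathcal{J}$ exactly as they change $\A(x;t_1,t_2)$, because the subtracted terms are either independent of $\varphi$ or linear in $\varphi$ and become boundary terms after integration by parts. So $x=r_h+\varphi^*$ is a fixed-end minimizer of $\A$ on $[t_1,t_2]$. The primaries are collision-free, so Theorem \ref{thm;Marchal-restricted} applies and shows that $x$ is collision-free on $(t_1,t_2)$ and solves \eqref{eq;RNBP1}. Since $t_1,t_2$ are arbitrary, this holds on all of $(t_0,\infty)$, and $x(t_0)=x_0$ by construction.

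\textbf{Step 3: asymptotics (main obstacle).} The remaining task is to show $\varphi^*(t)=o(\ln t)$, which gives \eqref{eq;hyp}. The bound $|\varphi|=O(\sqrt t)$ from Step 1 is too weak, so we bootstrap with the Euler–Lagrange equation
$$\ddot\varphi=\nabla U(r_h+\varphi,t)-\ddot r_h .$$
First, $|\varphi|=o(t)$ implies $|x(t)|\ge ct$, so the right-hand side is $O(t^{-2}\ln t)+O(|\varphi|/t^3)$. Next, $\dot\varphi\in L^2$ combined with this integrable bound on $\ddot\varphi$ forces $\dot\varphi(t)\to0$. Finally, integrating the equation from $t$ to $\infty$ gives $|\dot\varphi(t)|=O(t^{-1}\ln t)$, hence $|\varphi|=O(\ln^2 t)$. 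This is still too weak for \eqref{eq;hyp}, so the estimates must be refined. We would expand the force to next order, $\nabla U_0(r_h+\varphi)-\nabla U_0(r_h)=O(|\varphi|/t^3)$, and note that the remaining error $\ddot r_h+\nabla U(r_h,t)$ is $O(t^{-2}\ln t)$ only in the radial-correction term. Replacing $r_h$ by the better reference curve obtained from the exact Kepler hyperbola with asymptote direction $\bs$ and energy $h$ reduces this error to $O(t^{-2})$. This yields $|\dot\varphi|=O(1/t)$, and a Gronwall-type argument on $\dot\varphi$ then shows $\dot\varphi=o(1/t)$, hence $\varphi=o(\ln t)$. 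The case $t\to-\infty$ follows by time reversal.
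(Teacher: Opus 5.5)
\textbf{Steps 1 and 2.} These follow the paper's route. The paper uses the linear reference path $\bar x(t)=\sqrt{2h}(t-t_0+1)\bs+r_0\bs$, so $\ddot{\bar x}=0$ and no linear term is needed. Your log-corrected $r_h$ with the extra term $-\langle\ddot r_h,\varphi\rangle$ is a harmless variant. Your integration-by-parts check, that compactly supported variations of $\mathcal J$ and of $\A$ agree, is correct.

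Two minor points there. First, the region where $r_h+\varphi$ is not large costs $-C\ln\|\dot\varphi\|$ rather than a constant; this is still enough for coercivity. Second, weak lower semicontinuity needs dominated convergence on $[T,\infty)$ plus Fatou on $[t_0,T]$, because the full integrand is not sign-definite.

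\textbf{Step 3: the genuine gap.} Your residual is miscomputed. With $\nabla U_0(x)=-mx/|x|^3$, so that the equation reads $\ddot x=\nabla U$, the relevant quantity is $\ddot r_h-\nabla U_0(r_h)$. Since $\ddot r_h=-\frac{m}{2h}t^{-2}\bs$ and $|r_h|^{-2}=\frac{1}{2ht^2}\bigl(1+O(t^{-1}\ln t)\bigr)$, this quantity is $O(t^{-3}\ln t)$: the log term cancels the entire $t^{-2}$ part.

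More importantly, the closing step does not follow from what you wrote. Suppose the forcing in $\ddot\varphi=\nabla U(r_h+\varphi,t)-\ddot r_h$ were only known to be $O(t^{-2})$, which is all you claim after switching to the exact hyperbola. It could then behave like $c\,t^{-2}$ with $c\neq0$, giving $\dot\varphi\sim -c/t$ and $\varphi\sim -c\ln t$. No Gronwall-type argument can upgrade $\dot\varphi=O(1/t)$ to $o(1/t)$ in that situation.

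What you need is $\ddot\varphi=o(t^{-2})$. It follows directly from the bound you dismissed as too weak:
\begin{itemize}
\item $|\varphi|=O(\sqrt t)$ gives $x(t)/t\to\sqrt{2h}\bs$.
\item Hence $t^2\nabla U(x(t),t)\to-\frac{m}{2h}\bs=t^2\ddot r_h$, so $t^2\ddot\varphi\to0$.
\item Since $\dot\varphi\in L^2$ and $\ddot\varphi\in L^1$ near infinity, $\dot\varphi(\infty)=0$. Then $\dot\varphi(t)=-\int_t^\infty\ddot\varphi=o(1/t)$ and $\varphi=o(\ln t)$.
\end{itemize}
This is exactly the paper's argument. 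In the paper's normalization the limit is $t^2\ddot\varphi\to-\frac{m}{2h}\bs$, which produces the $\frac{m}{2h}\ln t\,\bs$ term after two integrations.

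Alternatively, using the correct residual you get $\ddot\varphi=O\bigl(t^{-3}(\ln t+|\varphi|)\bigr)=O(t^{-5/2})$. This makes $\varphi$ bounded, i.e.\ an $O(1)$ error rather than $o(\ln t)$.
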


\begin{thm}
\label{thm;parabolic}  When the motion of the primaries are bounded and collision-free, given any initial time $t_0 \in \rr$, initial configuration $x_0 \in \rr^d$ and asymptotic direction $\bs \in \mathbb{S}$, there is a parabolic solution $x|_{[t_0, +\infty)}$ satisfying $x(t_0) =x_0$ and
    \begin{equation}
         \label{eq;para} x(t) = \al t^{\se} \bs  + O(t^{\ey}), \text{ as } t \to +\infty, \text{ where } \al =\left( 9m/2 \right)^{\frac{1}{3}}.
     \end{equation} 
A similar result holds for $t \to -\infty$.  
\end{thm}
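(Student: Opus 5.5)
We follow the approach of \cite{PT24} (and \cite{MV09} for the parabolic case), with Theorem \ref{thm;Marchal-restricted} replacing Marchal's lemma. The plan is to fix the parabolic Kepler-type reference path $\bar{x}(t) = \al t^{\se}\bs$ for $t \ge t_0$ (shifting time so that $t_0>0$). Here $\al=(9m/2)^{1/3}$ is chosen so that $\ddot{\bar x} = -m\bar x/|\bar x|^3$. We then look for solutions of the form $x = \bar{x} + \varphi$ with $\varphi(t_0) = x_0 - \bar x(t_0)$ and $\varphi \in D_0^{1,2}([t_0,\infty))$, the completion of compactly supported smooth functions under $\|\dot\varphi\|_{L^2}$. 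The action $\A$ is infinite along such paths, so we instead minimize the renormalized functional
\[
\mathcal{F}(\varphi) = \int_{t_0}^{\infty} \ey|\dot\varphi|^2 + U(\bar x+\varphi, t) - U_0(\bar x) - \nabla U_0(\bar x)\cdot \varphi \, dt, \qquad U_0(x)=\frac{m}{|x|}.
\]
Formally, $\mathcal F$ differs from $\A(\bar x+\varphi)$ only by a constant and by a boundary term, the latter coming from integrating $\dot{\bar x}\cdot\dot\varphi$ by parts and using $\ddot{\bar x}=\nabla U_0(\bar x)$.

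The first step is to show that $\mathcal F$ is well defined, weakly lower semicontinuous and coercive on the affine space above. Boundedness of the primaries, $|q_i|\le r_0$, gives $U(x,t) - U_0(x) = O(|x|^{-2})$ for $|x|\ge 2r_0$. Combined with $|\bar x|\sim t^{2/3}$, this makes the term $U(\bar x+\varphi,t)-U_0(\bar x+\varphi)$ integrable whenever $\varphi$ is small relative to $t^{2/3}$. The remaining Kepler part $U_0(\bar x+\varphi)-U_0(\bar x)-\nabla U_0(\bar x)\cdot\varphi$ is handled as in \cite{MV09}. The Hardy inequality $\int |\varphi|^2/t^2 \le 4\int|\dot\varphi|^2$, together with the H\"older bound $|\varphi(t)|\le C t^{1/2}\|\dot\varphi\|_{L^2}$, controls the linear and quadratic terms. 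The singular region where $\bar x+\varphi$ comes near $0$ is harmless because the potential is positive and enters with the favourable sign. Coercivity should follow from $\int \ey|\dot\varphi|^2$ dominating, after a Taylor expansion, the negative quadratic part $-\frac{m}{2}\varphi^T D^2(|\cdot|^{-1})(\bar x)\varphi$. By Hardy this quadratic part is bounded by $\frac{m}{|\bar x|^3}|\varphi|^2\sim \frac{2}{9}|\varphi|^2/t^2$, which is strictly below the Hardy constant $1/4$. Once these estimates are in place, the direct method yields a minimizer $\varphi^*$.

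Next we apply Theorem \ref{thm;Marchal-restricted}. On every compact interval $[t_1,t_2]\subset(t_0,\infty)$, the restriction of $x^* = \bar x+\varphi^*$ is a fixed-end minimizer of $\A(\,\cdot\,;t_1,t_2)$, since compactly supported variations change $\mathcal F$ and $\A$ by the same amount. The primaries are collision-free by hypothesis, so Theorem \ref{thm;Marchal-restricted} shows that $x^*$ is collision-free on $(t_0,\infty)$ and solves \eqref{eq;RNBP1}. Only the endpoint $t_0$ may be a collision, which is permitted by the statement.

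Finally we must establish the asymptotics \eqref{eq;para}. The bound $|\varphi^*(t)| \le C t^{1/2}$ is immediate from the H\"older estimate, since $\dot\varphi^*\in L^2$. This gives $x^*(t)=\al t^{\se}\bs + O(t^{\ey})$, and in particular $x^*$ is parabolic. The main obstacle is the first step. One must verify convergence and coercivity of $\mathcal F$ with the time-dependent, non-autonomous remainder $U-U_0$; the concern is that the $O(|x|^{-2})$ error, integrated along $t^{2/3}$ growth, gives $\int t^{-4/3}\,dt<\infty$, which is fine. One must also rule out minimizing sequences escaping toward small $|\bar x+\varphi|$ on large sets, which we plan to handle by the positivity of $U$. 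The case $t\to-\infty$ follows by time reversal.
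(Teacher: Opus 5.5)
Your overall route coincides with the paper's: a renormalized Lagrangian on $\D_0$ as in \cite{PT24}, Hardy's inequality, the direct method, Theorem~\ref{thm;Marchal-restricted} on compact subintervals, and $|\vph(t)|=O(\sqrt t)$ by Cauchy--Schwarz. However, the coercivity step, which you rightly flag as the crux, does not close as written. The kinetic term carries the factor $\ey$ and Hardy's constant is $4$, so the negative part of the potential terms must be at most $c\int|\vph|^2/t^2$ with $c<\frac18$. Your bound $\frac{m}{|\bar x|^3}|\vph|^2=\frac29|\vph|^2/t^2$ is the two-sided bound on the Hessian form, since it uses the radial eigenvalue $2/|\bar x|^3$ of $D^2|x|^{-1}$. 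With it you get $\ey-4\cdot\frac29<0$. What works is the one-sided bound from the tangential eigenvalue $-1/|\bar x|^3$, namely $-\frac{m}{2|\bar x|^3}|\vph|^2=-\frac19|\vph|^2/t^2$, which leaves the thin margin $\ey-\frac49=\frac1{18}$. Moreover, this bound must hold globally, not only where $|\vph|\ll|\bar x|$. A Taylor expansion with remainder says nothing near the singularity. Positivity of $U$ cannot be used on a half-line either, since dropping $U(\bar x+\vph,t)$ leaves $-U_0(\bar x)\sim -t^{-2/3}$, which is not integrable. The missing ingredient is the exact inequality
\[
\frac{1}{|y+\vph|}\;\ge\;\frac{1}{|y|}-\frac{\langle y,\vph\rangle}{|y|^3}-\frac{|\vph|^2}{2|y|^3}, \qquad y\neq 0,
\]
which is convexity of $s\mapsto s^{-1/2}$ at $s=|y+\vph|^2$. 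This is exactly the paper's $R(u_i,v_i)\ge 0$.

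Second, your split into $U(\bar x+\vph,t)-U_0(\bar x+\vph)$ plus a Kepler part is unsuitable for a lower bound. The first piece is unbounded below where $\bar x+\vph$ approaches the origin. The inequality should instead be applied to each $m_i/|\bar x+\vph-q_i|$ with $y=\bar x-q_i$, which requires the reference path to stay away from the primaries. This is why the paper uses $\xb(t)=\al(t-t_0+1)^{\se}\bs+r_0\bs$: it gives $|\xb-q_i|\ge\al(t-t_0+1)^{\se}$ for all $t\ge t_0$, hence a coefficient of exactly $\frac{1}{9(t-t_0+1)^2}$, matched to the Hardy weight. Your $\bar x=\al t^{\se}\bs$ may run through a primary at early times. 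You would then have to treat some $[t_0,T]$ separately, dropping $U\ge0$ there. You would also have to absorb a factor $1+\epsilon$ on $[T,\infty)$, and another loss from $\vph(t_0)\neq0$ in Hardy. This is feasible only because $\frac19<\frac18$, and it must actually be carried out. Weak lower semicontinuity also needs a real argument, since the integrand is not bounded below pointwise. The paper uses Fatou on $[t_0,T]$, and on $[T,\infty)$ a Taylor expansion with dominated convergence plus an equivalent-norm argument. The remaining steps of your plan are fine.
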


While the above results are about hyperbolic and parabolic solutions with given initial configuration and time, it seems to be even more interesting to consider the existence of \textbf{bi-hyperbolic} or \textbf{bi-parabolic solutions} with given  asymptotic behaviors when time goes to both positive and negative infinity, as they are closely related to the scattering theory in particle systems. So far there are very few results for these solutions in celestial mechanics for either the restricted problem (\cite{PT22}) or the unrestricted problem (\cite{DMMY20} and \cite{Mk26}) . 

With Theorem \ref{thm;Marchal-rest-2}, we can obtain some partial results, when the motion of the primaries satisfies certain symmetries. To explain them, let $V$ be a proper linear subspace of $\R^d$, and let $R_V$ be the reflection map across $V$.
We say a curve $\gamma$ in $\R^d$ is symmetric with respect to $V$ at time $t_0\in\R$, if  
$$\gamma(t) \;=\; R_V \gamma(2t_0 -t), \quad \forall t\in \R.$$
This concept can be naturally extended to curves $\gamma=(\gamma_1,\cdots,\gamma_N)$ in $(\R^d)^N$ by setting 
$$R_V \gamma = (R_V\gamma_1,\cdots,R_V\gamma_N). $$ 

\begin{dfn}
    \label{dfn;V-sym} We say $\gm (t) =(\gm_i(t))_{i =1}^N$ is symmetric with respect to $V$ at time $t_0$, if 
    $$ \gm(t) = R_V \gm(2t_0-t), \quad \forall t \in \rr. $$    
\end{dfn}

\begin{asm} \label{asm;bi-hyp-para}
    $q|_{\rr}$ is bounded, collision-free and symmetric with respect to a one-dim linear subspace $V$ of $\rr^d$ at time $t_0$.
\end{asm}
Under the above assumption $q_i(t_0)$, $i =1, \dots, N$, divide $V$ into $N+1$ connected intervals. We denote the closure of each interval by $\Om_i(V)$, $i =0, 1, \dots, N$ with $\Om_0(V)$ and $\Om_N(V)$ representing the two unbounded intervals.

\begin{thm}
    \label{thm;bi-hyp-para} Under Assumption \ref{asm;bi-hyp-para}, for any asymptotic direction $\bs \in \mathbb{S}$ and any $h>0$, and $\Om_i(V)$, $i = 1, \dots, N-1$, 
    \begin{enumerate}
         \item[(a)] there is at least one bi-hyperbolic solution $x: \rr \to \rr^d$ satisfying $x(t_0) \in \Om_i(V)$, $x(t) = R_V x(2t_0-t)$, for all $t \in \rr$ and the asymptotic expression \eqref{eq;hyp}, as $t \to +\infty$; 
         \item[(b)] there is at least one bi-parabolic solution $x: \rr \to \rr^d$ satisfying $x(t_0) \in \Om_i(V)$, $x(t) = R_V x(2t_0-t)$, for all $t \in \rr$ and the asymptotic expression \eqref{eq;para}, as $t \to +\infty$.
     \end{enumerate} 
     Each of these solutions is either collision-free or regularizable with a single collision at time $t_0$. 
\end{thm}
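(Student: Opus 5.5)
We construct the solution on the half line $[t_0,+\infty)$ as a minimizer with free initial point and then extend it to all of $\rr$ by the symmetry. Take $X_1=\Om_i(V)$ at time $t_0$. It is a segment of $V$ joining two consecutive points $q_{i_1}(t_0),q_{i_2}(t_0)$, which is condition (2) of Definition \ref{dfn;free-boundary}. At $+\infty$ we impose the asymptotics \eqref{eq;hyp}, respectively \eqref{eq;para}. Following \cite{PT24}, as in the proofs of Theorems \ref{thm;hyperbolic} and \cite[parabolic case]{PT24}, we write $x(t)=x_h(t)+\varphi(t)$ with a reference curve $x_h(t)=\sqrt{2h}\,t\bs+\frac{m}{2h}(\ln t)\bs$ in the hyperbolic case, or $x_p(t)=\al t^{\se}\bs$ in the parabolic case. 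We then minimize the renormalized action
$$\A^{\rm ren}(\varphi)=\int_{t_0}^{\infty}\ey|\dot\varphi|^2+U(x_h+\varphi,t)-U_0(x_h,t)-\la\ddot x_h,\varphi\ra\,dt$$
over $\varphi\in D_0^{1,2}$ with $x_h(t_0)+\varphi(t_0)\in \Om_i(V)$. Here $U_0=m/|x|$ is the Kepler potential that $x_h$ nearly solves. Since $q$ is bounded, $U-U_0=O(|x|^{-2})$, which is integrable along the reference curve, so the functional is finite.

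The first step is coercivity and lower semicontinuity, which give a minimizer. Boundedness of the primaries makes the estimates of \cite{PT24} go through verbatim for the time-dependent potential. The only new feature is the free boundary on the compact segment $\Om_i(V)$, and that only helps, since it keeps $\varphi(t_0)$ bounded. For a minimizer we restrict to each compact interval $[t_0,T]$. There $x$ minimizes $\A$ in $B_{t_0,T}(\Om_i(V),\{x(T)\})$, because changing $\varphi$ on $[t_0,T]$ with fixed $x(T)$ changes the renormalized action by exactly the change in $\A$. Theorem \ref{thm;Marchal-rest-2} then applies and gives the following. First, $x$ is collision-free on $(t_0,T)$ for every $T$, so it is a classical solution on $(t_0,\infty)$. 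Second, by (b), either $x(t_0)$ is collision-free with $\dot x(t_0^+)\perp V$, or $x(t_0)=q_{i_k}(t_0)$ and the collision direction converges to $-(q_{i_{3-k}}(t_0)-q_{i_k}(t_0))/|\cdot|$. That vector lies in $V$ and points out of $\Om_i(V)$. The asymptotics \eqref{eq;hyp} and \eqref{eq;para} follow from the finiteness of the renormalized action, as in Theorems \ref{thm;hyperbolic} and \ref{thm;parabolic}.

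Next we extend by setting $x(t)=R_Vx(2t_0-t)$ for $t<t_0$. Because $q$ is symmetric and $R_V$ is an isometry, $U(R_Vy,2t_0-t)=U(y,t)$. So the reflected curve solves \eqref{eq;RNBP1} on $(-\infty,t_0)$, and it has the required asymptotics as $t\to-\infty$ with direction $R_V\bs$.

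In the collision-free case, $x(t_0)\in V$ and $\dot x(t_0^+)\perp V$. Since $R_V$ fixes $V$ pointwise and acts as $-1$ on $V^\perp$, we get $\frac{d}{dt}R_Vx(2t_0-t)|_{t_0}=-R_V\dot x(t_0^+)=\dot x(t_0^+)$. Positions and velocities therefore match at $t_0$, and by uniqueness the glued curve is a $C^2$ solution on $\rr$.

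In the collision case we use Proposition \ref{prop;regularizble}. We need the one-sided limits of the collision direction and of the energy, computed relative to $q_{i_k}$, to exist and agree. The direction limit $u$ from the right is given by Theorem \ref{thm;Marchal-rest-2}(b). From the left it is $\lim R_V u = u$, because $u\in V$ and $R_V$ fixes $V$. The time reversal and the reflection symmetry of $q$ give the matching of the one-sided energies, since the local Kepler energy is invariant under $t\mapsto 2t_0-t$ composed with $R_V$. Hence the solution is regularizable with a single collision at $t_0$.

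The main obstacle is the existence and finiteness step on the half line: showing the renormalized action is bounded below and coercive. This requires controlling the time-dependent perturbation $U-U_0$ and keeping the minimizing sequence away from escaping. I would first handle it by fixing large $T$, minimizing on $[t_0,T]$ with the endpoint $x_h(T)$, and passing $T\to\infty$ using the uniform local bounds of \cite{PT24}.
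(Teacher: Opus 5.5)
\textbf{Gap in the transfer step.} Your route is the paper's: a free initial point on the segment $\Om_i(V)$ (condition (2)), a Polimeni--Terracini renormalized action on $[t_0,\infty)$, Theorem~\ref{thm;Marchal-rest-2}(b) on $[t_0,T]$, then reflection and Proposition~\ref{prop;regularizble}. The gap is the claim that changing $\varphi$ on $[t_0,T]$ with $x(T)$ fixed changes the renormalized action by exactly the change in $\A$. That holds only when both ends are fixed. Take $\psi$ with $\psi=\varphi$ on $[T,\infty)$ but $\psi(t_0)\neq\varphi(t_0)$. Integrating the cross term by parts gives
\[
\int_{t_0}^{T}\langle \dot x_h,\dot\varphi-\dot\psi\rangle+\langle\ddot x_h,\varphi-\psi\rangle\,dt=-\langle \dot x_h(t_0),\varphi(t_0)-\psi(t_0)\rangle .
\]
So $\A(x;t_0,T)-\A(y;t_0,T)$ equals the difference of renormalized actions plus this boundary term, and the term does not vanish when the left end is free.

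This is not just bookkeeping. Vary $\varphi(t_0)$ along $V$ at a collision-free minimizer. Since $\ddot\varphi=\nabla U(x)-\ddot x_h$, the natural boundary condition of your functional is $\dot\varphi(t_0^+)\perp V$, not $\dot x(t_0^+)\perp V$. Hence the $V$-component of $\dot x(t_0^+)$ is $\mathrm{proj}_V\dot x_h(t_0)$, which is nonzero whenever $\bs\not\perp V$. The reflected curve then has a velocity jump $2\,\mathrm{proj}_V\dot x_h(t_0)$ at $t_0$ and is not a solution. Also, Theorem~\ref{thm;Marchal-rest-2} cannot be invoked, since $x$ is not a minimizer of $\A$ in $B_{t_0,T}(\Om_i(V),\{x(T)\})$.

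\textbf{The fix.} Minimize
\[
\A^{\mathrm{ren}}(\varphi)-\langle\dot x_h(t_0),\varphi(t_0)\rangle
\]
instead. Because $\varphi(t_0)$ ranges over a compact set and evaluation at $t_0$ is weakly continuous, coercivity and weak lower semicontinuity are unaffected. With this term your identity becomes exact, and the rest of your argument goes through: alternative (b), $C^1$ gluing from $\dot x(t_0^+)\perp V$, the collision direction $u\in V$ being fixed by $R_V$, and the energy matching from the symmetry of $q$.

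The paper's own proof asserts the same transfer (``by the definition of $\D_i$, $x_i|_{[t_0,t_1]}$ must be a minimizer of $\A$ in $B_{t_0,t_1}(\Om_i(V),x_i(t_1))$''). It does so for $\Ah$, which has no boundary term while $\dot\xb=\sqrt{2h}\,\bs\neq0$, so it needs the same correction.

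\textbf{Two smaller points.}
\begin{enumerate}
\item Your reference curves $\sqrt{2h}\,t\bs+\frac{m}{2h}(\ln t)\bs$ and $\al t^{2/3}\bs$ are undefined, or pass through the origin where $U_0$ or $\ddot x_p$ is non-integrable, when $t_0\le 0$. Use the shifted $\xb$ of the paper.
\item Finiteness of the renormalized action only gives $\varphi=O(\sqrt t)$ via Proposition~\ref{prop;Hardy-ineq}. The $o(\ln t)$ in \eqref{eq;hyp} requires the equation-of-motion argument of Proposition~\ref{prop;NormalMinCollFree-hyper}.
\end{enumerate}
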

\begin{rem}
    From the proof we will see for different $\Om_i(V)$ the corresponding bi-hyperbolic or bi-parabolic solutions are distinct.
\end{rem}

Besides its connection with scattering theory as we already mentioned, the above results are interesting, as the existence of bi-parabolic solutions played a key role in proving oscillatory solutions in celestial mechanics for both the restricted problem 
(see \cite{St60},  \cite{Alk72}, \cite{MG73}, \cite{Moser73}, \cite{LS80b}, \cite{GMS16}, \cite{GMSS17}, \cite{PT22}) and the unrestricted problem (see \cite{LS80a}, \cite{Mk84} and \cite{GMPT22}).

While the above results are all about unbounded solutions, now we will study bounded solutions, in particular periodic and quasi-periodic solutions. Since the restricted problem is time-dependent, we need the motion of the primaries to be periodic or quasi-periodic as well. One way to obtain this is through symmetry and for the next three theorems we shall always assume the motion of the primaries satisfies the following assumption. 

\begin{asm} \label{asm;periodic}
    $q|_{\rr}$ is collision-free, and for $j =1, 2$, there is a proper linear subspace  $V_j$ of $\R^d$ and a moment $t_j$ ($t_1 < t_2$), such that $q|_{\rr}$ is symmetric with respect to $V_j$ at the moment $t_j$. 
\end{asm}

\begin{thm}
     \label{thm;periodic-1} Under Assumption \ref{asm;periodic}, when $V_1 \cap V_2 =\{0\}$, $\A$ has at least one minimizer $x|_{[t_1, t_2]}$ in $B_{t_1, t_2}(V_1, V_2)$, which is collision-free for all $t \in [t_1, t_2]$ with $\dot{x}(t_1^+)$ perpendicular to $V_1$ and $\dot{x}(t_2^-)$ perpendicular to $V_2$. In particular, $x|_{[t_1, t_2]}$ can be extended to a collision-free solution $x|_{\rr}$ of \eqref{eq;RNBP1} through the symmetries with respect to $V_1$ and $V_2$. 
 \end{thm}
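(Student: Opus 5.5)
The proof has three steps: existence of a minimizer by the direct method, regularity and boundary behaviour from Theorem \ref{thm;Marchal-rest-2}(a), and extension by the two symmetries.

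\textbf{Step 1: existence of a minimizer.} We show that $\A$ is coercive on $B_{t_1, t_2}(V_1, V_2)$. The hypothesis $V_1 \cap V_2 = \{0\}$ gives a constant $c>0$ with
$$|a-b| \ge c(|a|+|b|), \quad \forall a \in V_1,\ b \in V_2.$$
This follows by compactness, since the function $(a,b)\mapsto |a-b|$ does not vanish on the compact set $\{(a,b) \in V_1 \times V_2 : |a|+|b|=1\}$. Now take $x \in B_{t_1, t_2}(V_1, V_2)$. By Cauchy--Schwarz,
$$|x(t_2)-x(t_1)|^2 \le (t_2-t_1)\int_{t_1}^{t_2}|\dot x|^2\,dt,$$
so the kinetic term controls $|x(t_1)|+|x(t_2)|$. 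Hence it controls $\|x\|_{L^\infty}$, and therefore the full $H^1$ norm. Since $U\ge 0$, every minimizing sequence is bounded in $H^1$. The set $B_{t_1, t_2}(V_1, V_2)$ is weakly closed, because evaluation at $t_j$ is continuous under weak $H^1$ convergence (via uniform convergence) and $V_j$ is closed. The kinetic term is weakly lower semicontinuous, and the potential term is lower semicontinuous by Fatou's lemma, since $U$ is nonnegative and lower semicontinuous with values in $[0,\infty]$. So a minimizer $x$ exists. Its action is finite, because straight paths between generic points of $V_1$ and $V_2$ that avoid the primaries have finite action.

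\textbf{Step 2: apply the free-boundary Marchal lemma.} A global minimizer in $B_{t_1, t_2}(V_1, V_2)$ is in particular a local minimizer there. Each $V_j$ is a proper linear subspace. If $\dim V_j \ge 1$ it falls under condition (1) of Definition \ref{dfn;free-boundary}; if $V_j=\{0\}$ it falls under condition (4). Suppose first that both $V_j$ have dimension at least one. Then Theorem \ref{thm;Marchal-rest-2}(a) gives directly that $x$ is collision-free on $[t_1,t_2]$, with $\dot x(t_1^+) \perp V_1$ and $\dot x(t_2^-) \perp V_2$.

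The degenerate case $V_j = \{0\}$ needs a separate argument. Here the perpendicularity condition is vacuous, but a collision at the endpoint is not excluded by the theorem. I expect this to be the main obstacle. I would first try to exclude it using the symmetry: $q_i(t_j) = R_{V_j}q_i(t_j)$ forces $q_i(t_j) \in V_j = \{0\}$ for every $i$, which contradicts the collision-freeness of $q$ unless $N=1$. For $N=1$, I would fall back on a separate local deformation at the endpoint, or simply assume $\dim V_j\ge 1$, as in the intended setting. The same argument shows, in general, that all $q_i(t_j)$ lie in $V_j$ and that $R_{V_j}$ preserves $U(\cdot, t_j)$.

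\textbf{Step 3: extend through the symmetries.} Define $x$ on $[2t_1-t_2,\, t_1]$ by $x(t)=R_{V_1}x(2t_1-t)$. The potential satisfies $U(R_{V_1}y,\, 2t_1-t)=U(y,t)$, because $q$ is $V_1$-symmetric at $t_1$ and $R_{V_1}$ is an isometry. Hence the reflected curve solves \eqref{eq;RNBP1} on $[2t_1-t_2,\, t_1]$.

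The two pieces join in a $C^1$ way at $t_1$. Positions agree because $x(t_1) \in V_1$ is fixed by $R_{V_1}$. For velocities, the reflected piece has velocity $-R_{V_1}\dot x(t_1^+)$ at $t_1^-$. Since $\dot x(t_1^+) \perp V_1$, we have $R_{V_1}\dot x(t_1^+) = -\dot x(t_1^+)$, so this velocity equals $\dot x(t_1^+)$. At the collision-free point $x(t_1)$ the glued curve is therefore a $C^1$ solution, and by uniqueness for the ODE it is a classical solution across $t_1$. The same gluing works at $t_2$ with $R_{V_2}$.

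Alternating the reflections produces a curve on all of $\rr$. This works because $q$ is invariant under the composition of the two reflection symmetries, which is the time shift by $2(t_2-t_1)$ combined with $R_{V_2}R_{V_1}$. Consequently every translate of the fundamental piece is again a collision-free solution, and $x|_{\rr}$ is a collision-free solution of \eqref{eq;RNBP1}.
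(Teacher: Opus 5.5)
Your proposal is correct and follows the paper's proof. The steps are the same: coercivity from the angle inequality between $V_1$ and $V_2$ (your $|a-b|\ge c(|a|+|b|)$ implies the paper's \eqref{eq;trig}), the direct method, Theorem \ref{thm;Marchal-rest-2}(a) for collision-freeness and perpendicularity, and extension by the two reflections; your gluing details and the $V_j=\{0\}$ discussion only make explicit what the paper leaves tacit. One caveat: when $N=1$ and $V_j=\{0\}$, the endpoint collision $x(t_j)=0=q_1(t_j)$ is forced, so no local deformation can remove it, and you do need $\dim V_j\ge 1$, which the paper implicitly assumes when it invokes condition (1).
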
 

While the solution obtained in the above theorem is always collision-free, it only gives us one such solution. Meanwhile when at least one of the dimensions of $V_1$ and $V_2$ is one, we are able to find more bounded solutions but with the possibility of containing regularizable collisions. 

 \begin{thm}
     \label{thm;perodic-2} Under Assumption \ref{asm;periodic}, when $\text{dim}(V_1) = 1$, for any $i = 0, 1, \dots, N$, if $\Om_i(V_1) \cap V_2$ is empty or bounded in $\rr^d$, then $\A$ has at least one minimizer $x|_{[t_1, t_2]}$ in $B_{t_1, t_2}(\Om_i(V_1), V_2)$, which is collision-free, for all $t \in (t_1, t_2]$, with $\dot{x}(t_2^-)$ perpendicular to $V_2$. 

     Meanwhile $x(t_1)$ is either collision-free with $\dot{x}(t_1^+)$ perpendicular to $\Om_i(V_1)$, or it collides with one of the boundaries of $\Om_i(V_1)$ (when $i =0$ or $N$, the boundary of $\Om_i(V_1)$ contains only one point). In particular, $x|_{[t_1, t_2]}$ can be extended to a collision-free or regularizable solution $x|_{\rr}$ of \eqref{eq;RNBP1} through the symmetries with respect to $V_1$ and $V_2$. 
 \end{thm}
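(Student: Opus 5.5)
The argument has three parts: existence of a minimizer by the direct method, regularity and boundary behaviour from Theorem \ref{thm;Marchal-rest-2}, and extension by the two reflections.

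\textbf{Existence.} Set $X_1=\Om_i(V_1)$ and $X_2=V_2$. The set $X_1$ is a closed segment of $V_1$ joining $q_{i}(t_1)$ and $q_{i+1}(t_1)$ when $1\le i\le N-1$. When $i=0$ or $N$, it is a closed ray of $V_1$ issuing from an extreme point $q_k(t_1)$. These are cases (2) and (3) of Definition \ref{dfn;free-boundary}, and $V_2$ is case (1). Hence $B_{t_1,t_2}(\Om_i(V_1),V_2)$ is a weakly closed subset of $H^1_{t_1,t_2}(\rr^d)$.

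Since $U>0$ and $U$ is weakly lower semicontinuous (Fatou), $\A$ is weakly lower semicontinuous. The main task is coercivity, and this is where the hypothesis on $\Om_i(V_1)\cap V_2$ enters. For a sequence $x_n$ in the admissible set, the kinetic term bounds $\|\dot x_n\|_{L^2}$, and hence $|x_n(t_2)-x_n(t_1)|$. The mean value is uncontrolled only if both endpoints run off to infinity. That would give $x_n(t_1)\in\Om_i(V_1)$ and $x_n(t_2)\in V_2$ with $|x_n(t_1)|\to\infty$ and $|x_n(t_1)-x_n(t_2)|$ bounded. If $\Om_i(V_1)$ is bounded this is impossible, so only the rays $i=0$ or $N$ need discussion.

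\begin{itemize}
\item If $\Om_i(V_1)\cap V_2=\emptyset$, then the ray direction $\bs$ of $\Om_i(V_1)$ is not in $V_2$: otherwise translating a point far along the ray would intersect $V_2$ in an unbounded set. So $\mathrm{dist}(x,V_2)\ge c|x|-C$ on $\Om_i(V_1)$.
\item If $\Om_i(V_1)\cap V_2$ is nonempty and bounded, the ray is not contained in the subspace $V_2$, so the same estimate holds.
\end{itemize}

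In both cases $|x_n(t_1)|$ stays bounded. Thus $\A$ is coercive and a minimizer $x$ exists. One also checks that $\A(x)<\infty$.

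\textbf{Collisions and boundary behaviour.} A global minimizer is in particular a local minimizer in $B_{t_1,t_2}(X_1,X_2)$, so Theorem \ref{thm;Marchal-rest-2} applies. Part (a) for $X_2=V_2$ gives: no collision on $(t_1,t_2]$, and $\dot x(t_2^-)\perp V_2$. Parts (b) or (d) for $X_1$ give one of two outcomes:
\begin{itemize}
\item $x(t_1)$ is collision-free with $\dot x(t_1^+)\perp \Om_i(V_1)$, i.e.\ perpendicular to $V_1$. If $x(t_1)$ is an interior point this is the first-variation condition. If $x(t_1)$ is a non-collision endpoint of a ray, the endpoint is some $q_k(t_1)$ and would itself be a collision, so that case is excluded.
\item $x(t_1)=q_k(t_1)$ for an endpoint $q_k(t_1)$ of $\Om_i(V_1)$. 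In this case the collision direction satisfies $\lim_{t\to t_1^+}(x-q_k)/|x-q_k|=-\bs_0$, where $\bs_0\in V_1$ points along $\Om_i(V_1)$ away from $q_k(t_1)$.
\end{itemize}

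\textbf{Extension by symmetry.} Define $x(t)=R_{V_2}x(2t_2-t)$ on $[t_2,2t_2-t_1]$, and $x(t)=R_{V_1}x(2t_1-t)$ on the other side. Iterating, $x$ becomes symmetric with respect to $V_1$ at $t_1$ and with respect to $V_2$ at $t_2$. Because $q$ has the same symmetries, $U(R_Vx,2t_j-t)=U(x,t)$, so each reflected piece solves \eqref{eq;RNBP1}.

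\begin{itemize}
\item At $t_2$, and at $t_1$ in the collision-free case, continuity holds because the endpoint lies in $V_j$. The velocities match because $R_{V_j}$ fixes the component in $V_j$ and flips the normal one: the reflected velocity is $-R_{V_j}(-\dot x)$, which equals $\dot x$ exactly when $\dot x\perp V_j$. Uniqueness of ODE solutions then glues the pieces.
\item In the collision case at $t_1$, the body $q_k$ lies on $V_1$ at time $t_1$. The reflected path approaches $q_k$ for $t\to t_1^-$ from direction $R_{V_1}(-\bs_0)=-\bs_0$, matching the right limit. The energy of the relative Kepler-type motion at the collision also matches, because reflection is an isometry of the Lagrangian. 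Proposition \ref{prop;regularizble} then gives regularizability.
\end{itemize}

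Repeating the reflections yields $x|_{\rr}$. The main obstacle is the coercivity step for the unbounded rays, specifically turning the intersection hypothesis into the linear distance estimate above. A secondary point is checking the matching of one-sided energies required by Proposition \ref{prop;regularizble}.
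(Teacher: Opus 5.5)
Your proposal is correct and follows essentially the same route as the paper. Coercivity comes from separating bounded segments from rays. In the ray case the intersection hypothesis forces $V_1\not\subset V_2$, hence $V_1\cap V_2=\{0\}$ since $\dim V_1=1$, which is exactly why the paper can reuse \eqref{eq;trig}. The boundary behaviour then comes from parts (a), (b), (d) of Theorem \ref{thm;Marchal-rest-2}, and the extension from reflection across $V_1$, $V_2$ together with Proposition \ref{prop;regularizble}.

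One cosmetic slip: the reflected piece $R_{V_j}x(2t_j-t)$ has velocity $-R_{V_j}\dot x(2t_j-t)$, not $-R_{V_j}(-\dot x)$. At $t=t_j$ this matches $\dot x$ exactly when $\dot x\perp V_j$, which is the conclusion you state.
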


    \begin{rem}
        Notice that there is a similar result, when $\text{dim}(V_2)=1$. 
    \end{rem}

    When the dimensions of $V_1$ and $V_2$ are both one, we can obtain even more bounded solutions, but with the possibility of containing more regularizable collisions. 

\begin{thm}
\label{thm;periodic-3} Under Assumption \ref{asm;periodic}, when $\text{dim}(V_1) = \text{dim}(V_2)=1$, for any $0 \le i \ne j \le N$, if $ \Om_i(V_1) \cap \Om_j(V_2)$ is empty or bounded in $\rr^d$, then $\A$ has at least one minimizer $x|_{[t_1, t_2]}$ in $B_{t_1, t_2}(\Om_i(V_1), \Om_j(V_2))$, which is collision-free, for all $t \in (t_1, t_2)$. 

Meanwhile $x(t_1)$ is either collision-free with $\dot{x}(t_1^+)$ perpendicular to $\Om_i(V_1)$ or it collides with one of the boundaries of $\Om_i(V_1)$, and $x(t_2)$ is either collision-free with $\dot{x}(t_2^-)$ perpendicular to $\Om_j(V_2)$ or it collides with one of the boundaries of $\Om_j(V_2)$. Moreover $x|_{[t_1, t_2]}$ can always be extended to a collision-free or regularizable solution $x|_{\rr}$ of \eqref{eq;RNBP1} through the symmetries with respect to $V_1$ and $V_2$.  
\end{thm}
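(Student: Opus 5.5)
The plan is to follow the scheme of Theorems \ref{thm;periodic-1} and \ref{thm;perodic-2}. First I will show a minimizer exists, then apply Theorem \ref{thm;Marchal-rest-2} to control collisions, and finally extend the path by reflections.

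\emph{Existence.} Set $X_1=\Om_i(V_1)$ and $X_2=\Om_j(V_2)$. Each is a closed set of type (2), (3) or (4) in Definition \ref{dfn;free-boundary}: a bounded $\Om$ is a segment between two primaries at the symmetry moment, and an unbounded one is a ray from a primary. Hence $B_{t_1,t_2}(X_1,X_2)$ is weakly closed in $H^1$, and $\A$ is weakly lower semicontinuous there by Fatou's lemma, since $U\ge 0$. The main work is coercivity. If $|x(t_1)|\to\infty$ along a minimizing sequence while the action stays bounded, then the kinetic term bounds $|x(t)-x(t_1)|\le C\sqrt{t_2-t_1}$. So $x(t_2)$ also escapes to infinity, and both endpoints go to infinity in $X_1$ and $X_2$ within distance $C$ of each other. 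This is impossible when $X_1\cap X_2$ is empty or bounded. If both sets are unbounded rays, their directions are then distinct or antiparallel, so far points stay far apart. I will make this quantitative: $\mathrm{dist}(p,X_2)\to\infty$ as $p\to\infty$ in $X_1$, which holds because the asymptotic directions differ. This gives boundedness in $H^1$ and hence a minimizer $x$.

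\emph{Collisions.} Theorem \ref{thm;Marchal-rest-2} applies directly, since $q$ is collision-free on $[t_1,t_2]$. It gives that $x$ is collision-free on $(t_1,t_2)$. At $t_1$, parts (b)/(d) give one of two cases:
\begin{itemize}
\item[$\bullet$] $x(t_1)$ is collision-free with $\dot x(t_1^+)\perp \Om_i(V_1)$, hence perpendicular to $V_1$, since $\Om_i(V_1)$ spans the line $V_1$;
\item[$\bullet$] $x(t_1)=q_k(t_1)$ for an endpoint $q_k(t_1)$ of $\Om_i(V_1)$, with the one-sided collision direction equal to $-$(the unit vector pointing into $\Om_i(V_1)$).
\end{itemize}
Parts (c)/(e) give the same alternatives at $t_2$. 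For type (4) (a single point) the statement is trivial. That case cannot actually arise, because $\dim V_j=1$ and $N\ge 1$ make each $\Om$ at least a ray.

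\emph{Extension.} Define $x$ on $[2t_1-t_2, t_1]$ by $x(t)=R_{V_1}x(2t_1-t)$. Then reflect across $V_2$ at $t_2$, and continue by alternating reflections. By Assumption \ref{asm;periodic}, $U(R_Vx,2t_0-t)=U(x,t)$, so the reflected pieces solve \eqref{eq;RNBP1}. At a collision-free endpoint, the perpendicularity of $\dot x$ makes the glued path $C^1$, so it is a solution. At a collision endpoint $x(t_1)=q_k(t_1)\in V_1$, the incoming direction after reflection is the $R_{V_1}$-image of the outgoing direction of the time-reversed arc. Since the limiting direction lies in $V_1$, $R_{V_1}$ fixes it, and both one-sided limits of $(x-q_k)/|x-q_k|$ coincide. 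The energy relative to the colliding primary also has matching one-sided limits by the symmetry. Proposition \ref{prop;regularizble} then makes the collision regularizable.

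I expect two steps to be delicate. The first is coercivity when both $\Om$ are rays. The second is checking the hypotheses of Proposition \ref{prop;regularizble}: existence and agreement of the energy limits on both sides, which follow from the reflection symmetry together with the Sundman-type asymptotics behind Theorem \ref{thm;Marchal-rest-2}.
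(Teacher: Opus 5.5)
Your proposal is correct and follows essentially the same route as the paper: coercivity as in Theorem \ref{thm;perodic-2}, then Theorem \ref{thm;Marchal-rest-2} (b)--(e) for the interior and the two endpoints, then extension by the reflections, with Proposition \ref{prop;regularizble} giving regularizability at any endpoint collision. You are more explicit than the paper in two places. First, the observation that $\mathrm{dist}(p,\Om_j(V_2))\to\infty$ as $p\to\infty$ in $\Om_i(V_1)$ also covers the case $V_1=V_2$ with two opposite rays, where \eqref{eq;trig} cannot be used verbatim. Second, the symmetry $E(t)=E(2t_1-t)$ of the relative energy $E(t)=\frac12|\dot x-\dot q_k|^2-U(x,t)$ makes the energy condition of Proposition \ref{prop;regularizble} explicit; the existence of the one-sided limit comes from $\dot E$ being $O(|t-t_1|^{-1/3})$, hence integrable.
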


Notice that a solution of the $N$-body problem satisfying \textbf{Assumption \ref{asm;periodic}} must satisfy \textbf{Assumption \ref{asm;bi-hyp-para}} as well. In particular, every homographic periodic solution generated by a collinear Euler-Moulton configuration satisfies \textbf{Assumption \ref{asm;periodic}}. There are also many other periodic and quasi-periodic solutions in the $N$-body problem satisfying \textbf{Assumption \ref{asm;bi-hyp-para}} or \textbf{\ref{asm;periodic}} found through action minimization methods (see \cite{Ch08} and \cite{ChenLin09}). A list of examples will be given in Section \ref{sec;bounded}.




\section{A Marchal-type lemma for fixed-ends problems} \label{sec:local_minimizers_are_collision_free}


The Marchal-type lemma (Theorem~\ref{thm;Marchal-restricted}) for the restricted problem is highly nontrivial, 
as its argument about isolation of collision singularities differs significantly from the non-restricted version.
Before proceeding with the proof, we point out  some essential ingredients  of the proof for the $N$-body  problem (see \cite{BFT08,C02})
that are not directly applicable to the restricted problem. 

Following the approach for the $N$-body problem, the standard argument to show $x$ is collision-free is by contradiction.
Assume $x$ is not collision-free. The first step is to show it must contain an isolated collision. 
The second step is using Sundman's estimates to show $x$ cannot be a fixed-end local minimizer near an isolated collision. 

The proof of the first step relies on the Lagrange-Jacobi identity and the continuity of the partial energy of the system involved in collision. 
The continuity of partial energy is obtained through a time reparametrization of the critical path. 
While this works perfect for the time independent $N$-body problem, or even a time-dependent system in which case the singular set should be time-independent (see \cite{BFT08}), it does not apply to the restricted problem.

Here we pinpoint a few technical issues that result in failure of the method for the non-restricted problem:

\begin{enumerate}
\item For the restricted problem, the singular set is a time-varying subset in the configuration space. 
At each moment $t$, it consists of positions of primaries at time $t$.  
The primaries could be very close or far apart, we cannot assume the potential energy $U(x,t)$ in  (\ref{eq;Uzt}) 
tends to infinity uniformly in $t$ 
as the zero mass $x$ approaches the singular set.  \medskip

\item The energy of the system does not necessarily belong to the Sobolev space $W^{1,1}$ in some
small neighborhood of a collision moment. More precisely, let $x$ be an action minimizer and let 
$$
  h(t) \;=\; \frac{1}{2}|\dot{x}(t)|^2 - U(x(t),t)
$$
be its energy.  One may verify 
$$
  \dot{h}(t) \;=\; -\frac{\partial}{\partial t}U(x,t) \;=\;  -\sum_{i=1}^N \frac{m_i (x(t)-q_i(t))\cdot \dot{q}_i(t)}{|x(t)-q_i(t)|^3}
$$
in some weak sense, and it is not necessarily integrable. 
\medskip

\item  Isolation of collision moments for the $N$-body problem relies on positivity of the second derivative of the moment of inertia, 
and application of some Lagrange-Jacobi type inequality. 
If one follows the same idea for the restricted problem, the moment of inertia $I=|x|^2$ is expected to locally satisfy an inequality of the form
$$
   \frac{1}{2} \ddot{I} \; \ge \; 2 h(t) + (1-\epsilon) U(x,t) + C. 
$$
Here $\epsilon\ge 0$ is small and $C$ is some constant. 
As observed above, $h$ could be negative and unbounded, we can't conclude positivity of $\ddot{I}$ here even if $U$ tends to positive infinity. 
\end{enumerate}

In this section we demonstrate isolation of collision through a different argument, then use it to prove Theorem~\ref{thm;Marchal-restricted}. 
Throughout this section we only assume $q(t)$ is a collision-free solution of the $N$-body problem, and it does not need to be bounded. 


\begin{dfn}
\label{dfn;isolated-coll} We say $x(t_0)$ is \textbf{an isolated collision} and $t_0$ is \textbf{an isolated collision moment}, if there is a $\dl>0$ and $1 \le i_0 \le N$, such that 
$$ x(t_0) = q_{i_0}(t_0) \; \text{ and } \; x(t) \ne q_i(t), \; \forall 1 \le i \le N \text{ and } t \in [t_0-\dl, t_0+\dl] \setminus \{t_0\}. $$
In addition, if $x(t)$ satisfies \eqref{eq;RNBP1}, for all $t \in [t_0-\dl, t_0+\dl] \setminus \{t_0\}$, we say $x|_{[t_0 -\dl, t_0 +\dl]}$ is \textbf{a collision-ejection solution} of the restricted problem. 
\end{dfn}

The following result is known as \emph{Sundman's estimates}. 
\begin{prop}
    \label{prop;Sundman} If $x|_{[t_0-\dl, t_0+\dl]}$ is a collision-ejection solution of \eqref{eq;RNBP1} with $x(t_0) = q_{i_0}(t_0)$ being the isolated collision, there is an $\bs^+ \in \mathbb{S}$, such that when $t \to t_0^+$
   $$ \begin{aligned}
x(t) - q_{i_0}(t) & =  \left(\frac{9}{2}m_{i_0} \right)^{\frac{1}{3}}(t-t_0)^{\frac{2}{3}} \bs^+ + o\left(|t-t_0|^{\se} \right); \\
\dot{x}(t) - \dot{q}_{i_0}(t) & = \se \left(\frac{9}{2}m_{i_0} \right)^{\frac{1}{3}}(t-t_0)^{-\sy} \bs^+ + o\left(|t-t_0|^{-\frac{1}{3}} \right) .
\end{aligned}$$
There is an $\bs^- \in \mathbb S$, such that similar estimates hold when $t \to t_0^-$. 
\end{prop}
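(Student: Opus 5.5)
We prove Sundman's estimates in relative coordinates centred at the colliding primary. Set $z(t)=x(t)-q_{i_0}(t)$ for $t\in(t_0,t_0+\dl]$. By \eqref{eq;RNBP1} and the equation \eqref{eq;Nbody} for $q_{i_0}$,
$$\ddot z = -\frac{m_{i_0} z}{|z|^3} + F(t), \qquad F(t)= -\sum_{i\ne i_0}\frac{m_i(x-q_i)}{|x-q_i|^3} - \ddot q_{i_0}(t).$$
Since $q$ is collision-free, $F$ is bounded and smooth near $t_0$, and $|x-q_i|$ stays bounded away from $0$ for $i\ne i_0$. So $z$ solves a Kepler problem with bounded forcing, and $z(t)\to 0$ as $t\to t_0^+$. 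The plan is to reproduce the classical two-body Sundman argument for this perturbed Kepler problem, using that the perturbation is bounded.

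\textbf{Step 1: energy and Lagrange--Jacobi.} Define the Kepler energy $E(t)=\frac12|\dot z|^2-m_{i_0}/|z|$. Then $\dot E=\dot z\cdot F$, so $|\dot E|\le C|\dot z|$. With $I=|z|^2$ we have
$$\tfrac12\ddot I = |\dot z|^2 - \frac{m_{i_0}}{|z|} + z\cdot F = 2E + \frac{m_{i_0}}{|z|} + O(|z|).$$
This is the point where the restricted problem behaves well: the collision is already isolated by hypothesis, and the perturbation is bounded rather than singular.

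\textbf{Step 2: boundedness of $E$ (main obstacle).} The aim is to show $E$ has a finite limit, i.e.\ $\int |\dot z|\,dt<\infty$ near $t_0$. First, $\ddot I>0$ near $t_0$ as long as $E$ is bounded below. To get this, combine the estimate
$$|E(t)-E(s)|\le C\int_s^t|\dot z| \le C\int_s^t\sqrt{2|E|+2m_{i_0}/|z|}\,d\tau$$
with a Gronwall argument. Then use the sign information from $\tfrac12\ddot I\ge 2E+m_{i_0}/|z|-C$: $I$ is convex near $t_0$ and $I\to0$, so $\dot I$ has a limit and $I$ is eventually monotone. It follows that $|z|$ is monotone, hence $\int|\dot z|\,dt$ is controlled by the radial part plus an angular part estimated through the angular momentum $c=z\wedge\dot z$. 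Since $\dot c=z\wedge F=O(|z|)$, $c$ has a limit, and the limit must be $0$ because $|z|\to0$ forces $|c|^2/|z|^2\le|\dot z|^2$ with finite Kepler energy budget. Together these give $\int|\dot z|<\infty$, so $E\to E_0$ is finite.

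\textbf{Step 3: asymptotics.} Once $E$ is bounded, $\tfrac12\ddot I\sim m_{i_0}/|z|$. Integrating as in the classical two-body case gives $|z|\sim(\frac92 m_{i_0})^{1/3}(t-t_0)^{2/3}$ and $\dot{|z|}\sim\frac23(\frac92m_{i_0})^{1/3}(t-t_0)^{-1/3}$. Existence of the limit direction $\bs^+=\lim z/|z|$ follows from
$$\frac{d}{dt}\frac{z}{|z|}=O\!\left(\frac{|\dot z_\perp|}{|z|}\right).$$
By the energy identity, $|\dot z_\perp|^2=|\dot z|^2-\dot{|z|}^2=2E+2m_{i_0}/|z|-\dot{|z|}^2=o(|z|^{-1})$. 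To show $|\dot z_\perp|/|z|$ is integrable, use Sundman's trick that $\frac{d}{dt}\bigl(|z|^{1/2}(\ldots)\bigr)$ has a sign, or equivalently use McGehee coordinates, in which the angular velocity decays like $(t-t_0)^{-1+\eta}$.

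\textbf{Step 4.} The velocity estimate follows from the previous two steps, and the case $t\to t_0^-$ is symmetric. I expect Step 2 — controlling the non-conserved energy $E$ — to be the delicate part. It is the analogue of the issue in point (2) of the list above, but it is manageable here because the forcing $F$ is bounded once collision isolation is given.
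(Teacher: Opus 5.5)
Your reduction to a Kepler problem with bounded forcing $F$ is the same as the paper's; the paper then simply cites Sperling \cite{Sp69}. Your self-contained Step 2, however, does not close.

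The Gronwall inequality $|E(t)-E(s)|\le C\int_s^t\sqrt{2|E|+2m_{i_0}/|z|}\,d\tau$ contains the forcing term $\int_{t_0}|z|^{-1/2}\,d\tau$. Nothing you have at that stage says this is finite: a priori you only know $z(t)\to 0$, with no rate and no bound on the arc length $\int|\dot z|\,dt$. Heuristically, you must rule out $z$ spiralling in along ever smaller osculating ellipses with $E\to-\infty$, and that is exactly what needs proof. So ``$E$ bounded below'' is unproved. With it go the convexity of $I$, the monotonicity of $|z|$, and the arc-length bound, all of which you derive from it. Your justification of $c\to 0$ via a ``finite Kepler energy budget'' has the same circularity.

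The missing idea, available in the restricted problem, is that $F$ is a $z$-gradient: $F=\nabla_z W(t,z)$ with $W(t,z)=\sum_{i\ne i_0} m_i|z+q_{i_0}(t)-q_i(t)|^{-1}-\langle z,\ddot q_{i_0}(t)\rangle$. Hence $\frac{d}{dt}\bigl(E-W(t,z(t))\bigr)=-\partial_t W(t,z(t))$. This is bounded near $t_0$, because $|x-q_i|$ stays away from $0$ for $i\ne i_0$ and $q$ is a smooth collision-free solution, so $\dddot q_{i_0}$ is bounded. Thus $E$ has a finite limit with no a priori rate needed. (In the paper's applications, $x\in H^1$ also gives $\int|\dot z|\,dt<\infty$ directly by Cauchy--Schwarz.)

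After that your outline works, and it simplifies:
\begin{itemize}
\item $\tfrac12\ddot I\ge m_{i_0}/|z|-C>0$, so $|z|$ is increasing.
\item $|c|^2\le |z|^2|\dot z|^2=2E|z|^2+2m_{i_0}|z|\to 0$.
\item $|\dot c|\le C|z|$ then gives $|c(t)|\le C(t-t_0)|z(t)|$.
\item Hence $\bigl(\frac{d}{dt}|z|\bigr)^2=2m_{i_0}/|z|+O(1)$, which integrates to the stated rates.
\item Finally $\bigl|\frac{d}{dt}(z/|z|)\bigr|=|c|/|z|^2=O((t-t_0)^{1/3})$ is integrable, so $\bs^+$ exists without Sundman's trick or McGehee coordinates.
\end{itemize}
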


\begin{proof}
Let $\bfr=x-q_{i_0}$. Then the equation of motion for the massless particle is 
\begin{equation}
\label{eq;r} \ddot{\bfr} = \ddot{x} - \ddot{q}_{i_0} = - \frac{m_{i_0}\, \bfr}{|\bfr|^3} - \ddot{q}_{i_0}  - \sum_{i \ne i_0} \frac{m_i(\bfr + q_{i_0} -q_i)}{|\bfr + q_{i_0} -q_i|^3}.
\end{equation}
Notice that there is a constant $C_1$, such that for all $t \in [t_0 -\dl, t_0+\dl]$, 
$$ \Big|\ddot{q}_{i_0}(t)+\sum_{i \ne i_0} \frac{m_i(\bfr(t) +q_{i_0}(t) - q_i(t))}{|\bfr(t) +q_{i_0}(t) - q_i(t)|^3} \Big| \le C_1.$$
This means $\bfr(t)$ is a collision-ejection solution of the Kepler problem with bounded perturbation. Then the desired results follow from (7.11) and (7.12) in \cite{Sp69} by Sperling. 

\end{proof}

The next result is the key to obtain an isolated collision in our proof of Theorem \ref{thm;Marchal-restricted}.
\begin{prop} \label{prop;isolated-coll}
If $x \in H^1_{t_0-\dl, t_0+\dl}(\rr^d)$ is a fixed-end local minimizer of $\A$ with $t_0$ being a collision moment, then $t_0$ is either isolated or accumulated by collision moments from both sides.  
\end{prop}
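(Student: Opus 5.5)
The plan is to argue by contradiction and reduce to Marchal's lemma (Theorem~\ref{thm;Marchal}) for the Kepler problem through a blow-up at $t_0$. Suppose $t_0$ is neither isolated nor accumulated from both sides. By time reversal we may assume there are collision moments $t_n \uparrow t_0$, while $x(t)\neq q_i(t)$ for all $i$ and all $t\in(t_0,t_0+\dl']$, for some $\dl'>0$. By continuity of $x$ and $q$, and since $q$ is collision-free, every collision near $t_0$ is with the same primary $q_{i_0}$, where $x(t_0)=q_{i_0}(t_0)$; the other primaries stay at a positive distance.

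Set $\bfr=x-q_{i_0}$. As in the proof of Proposition~\ref{prop;Sundman}, $\bfr$ solves the Kepler equation \eqref{eq;r} with a perturbation bounded by $C_1$ on the collision-free set. Moreover, $\bfr$ locally minimizes the perturbed Kepler action
$$\int \ey|\dot{\bfr}|^2+\frac{m_{i_0}}{|\bfr|}+\dot{\bfr}\cdot\dot q_{i_0}+W(\bfr,t)\,dt,$$
where $W$ is smooth near $\bfr=0$. The cross term is a null Lagrangian up to a bounded term after integrating by parts.

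The first step is an a priori bound near $t_0$. On the right, the one-sided Sperling estimates still apply, since they only use the equation on $(t_0,t_0+\dl']$ and the bounded perturbation. This gives $|\bfr(t)|\sim c(t-t_0)^{2/3}$ and $|\dot{\bfr}|\sim c'(t-t_0)^{-1/3}$. On the left, for $t\in[t_n,t_0]$, we compare the action on $[t_n,t_0]$ with that of an explicit competitor joining $\bfr(t_n)=0$ to $\bfr(t_0)=0$, for instance staying at $0$ after a small modification. This is admissible for fixed-end minimization once the intervals are short, because local minimality in $H^1$ becomes global minimality for fixed ends on short enough intervals. The comparison should give
$$\int_{t_n}^{t_0}\ey|\dot{\bfr}|^2+\frac{m_{i_0}}{|\bfr|}\,dt\le C(t_0-t_n)^{1/3},$$
and hence $|\bfr(t)|\le C|t-t_0|^{2/3}$ there. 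This is exactly the scaling needed for a blow-up.

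Next, put $\lambda_n=t_0-t_n$ and $\bfr_n(s)=\lambda_n^{-2/3}\bfr(t_0+\lambda_n s)$ for $s\in[-2,1]$. The scaled Kepler part of the action is invariant, while the perturbation terms carry a positive power of $\lambda_n$ and vanish in the limit. The bounds above give uniform $H^1$ bounds, so a subsequence converges weakly in $H^1$ and uniformly to some $\bfr_\infty$. A standard $\Gamma$-convergence argument then shows that $\bfr_\infty$ is a fixed-end minimizer of the pure Kepler action on $[-2,1]$; competitors are transplanted back by rescaling and gluing near the endpoints. By uniform convergence, $\bfr_\infty(0)=0$ and $\bfr_\infty(-1)=0$, so there is an interior collision. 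The Kepler problem with fixed center is the $2$-body problem in relative coordinates, so this contradicts Theorem~\ref{thm;Marchal}. Hence one-sided accumulation is impossible.

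The main obstacle is the uniform left-side estimate $|\bfr(t)|\le C|t-t_0|^{2/3}$ together with the action bound. Without an isolated collision, Sundman's estimates are unavailable there. One must check carefully that the short-interval comparison gives the $\lambda^{1/3}$ action scaling uniformly, and that the time-dependent cross term $\dot{\bfr}\cdot\dot q_{i_0}$ does not spoil it. To handle this, I would first integrate the cross term by parts, giving $-\bfr\cdot\ddot q_{i_0}$ plus boundary terms that vanish at collision endpoints. I would then bound $\sup|\bfr|$ on $[t_n,t_0]$ through the kinetic term and Cauchy--Schwarz.
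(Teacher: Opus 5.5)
Your argument is correct in substance, but it takes a different route from the paper.

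\textbf{The paper's route.} The paper needs no a priori control on the accumulated side. Assuming accumulation from the right only, it picks collision times $b_k\downarrow t_0$ and sets $\dl_k=b_k-t_0$. It then builds an explicit competitor: it deletes the collision-to-collision arc $\bfr|_{[t_0,b_k]}$, waits at $\bfr(t_0-\dl_k)$ for time $\dl_k$, and then runs the left arc shifted by $\dl_k$. Gordon's theorem bounds the deleted Kepler action from below by $3(\mu^2\pi^2/2)^{1/3}\dl_k^{1/3}$. The one-sided Sundman estimate on the collision-free side bounds the waiting cost by $2(2/9)^{1/3}\mu^{2/3}\dl_k^{1/3}$. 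All perturbation terms are $O(\dl_k)$, so the competitor has strictly smaller action.

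\textbf{Your route.} You use a test path only to get the \emph{upper} bound $\A(x;t_n,t_0)\le C\lmd_n^{1/3}$. Together with one-sided Sundman on the free side, this gives $H^1$-compactness of the blow-ups. You then pass minimality to the limit and invoke Marchal's lemma for Kepler. This is the same scheme as the paper's proof of Proposition~\ref{prop;coll-free}, so it handles both propositions with one blow-up idea and avoids explicit constants. The cost is that it uses Marchal's averaging lemma as a black box plus a compactness and gluing argument. The paper's proof of this proposition is elementary and quantitative.

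\textbf{Points to tighten.}
\begin{enumerate}
\item Blow up on $[-1,1]$, not $[-2,1]$. You have no control of $\bfr$ on $[t_0-2\lmd_n,t_n]$, since collisions may be sparse (e.g. $t_0-t_n=2^{-n^2}$). Moreover $s=0$ is already an interior collision of the limit on $[-1,1]$.
\item The comparison gives $\sup_{[t_n,t_0]}|\bfr|\le C\lmd_n^{2/3}$, not the pointwise bound $|\bfr(t)|\le C|t-t_0|^{2/3}$. The sup bound is all the blow-up needs.
\item Your claim that local minimality becomes global minimality on short intervals is true, and you should say why. Any competitor $y$ with $\A(y;I)\le\A(x;I)$ satisfies $\int_I|\dot y|^2\le 2\A(x;I)\to 0$ as $|I|\to0$, so $y$ is $H^1$-close to $x$. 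This justifies both the comparison step and the blow-down competitors.
\item Run the last step by contradiction. Since $\bfr_\infty(0)=0$, Marchal's lemma gives an $\eta$ with the same endpoints and $\bar{\A}(\eta)<\bar{\A}(\bfr_\infty)$. Weak lower semicontinuity of $\bar{\A}$ along $\bfr_n$, and the vanishing of the rescaled perturbation terms, then yield a better competitor for $x$ once $n$ is large. No gluing is needed at $s=-1$, where both endpoints are $0$. At $s=1$ the correction is harmless because $\bfr_\infty(1)=(9\mu/2)^{1/3}\bs^+\neq 0$.
\end{enumerate}
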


\begin{proof}
Without loss of generality, let's assume $x(t_0) = q_{i_0}(t_0)$ and $\dl$ is small enough such that there are two positive constants $C_i$, $i =1, 2$, satisfying
\begin{equation}
\label{eq;C1} \max \{ |\ddot{q}_{i_0}(t)|: \forall t \in [t_0 -\dl, t_0 +\dl] \} \le C_1;
\end{equation}
\begin{equation}
\label{eq;C2} \min \{ |x(t) - q_i(t)|: \; \forall t \in [t_0 -\dl, t_0+\dl] \; \text{and } \forall i \ne i_0 \} \ge C_2^{-1}. 
\end{equation}
Then the massless body can only collide with $m_{i_0}$. Denote the set of collision moments by
\begin{equation*}
\label{eq;Z} Z(t_0,\delta)\;=\; \{t\in (t_0-\delta,t_0+\delta): \; x(t) - q_{i_0}(t)=0\}.
\end{equation*}

By a contradiction argument, let's assume $ Z(t_0,\delta)$ is infinite and $t_0\in  Z(t_0,\delta)$ is accumulated from one side. 
On $Z(t_0,\delta)$ the Lagrangian is infinite, so $Z(t_0,\delta)$ has zero Lebesgue measure. 
It is closed, so its complement can be expressed as a countable union of disjoint open intervals. 

There are two cases: (i) $t_0$ is accumulated from the right but not from the left;
(ii) $t_0$ is accumulated from the left but not from the right.

Consider case (i). 
As $t_0$ is accumulated from the right, we can find open intervals $\{(a_k,b_k)\}$ in 
$(t_0,t_0+\delta)\setminus Z(t_0,\delta)$ such that $t_0<a_{k+1} < b_{k+1} < a_k < b_k$ for each $k\in \N$,  
and both $\{a_k\}$, $\{b_k\}$ are sequences of collision moments and they both converge  to $t_0$. Then
$$ \dl_k = b_k -t_0 \to 0, \;\text{ as } k \to \infty. $$
Since $t_0$ is not accumulated from the left, when $k$ is large enough, there is no collision on $(t_0-\delta_k, t_0)$. 

Let $\bfr=x-q_{i_0}$ and $m_{i_0} = \mu$. Then 
$$ \A(x; t_0-\dl, t_0 +\dl) = \int_{t_0 -\dl}^{t_0 +\dl} \frac{1}{2}|\dot{\bfr}|^2 + \frac{\mu}{|\bfr|} + \sum_{i \ne i_0} \frac{m_i}{|\bfr + q_{i_0} - q_i|} + \ey |\dot{q}_{i_0}|^2 + <\dot{\bfr}, \dot{q}_{i_0} > \,dt. $$
Meanwhile we define a new path $\tilde{x} = \tilde{\bfr}+ q_{i_0} \in H^1_{t_0 -\dl, t_0 +\dl}(\rr^d)$ with
$$
 \tilde{\bfr} \;=\; \left\{ \begin{array}{ll} \bfr(t-\dl_k) & \text{if} \;\; t\in (t_0,t_0+\dl_k], \\ 
 \bfr(t_0-\dl_k) & \text{if} \;\; t\in (t_0-\dl_k, t_0], \\
 \bfr(t)   &   \text{otherwise.}
 \end{array}\right. 
$$
Its action value is
$$ \A(\tilde{x}; t_0-\dl, t_0 +\dl) = \int_{t_0 -\dl}^{t_0 +\dl} \frac{1}{2}|\dot{\tilde{\bfr}}|^2 + \frac{\mu}{|\tilde{\bfr}|} + \sum_{i \ne i_0} \frac{m_i}{|\tilde{\bfr} + q_{i_0} - q_i|} + \ey |\dot{q}_{i_0}|^2 + <\dot{\tilde{\bfr}}, \dot{q}_{i_0} > \,dt. $$
We get a contradiction, when the following holds  
\begin{equation*}
\label{eq;dA} \A(x; t_0-\dl, t_0 +\dl) -\A(\tilde{x}; t_0-\dl, t_0 +\dl) >0
\end{equation*}

To show this, first we notice that the continuity of $x(t)$ implies 
\begin{equation}
\label{eq;ep-k} \ep_k = \max \{|\bfr (t)|: \; \forall t \in [t_0 -\dl_k, t_0 +\dl_k] \} \to 0, \; \text{ as } k \to \infty. 
\end{equation}
Meanwhile by Sundman's estimates (see Proposition \ref{prop;Sundman}), for $k$ large enough
\begin{eqnarray}
  \frac{1}{|\bfr (t)|} &\le & 2 \left(\frac{2}{9\mu} \right)^\frac{1}{3} |t-t_0|^{-\frac{2}{3} }, 
   \quad  \forall t\in [t_0-\delta_k,t_0).  \label{eqn:Sund1}
\end{eqnarray}
and by the definition of $\tilde{\bfr}$,  
\begin{equation}
\label{eq;tl-r-le} |\tilde{\bfr}(t)| \le  2 \left(\frac{9\mu}{2} \right)^\frac{1}{3} \dl_k^{\frac{2}{3} }, \;\; \forall t \in [t_0 -\dl_k, t_0 +\dl_k]. 
\end{equation}

Now note that $\bfr(t_0)=\bfr(t_0+\dl_k)=0$. Then by Gordon's theorem~\cite{Gor77} and \eqref{eqn:Sund1}. 
\begin{equation}
\label{eq;dA-1}  \begin{aligned}
\int_{t_0 -\dl_k}^{t_0+\dl_k} \ey|\dot{\bfr}|^2 + \frac{\mu}{|\bfr|} \,dt & - \int_{t_0-\dl_k}^{t_0 +\dl_k} \ey |\dot{\tilde{\bfr}}|^2 + \frac{\mu}{|\tilde{\bfr}|} \,dt \\
& = \int_{t_0}^{t_0+\dl_k} \ey|\dot{\bfr}|^2 + \frac{\mu}{|\bfr|} \,dt -\int_{t_0-\dl_k}^{t_0}\ey |\dot{\tilde{\bfr}}|^2 + \frac{\mu}{|\tilde{\bfr}|} \,dt \\
& \ge 3 \left(\frac{\mu^2 \pi^2}{2}\right)^\frac{1}{3} \dl_k^\frac{1}{3} - \frac{\mu\,  \dl_k}{|\bfr(t_0-\dl_k)|} \\
& \ge \left( 3 \left(\frac{\pi^2}{2}\right)^{\frac{1}{3}} - 2\left( \frac{2}{9}\right)^{\frac{1}{3}} \right) \mu^{\se} \dl_k^{\frac{1}{3}}.
\end{aligned}
\end{equation}

Meanwhile since $x \in H^1_{t_0 -\dl, t_0+\dl}$ and $q_{i_0} \in \mathcal{C}^1$, $\bfr \in H^1_{t_0-\dl, t_0+\dl}$ as well. Then by the definition of $\tilde{\bfr}$, it also belongs to $H^1_{t_0 -\dl, t_0+\dl}$. As a result, using integration by parts  
\begin{equation} \label{eq;dA-3}
\begin{aligned}
\left| \int_{t_0 -\dl_k}^{t_0+\dl_k} < \dot{\bfr}- \dot{\tilde{\bfr}}, \dot{q}_{i_0}> \,dt \right| & = \left| \int_{t_0 -\dl_k}^{t_0 +\dl_k} <\bfr -\tilde{\bfr}, \ddot{q}_{i_0}> \,dt \right|  \le  \int_{t_0 -\dl_k}^{t_0 +\dl_k} |\ddot{q}_{i_0}| (|\bfr| + |\tilde{\bfr}|) \,dt     \\
& \le 2 C_1 \dl_k \left(\ep_k + 2 \left(\frac{9 \mu}{2} \right)^{\frac{1}{3}} \dl_k^{\se} \right). 
\end{aligned}
\end{equation}
where the last inequality follows from \eqref{eq;C1}, \eqref{eq;ep-k} and \eqref{eq;tl-r-le}. 

Finally recall that $m= \sum_{i}m_i$, for all $ t \in [t_0 -\dl, t_0 +\dl]$, by \eqref{eq;C2}
$$ \sum_{i \ne i_0} \frac{m_i}{|\bfr + q_{i_0} -q_i|} = \sum_{i \ne i_0} \frac{m_i}{|x -q_i|} \le (m -m_{i_0}) C_2, $$ 
and by \eqref{eq;ep-k} and \eqref{eq;tl-r-le}, for $k$ large enough
$$ \begin{aligned}
\sum_{i \ne i_{0}} \frac{m_i}{|\tilde{\bfr} + q_{i_0} - q_i|} & = \sum_{i \ne i_{0}} \frac{m_i}{|x-q_i + \tilde{\bfr} -\bfr|} \le  \sum_{i \ne i_{0}}\frac{m_i}{ |x-q_i| -| \tilde{\bfr} -\bfr|} \\
& \le   \sum_{i \ne i_{0}} m_i \left( C_2^{-1} - 2 \left(\frac{9\mu}{2} \right)^\frac{1}{3} \dl_k^{\frac{2}{3} } - \ep_k \right)^{-1} \\
& \le 2 (m -m_{i_0})C_2
\end{aligned}
$$
The estimates imply 
\begin{equation}
\label{eq;dA-2}  \left| \int_{t_0 -\dl_k}^{t_0 +\dl_k}\sum_{i \ne i_0} \frac{m_i}{|\bfr + q_{i_0} - q_i|}-  \sum_{i \ne i_0} \frac{m_i}{|\tilde{\bfr} + q_{i_0} - q_i|}  \,dt\right|  \le 3m C_2\dl_k. 
\end{equation}

According to \eqref{eq;dA-1}, \eqref{eq;dA-2} and \eqref{eq;dA-3}
$$ \begin{aligned}
\A(x; & t_0-\dl, t_0 +\dl) -\A(\tilde{x}; t_0-\dl, t_0 +\dl) \\
& \ge  \left( 3 \left(\frac{\pi^2}{2}\right)^{\frac{1}{3}} - 2\left( \frac{2}{9}\right)^{\frac{1}{3}} \right) \mu^{\se} \dl_k^{\frac{1}{3}} - (2 C_1 \ep_k + 3 m C_2) \dl_k - 4 C_1 \left( \frac{9 \mu}{2} \right)^{\frac{1}{3}} \dl_k^{\frac{5}{3}} \\
& >0, \; \text{ when $k$ is large enough}.  
\end{aligned}
$$
\end{proof}

\begin{prop}
\label{prop;coll-free} If $x \in H^1_{t_0-\dl, t_0+\dl}(\rr^d)$ is a collision-ejection solution of $\eqref{eq;RNBP1}$ with an isolated collision $x(t_0)$, then it can not be a fixed-ends local minimizer of $\A$. 
\end{prop}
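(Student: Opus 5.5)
The plan is to reduce to the classical Marchal blow-up and averaging argument for the Kepler problem, treating all time-dependence and the other primaries as perturbations of lower order. Set $\bfr = x - q_{i_0}$ and $\mu = m_{i_0}$. By Proposition \ref{prop;Sundman}, $\bfr$ is a Kepler collision-ejection path with bounded perturbation. Its limiting directions $\bs^\pm$ exist, and $\bfr(t)$ behaves like $(9\mu/2)^{1/3}|t-t_0|^{2/3}\bs^\pm$ near $t_0$. Arguing by contradiction, suppose $x$ is a fixed-end local minimizer on $[t_0-\dl,t_0+\dl]$. Shrinking $\dl$ keeps this property, since restrictions of local minimizers are local minimizers with fixed ends. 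So I may take $\dl = \dl_k \to 0$ and work with the rescaled paths
\[
\bfr_k(s) = \dl_k^{-2/3}\, \bfr(t_0 + \dl_k s), \qquad s \in [-1,1].
\]
By the Sundman estimates, $\bfr_k$ converges in $H^1$ on compact subsets away from $s=0$, and uniformly on $[-1,1]$, to the parabolic collision-ejection Kepler path $\bar{\bfr}(s) = (9\mu/2)^{1/3}|s|^{2/3}\bs^{\pm}$. Here the sign is chosen according to that of $s$.

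Next I compare actions on $[t_0-\dl_k,t_0+\dl_k]$ using the same splitting as in the proof of Proposition \ref{prop;isolated-coll}. The action of $x$ equals the Kepler part $\int \ey|\dot\bfr|^2 + \mu/|\bfr|$ plus three remainders: the cross term $\int\la \dot\bfr,\dot q_{i_0}\ra$, the smooth potential of the other primaries, and the $\bfr$-independent term $\int \ey|\dot q_{i_0}|^2$. Take any competitor $\tilde\bfr$ with the same endpoints and $\sup|\tilde\bfr| = O(\dl_k^{2/3})$. Integration by parts together with \eqref{eq;C1} bounds the change in the cross term by $O(\dl_k^{5/3})$, and \eqref{eq;C2} bounds the change in the other-primaries potential by $O(\dl_k)$. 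The Kepler part scales as $\dl_k^{1/3}$ times the rescaled Kepler action of $\bfr_k$ on $[-1,1]$. It therefore suffices to produce competitors $\tilde\bfr_k$ whose rescaled Kepler action is smaller than that of $\bfr_k$ by a fixed $\eta>0$, independent of $k$. Since $\tilde x = \tilde\bfr + q_{i_0}$ is close to $x$ in $H^1$ for $k$ large, this contradicts local minimality.

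To build the competitors I use Marchal's averaging for the limit path $\bar\bfr$. For $d\ge 2$ (the case $d=2$ handled as in \cite{FT04}), average the Kepler action over a family of deformations $\bar\bfr + \delta\varphi(s)\xi$, with $\xi$ ranging over a suitable sphere orthogonal to $\bs^\pm$, or through the standard averaging argument of \cite{C02, FT04}. This yields a variation $\bar\bfr + v$ with compact support in $(-1,1)$, no collision, and strictly smaller Kepler action; say it is smaller by $2\eta$. Set $\tilde\bfr_k = \bfr_k + v$ in rescaled time and scale back. The Kepler action is continuous under uniform convergence of the paths, together with $L^2$ convergence of the velocities, when the perturbed paths stay away from $0$. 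Using this, the action of $\bfr_k + v$ is within $\eta$ of that of $\bar\bfr + v$ for $k$ large. Likewise the action of $\bfr_k$ is within $\eta/2$ of that of $\bar\bfr$, by lower semicontinuity and the fact that the action of $\bfr_k$ converges, which follows from Sundman's energy asymptotics. This gives the uniform gain $\eta$.

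I expect the main obstacle to be this convergence of rescaled actions near $s=0$: showing that $\int \ey|\dot\bfr_k|^2+\mu/|\bfr_k|$ converges to the limiting action, rather than only being lower semicontinuous. My first approach would be to use the derivative estimate in Proposition \ref{prop;Sundman} to dominate the integrand by $C|s|^{-2/3}$, which is integrable, and then apply dominated convergence. A second point needing care is that $v$ must avoid collision uniformly in $k$. This holds because $\bar\bfr+v$ stays at a positive distance from $0$ on $\operatorname{supp} v$, and $\bfr_k\to\bar\bfr$ uniformly.
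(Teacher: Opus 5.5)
Your proposal is correct and follows essentially the same blow-up argument as the paper. You rescale at the isolated collision, pass to the parabolic Kepler collision-ejection limit $\bar{\bfr}$, use its non-minimality (Proposition \ref{prop;rbar-not-minimizer}) to gain an amount of order $\delta_k^{1/3}$, and check that the cross term and the other primaries contribute only at higher order. The differences are minor: you use the additive competitor $\bfr_k+v$, closing the action comparison by dominated convergence from the Sundman bounds, whereas the paper uses the glued path $\bfu^{\lmd_n}$ with interpolation layers (which needs only weak lower semicontinuity), and you bound the cross term by integration by parts as in Proposition \ref{prop;isolated-coll} rather than by the Sundman velocity estimate.
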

This follows from a blow-up argument and the details will be deferred to the next section. 

\begin{proof}[Proof of Theorem \ref{thm;Marchal-restricted}]
By a contradiction argument let us assume the local minimizer $x \in H^1_{t_1, t_2}(\rr^d)$ is not collision-free, and denote the set of collision moments by 
$$ Z(t_1, t_2) = \{ t \in (t_1, t_2): x(t) = q_{i}(t), \text{ for some } i \}. $$
Since $\A(x; t_1, t_2)$ is finite, $Z(t_1, t_2)$ is a subset of $(t_1, t_2)$ with zero Lebesgue measure. 
Then there must exist an open interval $(\tau_1, \tau_2) \subset (t_1, t_2)$, such that $x(t)$ is collision-free for all $t \in (\tau_1, \tau_2)$, and 
$$ t_1 \ne \tau_1 \in Z(t_1, t_2) \text{ or } \; t_2 \ne \tau_2 \in Z(t_1, t_2). $$

Without loss of generality, let's assume the former case holds. Then $\tau_1$ is a collision moment, and it 
can not be accumulated by collision moments from both sides. By Proposition \ref{prop;isolated-coll}, $x(\tau_1)$ 
must be an isolated collision, and then by Proposition \ref{prop;coll-free}, $x$ can not be a fixed-end local minimizer, which is a contradiction to our assumption.  
\end{proof}


\section{Proof of Proposition \ref{prop;coll-free}} \label{sec;blow-up}
To simplify notation, there is no loss of generality to assume the isolated collision moment $t_0=0$ with $x(0)= q_N(0)$. We further set 
$$ \bfr = x - q_N, \text{ and } \mu = m_N. $$
Then $\A(x; -\dl, \dl)$ can be rewritten as 
\begin{equation*}
\A(\bfr;  -\dl,  \dl)  = \int_{t_0 -\dl}^{t_0 +\dl} \frac{1}{2}|\dot{\bfr} + \dot{q}_N|^2 + \frac{\mu}{|\bfr|} + \sum_{i=1}^{N-1} \frac{m_i}{|\bfr + q_{N} - q_i|} \,dt.
\end{equation*}

Our proof is based on a blow-up argument from \cite{FT04} and a certain variational property of the Kepler problem. Consider the following Kepler problem in $\rr^d$
\begin{equation}
\label{eq;kepler} \ddot{\xi} = -\mu \frac{\xi}{|\xi |^3}, \;\; \mu >0.
\end{equation}
It is the Euler-Lagrange equation of the Lagrangian action functional $\bar{\A}$
\begin{equation}
\label{eq;LagKepler} \bar{\A}(\xi; t_1, t_2) = \int_{t_1}^{t_2} \bar{L}(\xi, \dot{\xi}) \,dt, \; \text{ where } \bar{L}(\xi, \dot{\xi}) = \ey |\dot{\xi}|^2 + \frac{\mu}{|\xi|}. 
\end{equation}

\begin{prop}
\label{prop;rbar-not-minimizer} For any $\bs^{\pm} \in \mathbb S$, let $\bar{\bfr}$ be \emph{a parabolic collision-ejection solution} of \eqref{eq;kepler} with
\begin{equation}
\label{eq;Kepler-Coll-Eject} \bar{\bfr}(t) = \begin{cases}
\left(\frac{9}{2}\mu \right)^{\frac{1}{3}} |t|^{\frac{2}{3}} \bs^-, & \text{ if } t \le 0; \\
\left(\frac{9}{2}\mu \right)^{\frac{1}{3}} |t|^{\frac{2}{3}} \bs^+, & \text{ if } t \ge 0.
\end{cases}
\end{equation} 
For any $T>0$, there is a collision-free solution $\rt(t)$ of \eqref{eq;kepler} satisfying 
$$  \bar{\A}(\rt; -T, T)  < \bar{\A}(\bar{\bfr}; -T, T) \; \text{ and } \; \rt(\pm T) = \bar{\bfr}(\pm T). $$
\end{prop}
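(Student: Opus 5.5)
The plan is to reduce to a single normalized case by scaling, produce a competitor with strictly smaller action by Marchal's averaging argument, and then upgrade it to a collision-free solution by direct minimization.

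\textbf{Scaling.} The Kepler action is invariant, up to a positive factor, under the parabolic scaling $\xi_\lambda(t)=\lambda^{2/3}\xi(t/\lambda)$, since $\bar{\A}(\xi_\lambda;-\lambda T,\lambda T)=\lambda^{1/3}\bar{\A}(\xi;-T,T)$. The path $\bar{\bfr}$ is fixed by this scaling. It therefore suffices to treat $T=1$, and by rotational invariance we may place $\bs^{\pm}$ in a fixed plane $\Pi$, or in a fixed $3$-space when $d\ge 3$.

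\textbf{Strict improvement by averaging.} Let $\vph\in H^1_0([-1,1])$ be a bump that equals $1$ near $t=0$. For unit vectors $u$ in a $2$-sphere $S^2$ inside a $3$-dimensional subspace containing $\bs^{\pm}$, consider the deformations $\bar{\bfr}+\delta\vph\, u$. We average the action over $u\in S^2$, using $\int_{S^2}|a+\delta u|^{-1}\,d\sigma(u)/4\pi=\min\{|a|^{-1},\delta^{-1}\}$.
\begin{itemize}
\item The kinetic term increases by $O(\delta^2)$ after averaging, because the cross term averages to zero.
\item The potential term decreases by an amount of order $\delta^{1/2}$. Indeed, on the set where $|\bar{\bfr}(t)|<\delta$, a time interval of length $\sim\delta^{3/2}$, we have $\int(|\bar{\bfr}|^{-1}-\delta^{-1})\,dt\sim\delta^{1/2}$.
\end{itemize}
Hence for small $\delta$ some $u$ gives a path with the same endpoints and strictly smaller action than $\bar{\bfr}$. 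This argument is insensitive to whether $\bs^+=\bs^-$, so no case split is needed there.

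\textbf{From a competitor to a solution.} Next we minimize $\bar{\A}$ over paths with endpoints $\bar{\bfr}(\pm1)$, either in $H^1$ of $\rr^d$ or in the appropriate homotopy class of paths avoiding the origin.
\begin{itemize}
\item The Tonelli direct method gives a minimizer $\rt$, by coercivity and weak lower semicontinuity.
\item Its action is at most that of the averaged competitor, hence strictly less than $\bar{\A}(\bar{\bfr};-1,1)$.
\item $\rt$ cannot contain collisions. By the isolation argument of Proposition \ref{prop;isolated-coll}, which applies verbatim with $q_{i_0}\equiv0$, any collision would be isolated. Near it, $\rt$ would be asymptotic, after blow-up, to a parabolic collision-ejection path of the form \eqref{eq;Kepler-Coll-Eject}, and the averaging step applied locally would lower its action. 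This contradicts minimality.
\item So $\rt$ is collision-free and therefore a classical solution of \eqref{eq;kepler}.
\end{itemize}

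\textbf{Main obstacle: $d=2$.} In the plane the $S^2$ averaging is not available, and averaging over a circle only yields a logarithmic, non-strict gain. The deformed path lives in $\rr^3$, so I must bring it back to $\rr^2$. I would try the following map: write the $\rr^3$ path in spherical form $\rho(t)\omega(t)$ and send it to the planar path $\rho(t)e^{i\theta(t)}$ with $\dot\theta=|\dot\omega|$. This preserves $|\xi|$ and $|\dot\xi|$, hence the action.

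The planar endpoint angle is then the spherical arc length $\ell$ of $\omega$, which need not equal the angle $\theta_0$ between $\bs^-$ and $\bs^+$. To fix the endpoint, I would choose $\delta$ and the direction $u$ so that $\ell$ matches $\theta_0$ modulo $2\pi$ (after possibly reversing orientation), and use the resulting winding class in the planar minimization above. If this fails, the fallback is Gordon-type explicit comparison with elliptic Kepler arcs between $\bar{\bfr}(-1)$ and $\bar{\bfr}(1)$ in time $2$. The degenerate case $\bs^+=\bs^-$ needs a near-collinear ellipse, and its action gap must be checked by hand.
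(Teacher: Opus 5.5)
\textbf{The case $d\ge 3$.} Your argument here is sound, and it is essentially the Marchal averaging that the paper cites instead of proving (\cite{Mc02,C02}). The upgrade to a collision-free solution, by minimizing and then using isolation plus blow-up, is not circular: the blow-up step only needs some $H^1$ path with the same endpoints and strictly smaller action. One correction: Proposition \ref{prop;isolated-coll} by itself only says that a collision is isolated \emph{or} accumulated from both sides. To produce an isolated collision you must run the complementary-interval argument from the proof of Theorem \ref{thm;Marchal-restricted}.

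\textbf{The gap is $d=2$.} The paper needs this case, and your unrolling does not cover it. Take your bump monotone in $|t|$. On $[-1,0]$ the deformed path is $a(t)\bs^-+b(t)u$ with $a,b\ge 0$, $a$ decreasing and $b$ increasing; on $[0,1]$ the same holds with $\bs^+$. So its direction sweeps the minor great-circle arc from $\bs^-$ to $u$, then from $u$ to $\bs^+$, each exactly once. Hence $\ell(u)=\angle(\bs^-,u)+\angle(u,\bs^+)$, which is independent of $\dl$ and satisfies $\theta_0\le \ell(u)\le 2\pi-\theta_0$. Attaining either bound is a case of equality in the spherical triangle inequality. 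So $\ell\equiv\pm\theta_0 \pmod{2\pi}$ forces $u$ onto the great circle through $\bs^\pm$ (and $u=\bs^-$ if $\bs^+=\bs^-$), which is a planar deformation again. That is a null set of $u$ about which the $S^2$ average says nothing, and tuning $\dl$ changes nothing. Only when $\bs^+=-\bs^-$, where $\ell\equiv\pi$ for every $u$, does the unrolling work. The elliptic-arc fallback is announced, not carried out.

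\textbf{The missing idea.} For a single fixed center no averaging is needed, in any $d\ge 2$. Pick a unit $u$ with $\langle u,\bs^-\rangle\ge 0$ and $\langle u,\bs^+\rangle\ge 0$: the normalized bisector if $\bs^+\ne-\bs^-$, and any $u\perp\bs^-$ otherwise. Take an even bump $\vph$ equal to $1$ near $0$ and nonincreasing in $|t|$.
\begin{itemize}
\item Pointwise $|\bar{\bfr}+\dl\vph u|^2\ge|\bar{\bfr}|^2+\dl^2\vph^2$. So the potential drops by at least $\mu\int\bigl(|\bar{\bfr}|^{-1}-(|\bar{\bfr}|^2+\dl^2\vph^2)^{-1/2}\bigr)\,dt$, which is of order $\dl^{1/2}$ by your own scaling.
\item The first-order kinetic term $\dl\int\dot{\vph}\,\langle\dot{\bar{\bfr}},u\rangle\,dt$ is $\le 0$. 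Indeed $\dot{\vph}\ge 0$ where $\dot{\bar{\bfr}}$ is a negative multiple of $\bs^-$, and $\dot{\vph}\le 0$ where it is a positive multiple of $\bs^+$.
\item The only cost is $\frac{\dl^2}{2}\int\dot{\vph}^2$.
\end{itemize}
This gives a strict competitor for every $\bs^\pm$, including $\bs^+=\bs^-$. Used again inside your blow-up step, it makes the rest of your scheme go through unchanged in the plane.

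\textbf{Circle averaging.} It also succeeds in the plane; your ``non-strict gain'' is an artifact of comparing pointwise. For $|t|\gtrsim\dl^{3/2}$ the circle average exceeds $\mu/|\bar{\bfr}|$ by about $\mu\dl^2/(4|\bar{\bfr}|^3)$. That excess integrates to order $\dl^{1/2}$, the same order as the gain near the collision. The explicit elliptic-integral comparison of the two constants (as in \cite{FT04}) comes out negative.
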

This is a well-known result and the first proofs can be found in \cite{Mc02} and \cite{C02}.

Recall that by Sundman's estimates (Proposition \ref{prop;Sundman}), there exists $\bs^{\pm} \in \mathbb S$, such that
$$ \lim_{t \to 0^{\pm}} \frac{\bfr(t)}{|\bfr(t)|} =  \lim_{t \to 0^{\pm}} \frac{x(t)-q_N(t)}{|x(t)-q_N(t)|} = \bs^{\pm}. $$
From now on let $\bar{\bfr}$ be the parabolic collision-ejection solution as defined in \eqref{eq;Kepler-Coll-Eject} with the above $\bs^{\pm}$. 
\begin{dfn}
\label{def;blow-up}
For any $\lmd>0$, define the \textbf{$\lmd$-blow-up} of $\bfr$ and $q_i$, $i =1, \dots, N$, as
\begin{equation}
\label{eq;blowup-r} \bfr^{\lmd}(s) = \lmd^{\se} \bfr(\lmd^{-1}s) \; \text{ and } \;  \ql_i(s) = \lmd^{\se} q_i(\lmd^{-1} s), \;\; \forall s \in [-\lmd \dl, \lmd \dl]. 
\end{equation}
\end{dfn}
Notice that $\bfr^{\lmd}|_{(-\lmd \dl, 0) \cup (0, \lmd \dl)}$ is a solution of 
\begin{equation}
\label{eq;ResNbody-blowup} \ddot{\bfr}^{\lmd}  + \ddot{q}_i^{\lmd}  = - \mu \frac{\bfr^{\lmd} }{|\bfr^{\lmd} |^3} - \sum_{i=1}^{N-1} \frac{m_i(\bfr^{\lmd}  + \ql_{N}  - \ql_i )}{|\bfr^{\lmd}  + \ql_{N}  - \ql_i |^3} 
\end{equation}
which is the Euler-Lagrangian equation of the following action functional
\begin{equation}
\A^{\lmd}(\bfr^{\lmd}; s_1, s_2) = \int_{s_1}^{s_2} L^{\lmd}(\bfr^{\lmd} , \dot{\bfr}^{\lmd} , s) \,ds
\end{equation}
where
\begin{equation}
\label{eq;LagRes-blowup} L^{\lmd}(\bfr^{\lmd} , \dot{\bfr}^{\lmd} , s) = \ey |\dot{\bfr}^{\lmd}  + \dot{q}^{\lmd}_{N}  |^2 + \frac{\mu}{|\bfr^{\lmd} |} + \sum_{i =1}^{N-1} \frac{m_i}{|\bfr^{\lmd}  + \ql_{N}  - \ql_i |}.
\end{equation}

\begin{lem}
\label{lem:ScaleofAction} For any $\dl_1 \in (0, \dl]$,  $\A^{\lmd}(\bfr^{\lmd}; -\lmd \dl_1, \lmd \dl_1) = \lmd^{\frac{1}{3}} \A (\bfr; -\dl_1, \dl_1). $ 
\end{lem}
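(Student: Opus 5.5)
The identity is a change of variables $t=\lmd^{-1}s$ in the integral defining $\A^{\lmd}$. The plan is to check how each of the three terms of $L^{\lmd}$ scales, and show that each picks up the same factor $\lmd^{\frac{2}{3}}$ pointwise. The Jacobian $ds=\lmd\,dt$ then turns this into the overall factor $\lmd^{\frac{2}{3}}\cdot\lmd^{-1}$, to be compared with $\lmd^{\frac13}$ at the end.

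\emph{Kinetic term.} From Definition \ref{def;blow-up}, differentiating in $s$ gives
$$\dot{\bfr}^{\lmd}(s)=\lmd^{\frac23}\lmd^{-1}\dot{\bfr}(\lmd^{-1}s)=\lmd^{-\frac13}\dot{\bfr}(\lmd^{-1}s),$$
and likewise $\dot q^{\lmd}_N(s)=\lmd^{-\frac13}\dot q_N(\lmd^{-1}s)$. Hence
$$\ey|\dot{\bfr}^{\lmd}+\dot q^{\lmd}_N|^2(s)=\lmd^{-\frac23}\,\ey|\dot{\bfr}+\dot q_N|^2(\lmd^{-1}s).$$

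\emph{Potential terms.} These are homogeneous of degree $-1$ in positions. Since
$$\bfr^{\lmd}+\ql_N-\ql_i=\lmd^{\frac23}\big(\bfr+q_N-q_i\big)(\lmd^{-1}s),$$
every term $\mu/|\bfr^{\lmd}|$ and $m_i/|\bfr^{\lmd}+\ql_N-\ql_i|$ equals $\lmd^{-\frac23}$ times the corresponding term evaluated at $t=\lmd^{-1}s$.

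\emph{Putting it together.} Combining the three terms gives the pointwise identity $L^{\lmd}(s)=\lmd^{-\frac23}L(\lmd^{-1}s)$. Substituting $s=\lmd t$, so that $ds=\lmd\,dt$ and $s\in[-\lmd\dl_1,\lmd\dl_1]$ corresponds to $t\in[-\dl_1,\dl_1]$, yields
$$\A^{\lmd}(\bfr^{\lmd};-\lmd\dl_1,\lmd\dl_1)=\lmd^{1-\frac23}\A(\bfr;-\dl_1,\dl_1)=\lmd^{\frac13}\A(\bfr;-\dl_1,\dl_1).$$

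There is no real obstacle. The only points needing care are that the collision at $t=0$ is a single point, hence of measure zero, and that the integrand is nonnegative. Together these make the change of variables legitimate even where the integrals might be infinite; both sides are in fact finite because $x\in H^1$ and the action is finite.
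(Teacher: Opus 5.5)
Your proof is correct and follows the paper's own argument: the pointwise identity $L^{\lmd}(s)=\lmd^{-\frac23}L(\lmd^{-1}s)$, then the substitution $s=\lmd t$ with $ds=\lmd\,dt$. Your remarks that the collision time has measure zero and that the integrand is nonnegative add justification the paper leaves implicit, and they do no harm.
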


\begin{proof}
By a direct computation, 
$$ L^{\lmd}(\bfr^{\lmd}(s), \dot{\bfr}^{\lmd}(s), s) = \lmd^{-\se} L(\bfr(t), \dot{\bfr}(t), t). $$
The result then follows from the fact that $t = \lmd^{-1}s$, as it implies $ds = \lmd dt$. 
\end{proof}
 
\begin{lem}
\label{lem:ztl-to-ztb} For any $T>0$, as $\lmd$ goes to infinity, 
\begin{enumerate}
\item[(a).] $\bfr^{\lmd}(s)$ converges uniformly to $\bar{\bfr}(s)$ on $[-T, T]$;
\item[(b).] $\dot{\bfr}^{\lmd}(s)$ converges uniformly to $\dot{\bar{\bfr}}(s)$ on any compact subset of $[-T, T] \setminus \{0\}$;
\end{enumerate}
\end{lem}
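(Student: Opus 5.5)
The plan is to read both convergences directly off Sundman's estimates (Proposition \ref{prop;Sundman}), using that the blow-up is exactly the scaling under which the leading Sundman term is invariant. Write $\al_\mu = (\frac{9}{2}\mu)^{\frac13}$. By Proposition \ref{prop;Sundman}, as $t\to 0^{\pm}$,
\[
\bfr(t) = \al_\mu |t|^{\se}\bs^{\pm} + \eta(t)|t|^{\se}, \qquad \dot\bfr(t) = \pm\se\al_\mu |t|^{-\sy}\bs^{\pm} + \zeta(t)|t|^{-\sy},
\]
with $\eta(t),\zeta(t)\to 0$ as $t\to0$. Since $\bar\bfr(s) = \al_\mu|s|^{\se}\bs^{\pm}$ satisfies $\lmd^{\se}\bar\bfr(\lmd^{-1}s)=\bar\bfr(s)$, substituting $t=\lmd^{-1}s$ into \eqref{eq;blowup-r} gives
\[
\bfr^{\lmd}(s) - \bar\bfr(s) = \eta(\lmd^{-1}s)\,|s|^{\se}, \qquad \dot\bfr^{\lmd}(s) = \lmd^{-\sy}\dot\bfr(\lmd^{-1}s) = \dot{\bar\bfr}(s) + \zeta(\lmd^{-1}s)\,|s|^{-\sy}.
\]
For the derivative I use $\frac{d}{ds}\bfr^{\lmd}(s) = \lmd^{\se-1}\dot\bfr(\lmd^{-1}s)$ together with $\lmd^{-\sy}|\lmd^{-1}s|^{-\sy}=|s|^{-\sy}$.

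For (a), fix $T$ and $\ep>0$. Choose $\tau>0$ with $|\eta(t)|<\ep T^{-\se}$ for $0<|t|<\tau$. For $\lmd>T/\tau$ and all $s\in[-T,T]\setminus\{0\}$ we have $|\lmd^{-1}s|<\tau$, so $|\bfr^{\lmd}(s)-\bar\bfr(s)|\le \ep$. At $s=0$ both curves vanish. This gives uniform convergence on all of $[-T,T]$, with no need to excise the origin: the error factor $|s|^{\se}$ is bounded on $[-T,T]$.

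For (b), take a compact $K\subset[-T,T]\setminus\{0\}$, so that $|s|\ge c>0$ on $K$. Then $|\dot\bfr^{\lmd}(s)-\dot{\bar\bfr}(s)|\le c^{-\sy}\sup_{0<|t|\le T/\lmd}|\zeta(t)|\to0$. Here the factor $|s|^{-\sy}$ blows up at $0$, which is exactly why compact subsets away from the origin are required. I also need that $\lmd^{-1}s$ stays in $(0,\dl]$ or $[-\dl,0)$ for $\lmd$ large, which holds once $\lmd>T/\dl$, so that $\bfr^\lmd$ is defined on $[-T,T]$.

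The only delicate point is justifying the velocity asymptotics, i.e.\ that Sperling's estimate applies. That estimate concerns $\dot{\bfr} = \dot x - \dot q_N$, which is exactly what appears in Proposition \ref{prop;Sundman}, so nothing more is needed. If one instead wanted the convergence of $\frac{d}{ds}(\bfr^\lmd+q_N^\lmd)$, the extra term $\lmd^{-\sy}\dot q_N(\lmd^{-1}s)$ is $O(\lmd^{-\sy})$ uniformly, since $q_N\in\mathcal C^1$, and so it also vanishes in the limit.
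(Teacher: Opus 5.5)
Your proposal is correct and follows the same route as the paper, which only says that the lemma follows from Proposition \ref{prop;Sundman} and direct computation. You supply exactly that computation: the scale invariance $\lmd^{\se}\bar{\bfr}(\lmd^{-1}s)=\bar{\bfr}(s)$ reduces the errors to $\eta(\lmd^{-1}s)|s|^{\se}$ and $\zeta(\lmd^{-1}s)|s|^{-\sy}$, which gives uniform convergence on $[-T,T]$ in (a) and on compact sets away from $0$ in (b).
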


\begin{proof}
It follows from Proposition \ref{prop;Sundman} and direct computations. 
\end{proof}

From now on let's fix a $T>0$. By Proposition \ref{prop;rbar-not-minimizer}, we may assume
\begin{equation}
\label{eq;ep0} \bar{\A}(\bar{\bfr}; -T, T) - \bar{\A}(\rt; -T, T) = 6\ep_0 >0. 
\end{equation}

Meanwhile by Lemma \ref{lem:ztl-to-ztb}, $\{\bfr^{\lmd}|_{[-2T, 2T]}: \lmd >2T/\dl \}$ is a bounded subset in $H^1_{-2T, 2T}(\rr^d)$. Then we can find a sequence $\{ \lmd_n \nearrow \infty\}_{n=1}^{\infty}$, such that $\bfr^{\lmd_n}|_{[-2T, 2T]}$ converges weakly to $\bar{\bfr}|_{[-2T, 2T]}$, as $n$ goes to infinity. The weakly lower semi-continuity of $\bar{\A}$ then implies   
\begin{equation} \label{eq;Abar-Ztlmd-Ztbar}
\bar{\A}(\bfr^{\lmd_n}; -2T, 2T) \ge \bar{\A}(\bar{\bfr}; -2T, 2T)-\ep_0, \; \text{ for $n$ large enough}.
\end{equation}

For each $n$ large enough, define a new path $\bfu^{\lmd_n} \in H^1_{-\lmd_n \dl, \lmd_n \dl}(\rr^d)$ as 
\begin{equation}
\label{eq;XiLmdn} \bfu^{\lmd_n}(s) = \begin{cases}
\bfr^{\lmd_n}(s), \; & \text{ when } s \in [2T, \lmd_n\dl], \\
\frac{2T-s}{\dl_n} (\bar{\bfr}(s) - \bfr^{\lmd_n}(s)) + \bfr^{\lmd_n}(s), \; & \text{ when } s \in [2T- \dl_n, 2T], \\
\bar{\bfr}(s), \; & \text{ when } s \in [T, 2T- \dl_n], \\
\rt(s), \; & \text{ when } s \in [-T, T], \\
\bar{\bfr}(s), \; & \text{ when } s \in [-2T+ \dl_n, -T], \\
\frac{s + 2T-\dl_n}{\dl_n}(\bar{\bfr}(s) - \bfr^{\lmd_n}(s)) + \bar{\bfr}(s), \; & \text{ when } s \in [-2T, -2T + \dl_n], \\
\bfr^{\lmd_n}(s), \; & \text{ when } s \in [-\lmd_n\dl, -2T],
\end{cases} 
\end{equation}
where $\dl_n >0$ is a proper sequence of small enough numbers. 

We claim the following inequality holds, whose proof will be postponed for a moment 
\begin{equation}
\label{eq;AlmdXi<Zt} \A^{\lmd_n}(\bfu^{\lmd_n}; -\lmd_n\dl, \lmd_n\dl) < \A^{\lmd_n}(\bfr^{\lmd_n}; -\lmd_n\dl, \lmd_n\dl)\;\; \text{ for } n \text{ large enough}.
\end{equation}
For each $n$ large enough, we define a new path $\bfv_n \in H^1_{-\dl, \dl}(\rr^d)$ as below, 
$$ \bfv_n(t) = \lmd_n^{-\se} \bfu^{\lmd_n}(\lmd_n t), \; \forall t \in [-\dl, \dl]. $$
Since the $\lmd_n$-blow-up of $\bfv_n$ is nothing but $\bfu^{\lmd_n}$, by \eqref{eq;XiLmdn}
$$ \bfv_n(\pm \dl) + q_N(\pm \dl)= \lmd_n^{-\se} \bfr^{\lmd_n}(\pm \lmd_n \dl) + q_N(\pm \dl)=\bfr(\pm \dl) + q_N(\pm \dl) = x(\pm \dl) . $$
Then Proposition \ref{prop;coll-free} follows from the estimates below, which are implied by Lemma \ref{lem:ScaleofAction} and the above claim
$$ \begin{aligned}
\A(x; -\dl, \dl) & - \A(\bfv_n + q_N; -\dl, \dl) = \A(\bfr; -\dl, \dl)  - \A(\bfv_n; -\dl, \dl) \\
& = \lmd_n^{-\sy} \left( \A^{\lmd_n}(\bfr^{\lmd_n}; -\lmd_n\dl, \lmd_n\dl) - \A^{\lmd_n}(\bfu^{\lmd_n}; -\lmd_n\dl, \lmd_n\dl) \right)>0.
\end{aligned} $$

To prove the claim \eqref{eq;AlmdXi<Zt}, notice that by the definition of $\bfu^{\lmd_n}$
\begin{equation} \label{eq;Act-Diff-ztlmn-xilmn}
 \begin{aligned}
 \A^{\lmd_n}(\bfr^{\lmd_n}; & -\lmd_n\dl, \lmd_n\dl) - \A^{\lmd_n}(\bfu^{\lmd_n}; -\lmd_n\dl, \lmd_n\dl)   \\
 & =  \A^{\lmd_n}(\bfr^{\lmd_n}; -2T, 2T) - \A^{\lmd_n}(\bfu^{\lmd_n}; -2T, 2T) \\
 & \ge \left( \bar{\A}(\bfr^{\lmd_n}; -2T, 2T) - \bar{\A}(\bfu^{\lmd_n}; -2T, 2T) \right) + \int_{-2T}^{2T} \langle \dot{\bfr}^{\lmd_n} - \dot{\bfu}^{\lmd_n}, \dot{q}_{N}^{\lmd_n} \rangle \,ds \\
 & - \int_{-2T}^{2T} \sum_{i =1}^{N-1} \frac{m_i}{|\bfu^{\lmd_n} + q^{\lmd_n}_{N} - q^{\lmd_n}_i|} \,ds.
 \end{aligned}
\end{equation}
Hence we shall divide the proof into three steps accordingly. 

\emph{Step 1}: 
By Lemma \ref{lem:ztl-to-ztb}, there is a sequence of positive numbers $\{\dl_n \searrow 0 \}_{n =1}^{\infty}$, such that 
\begin{equation}
\label{eq;LeDeltan} |\bfr^{\lmd_n}(s) - \bar{\bfr}(s)| \le \dl_n, \;\; |\dot{\bfr}^{\lmd_n}(s) - \dot{\bar{\bfr}}(s)| \le \dl_n, \;\; \forall s \in [-2T, 2T] \setminus [-T, T]. 
\end{equation}
Combining this with \eqref{eq;XiLmdn}, for $n$ large enough, we can find two positive constants $C_1$ and $C_2$ independent of $n$, such that 
\begin{equation} \label{eq;xilmd-upper-bound}
|\dot{\bfu}^{\lmd_n}(s)|\le C_1, \;  \forall s \in [-2T, -2T + \dl_n] \cup [2T- \dl_n, 2T].
\end{equation} 
\begin{equation} \label{eq;xilmd-upper-bound-2}
  |\bfu^{\lmd_n}(s)| \ge C_2^{-1}, \;  \forall s \in [-2T, -2T + \dl_n] \cup [2T- \dl_n, 2T]. 
 \end{equation}  
Then for $n$ large enough, 
$$
 \begin{aligned}
 \bar{\A}(\bfu^{\lmd_n}; & -2T, -2T+\dl_n)  + \bar{\A}(\bfu^{\lmd_n}; 2T-\dl_n, 2T) \\
 & \le  \int_{-2T}^{-2T + \dl_n} + \int_{2T-\dl_n}^{2T} \ey |\dot{\bfu}^{\lmd_n}(s)|^2 + \frac{\mu}{|\bfu^{\lmd_n}(s)|} \,ds \le 2\left( \ey C_1^2 + \mu C_2 \right)\dl_n \le \ep_0.  
 \end{aligned}
 $$
Together with \eqref{eq;ep0} and \eqref{eq;XiLmdn}, they imply
\begin{equation*}
\label{eq;Abar-XiLmd-Ztbar}  
\begin{aligned}
\bar{\A}(\bar{\bfr}; -2T, 2T) & - \bar{\A}(\bfu^{\lmd_n}; -2T, 2T) \\
&  \ge  6\ep_0 -  \bar{\A}(\bfu^{\lmd_n}; -2T, -2T+\dl_n)- \bar{\A}(\bfu^{\lmd_n}; 2T-\dl_n, 2T) \ge 5 \ep_0.
\end{aligned}
\end{equation*}

Then \eqref{eq;Abar-Ztlmd-Ztbar} implies 
\begin{equation}
\label{eq;Almd-1} \bar{\A}(\bfr^{\lmd_n}; -2T, 2T) - \bar{\A}(\bfu^{\lmd_n}; -2T, 2T) \ge 4 \ep_0. 
\end{equation}

\emph{Step 2}: By Sundman's estimates,  
\begin{equation}
\label{eq;Zt-Dot-Upper-Bound} |\dot{\bfr}(t)| \le \left( \frac{9 \mu}{2} \right)^{\frac{1}{3}} |t|^{-\sy}, \text{ when $|t|$ is small enough.}  
\end{equation}
Meanwhile as $q_N \in \mathcal{C}^1$, 
$$ \max\{ |\dot{q}_N(t)|: t \in [-\dl, \dl] \} = C_3 < \infty. $$
Now by the definition of $\lmd_n$-blow up, 
$$ \begin{aligned}
\int_{-2T}^{2T}  |\dot{\bfr}^{\lmd_n}(s)| \cdot |\dot{q}_{N}^{\lmd_n}(s)| \,ds & = \lmd_n^{\sy} \int_{- \frac{2T}{\lmd_n}}^{ \frac{2T}{\lmd_n}} |\dot{\bfr}(t)| \cdot |\dot{q}_{N}(t)| \,dt \\
 & \le \lmd_n^{\sy} C_3 \left( \frac{9 \mu}{2} \right)^{\frac{1}{3}} \int_{- \frac{2T}{\lmd_n}}^{\frac{2T}{\lmd_n}} |t|^{-\sy}\,dt  \\
& = 3(2T)^{\se} C_3 \lmd_n^{-\sy} \left( \frac{9 \mu}{2} \right)^{\frac{1}{3}} \le \ep_0
\end{aligned}$$
for $n$ large enough. 

Notice that $\forall s \in [-2T, 2T]$, 
$$|\dot{q}^{\lmd_n}_{N}(s)| = \lmd_n^{-\sy} |\dot{q}_{N}(\lmd_n^{-1} s)| \le \lmd_n^{-\sy}C_3$$
Combining this with \eqref{eq;XiLmdn} and \eqref{eq;xilmd-upper-bound}, for $n$ large enough 
$$  \int_{-2T}^{2T}  |\dot{\bfu}^{\lmd_n}(s)| \cdot |\dot{q}_{N}^{\lmd_n}(s)| \,ds \le \lmd_n^{-\sy}C_3 \int_{-2T}^{2T}  |\dot{\bfu}^{\lmd_n}(s)| \,ds \le \lmd_n^{-\sy}C_4 \le \ep_0.$$

As a result, for $n$ large enough
\begin{equation}
\label{eq;Almd-2} \int_{-2T}^{2T}  |\dot{\bfr}^{\lmd_n}(s)| \cdot |\dot{q}_{N}^{\lmd_n}(s)| \,ds + \int_{-2T}^{2T}  |\dot{\bfu}^{\lmd_n}(s)| \cdot |\dot{q}_{N}^{\lmd_n}(s)| \,ds \le 2\ep_0.
\end{equation}

\emph{Step 3}: Recall that 
$$ \rho=\min \{|x(t) - q_i(t)|: \forall t \in [-\dl,  \dl] \text{ and } i \ne N \} >0. $$
Meanwhile by \eqref{eq;XiLmdn}, for any $n$ large enough,  
\begin{equation*}
\label{eq;eta-zt-ep}  |\bfv_n(t) - \bfr(t)| = |\bfv_n(t) +q_{N}(t) - x(t)|\le \rho/2, \;\; \forall t \in [-\dl, \dl]. 
\end{equation*}
This implies that for all $t \in [-\dl, \dl]$
$$ |\bfv_n(t) + q_{N}(t) - q_i(t)|  \ge |\bfr(t) +  q_{N}(t) - q_i(t)| - |\bfv_n(t) - \bfr(t)| \ge \rho/2.$$
As a result, for $n$ large enough, 
\begin{equation}
\label{eq;Almd-3} \begin{aligned}
\int_{-2T}^{2T}   \sum_{i =1}^{N-1} & \frac{m_i}{|\bfu^{\lmd_n}(s) + q^{\lmd_n}_{N}(s) - q^{\lmd_n}_i(s)|} \,ds \\
& = \lmd_n^{\sy} \int_{ -\frac{2T}{\lmd_n}}^{\frac{2T}{\lmd_n}} \sum_{i =1}^{N-1}  \frac{m_i}{|\bfv_n(t) + q_{N}(t) - q_i(t)|} \,dt \\
& \le \lmd_n^{\sy} \int_{ -\frac{2T}{\lmd_n}}^{+\frac{2T}{\lmd_n}} \frac{2(m -\mu)}{\rho } \,dt \\
& \le \frac{8(m-\mu)T}{\rho} \lmd_n^{-\se} \le \ep_0. 
\end{aligned}
\end{equation}

The claimed inequality \eqref{eq;AlmdXi<Zt}  now follows from \eqref{eq;Almd-1}, \eqref{eq;Almd-2} and \eqref{eq;Almd-3}. 
 

\section{A Marchal-type lemma for free-boundary problems}

This section is devoted to a proof of Theorem \ref{thm;Marchal-rest-2} and the approach again is a combination of the blow-up argument and a variational property of the Kepler problem given as below. 

\begin{prop}
\label{prop;rbar-not-minimizer-2} Let $\bar{\bfr}(s)$ be a parabolic collision-ejection solution of the Kepler problem as given in \eqref{eq;Kepler-Coll-Eject}, then for any  $\bs^{\pm} \in \mathbb S$ and $T>0$, 
\begin{enumerate}
    \item[(a).] if $ \bs \ne - \bs^+ $, there is a collision-free solution $\rt(t)$ of \eqref{eq;kepler} satisfying 
    $$ \rt(0)/|\rt(0)| = \bs, \; \rt(T) = \bar{\bfr}(T) \text{ and } \bar{\A}(\rt; 0, T) < \bar{\A}(\bar{\bfr}; 0, T). $$
    \item[(b).] if $ \bs \ne - \bs^- $, there is a collision-free solution $\rt(t)$ of \eqref{eq;kepler} satisfying 
        $$ \rt(0)/|\rt(0)| = \bs, \; \rt(-T) = \bar{\bfr}(-T) \text{ and } \bar{\A}(\rt; -T, 0) < \bar{\A}(\bar{\bfr}; -T, 0). $$

\end{enumerate}
\end{prop}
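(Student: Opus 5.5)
It suffices to prove (a), since (b) follows from (a) by the time reversal $\xi(t)\mapsto \xi(-t)$. This map preserves the Kepler equation \eqref{eq;kepler} and the action $\bar{\A}$, and it sends $\bar{\bfr}|_{[-T,0]}$ with direction $\bs^-$ to the half-ejection with direction $\bs^-$ on $[0,T]$. The plan is to show that the half-ejection $\bar{\bfr}|_{[0,T]}$ is not a minimizer of $\bar{\A}(\cdot;0,T)$ among paths $\xi$ with $\xi(T)=\bar{\bfr}(T)$ and $\xi(0)$ in the closed ray $R_{\bs}=\{\rho\bs:\rho\ge 0\}$. We then take $\rt$ to be a minimizer in this class.

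First, existence of a minimizer. The class is weakly closed in $H^1$ and $\bar{\A}$ is coercive and weakly lower semicontinuous on it, since the right endpoint is fixed. So a minimizer $\xi_*$ exists. If $\xi_*$ had a collision at an interior time, Proposition \ref{prop;coll-free} (or classical Marchal's lemma for the Kepler problem, i.e.\ Proposition \ref{prop;rbar-not-minimizer} applied after a blow-up) gives a contradiction. Hence $\xi_*$ is a classical solution of \eqref{eq;kepler} on $(0,T]$. If $\xi_*(0)\ne 0$, first variation on the ray gives $\dot\xi_*(0)\perp \bs$, and $\xi_*$ is collision-free with $\xi_*(0)/|\xi_*(0)|=\bs$. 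This is the desired $\rt$, provided the strict inequality holds.

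It therefore remains to exclude that the minimum is attained by a path starting at the origin, in particular by $\bar{\bfr}$ itself, and to obtain strict inequality. I would show directly that $\bar{\bfr}|_{[0,T]}$ can be strictly improved in this class when $\bs\ne-\bs^+$, which I expect to be the main obstacle. The first attempt is a small explicit deformation near $t=0$. By scaling invariance ($\xi\mapsto \lmd^{2/3}\xi(\cdot/\lmd)$ multiplies the action by $\lmd^{1/3}$), it suffices to compare, on a short interval $[0,\tau]$, the parabolic ejection arc from $0$ to $\bar{\bfr}(\tau)$ with a Kepler arc from a point $\rho\bs$ to $\bar{\bfr}(\tau)$. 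One could compute $\frac{d}{d\rho}$ of the minimal action at $\rho=0^+$. Heuristically the derivative equals minus the component of the momentum $\dot{\bar{\bfr}}(0^+)$ along $\bs$, which is $-\infty\cdot\langle\bs^+,\bs\rangle$. This works immediately when $\langle \bs,\bs^+\rangle>0$, but it fails to settle the case $\langle\bs,\bs^+\rangle\le 0$, $\bs\ne-\bs^+$.

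For the general case I would instead use Marchal's averaging idea restricted to half-arcs. Consider the family of Kepler arcs from $\rho\bs$ to $\bar{\bfr}(T)$. In the plane spanned by $\bs,\bs^+$, the set of directions $\bs$ with $\bs\neq -\bs^+$ is exactly the set of those not antipodal to the ejection direction. Here the known one-sided result, that the minimal action from a point at angle $\theta\ne\pi$ to the collision ejection is strictly below the parabolic ejection action (cf.\ \cite{Ch06} for the unrestricted free-boundary version, computed via Gordon's theorem and the Kepler action formula), applies. Concretely, I would reflect: glue $\xi$ on $[0,T]$ with its reflection across the line through $\bs$, obtaining a path on $[-T,T]$ with endpoints $\bar{\bfr}(T)$ and its mirror image. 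Then apply Proposition \ref{prop;rbar-not-minimizer} to the symmetric ejection $\bar{\bfr}$ with $\bs^-$ equal to the mirror of $\bs^+$. For this to be a genuine collision–ejection with a collision-free competitor symmetric under the reflection, I need $\bs^-\ne \bs^+$ or a suitable choice. When $\bs\neq -\bs^+$ the averaged competitor of Marchal can be taken reflection-symmetric, since averaging over the symmetric family of displacements preserves symmetry. Halving its action gives a competitor on $[0,T]$ with $\rt(0)$ on the line $\mathbb{R}\bs$. The hypothesis $\bs\ne-\bs^+$ is then used to ensure $\rt(0)$ lies on the positive ray after choosing the symmetric averaging displacement. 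Finally I replace this competitor by the minimizer $\xi_*$ above. It has strictly smaller action than $\bar{\bfr}$, so $\xi_*\ne\bar{\bfr}$, and it is the required $\rt$.
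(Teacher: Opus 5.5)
\textbf{Verdict: there is a genuine gap.} The paper itself gives no argument here; it imports the result from \cite{TV07,FGN11,Y15a,Chen17}. Your proposal does not supply the key step.

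Your reduction is fine. A minimizer $\xi_*$ of $\bar{\A}(\cdot;0,T)$ among paths with $\xi(0)\in R_{\bs}$ and $\xi(T)=\bar{\bfr}(T)$ exists and solves \eqref{eq;kepler} on $(0,T]$. If $\xi_*(0)=0$, it is a radial ejection arc reaching $\bar{\bfr}(T)$ at time $T$, hence equal to $\bar{\bfr}$; you should state this. So everything hinges on showing that $\bar{\bfr}|_{[0,T]}$ can be \emph{strictly} improved by paths starting on $R_{\bs}\setminus\{0\}$, and that is the missing step. Your derivative heuristic covers at most $\langle\bs,\bs^+\rangle>0$. Invoking ``the known one-sided result'' for $\theta\ne\pi$ assumes the statement.

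The reflection argument fails. Let $R$ be the reflection across $\rr\bs$, $\phi$ even, and $\hat{\bar{\bfr}}$ your glued collision path. A Marchal competitor $\hat{\bar{\bfr}}+\delta\phi$ is $R$-symmetric only if $R\delta=\delta$, i.e.\ $\delta\in\{\pm\rho\bs\}$. So the ``symmetric family'' is a two-point set, and Marchal's estimate, which rests on averaging $1/|\xi-\delta|$ over a circle or sphere of displacements, says nothing about it. For example, when $\bs=\bs^+$ the displacement $-\rho\bs$ pushes the path through the origin, so the two-point average is $+\infty$. Proposition~\ref{prop;rbar-not-minimizer} used as a black box gives a competitor with no symmetry, and cutting a non-symmetric competitor at $t=0$ does not put its initial point on $\rr\bs$. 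Even granting a symmetric improvement, you only get $\rt(0)\in\rr\bs\setminus\{0\}$ with undetermined sign. For $\langle\bs,\bs^+\rangle<0$ the easy improvement lives on the opposite half-line $-\bs$. The claim that $\bs\ne-\bs^+$ ``ensures the positive ray'' is the entire content of the proposition, and it is left unproved.

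The missing ingredient is a quantitative comparison on the ray itself. One route is as follows.
\begin{itemize}
\item Work in a plane containing $\bs$ and $\bs^+$, let $\theta\in[0,\pi)$ be their angle, and use Levi-Civita coordinates $\xi=w^2$.
\item Since $\frac12|\dot\xi|^2+\mu/|\xi|\ge\sqrt{2\mu/|\xi|}\,|\dot\xi|=2\sqrt{2\mu}\,|\dot w|$, with equality at zero energy, the arc $\bar{\bfr}|_{[0,s]}$ is the segment from $0$ to $w_s$. Here $w_s^2=\bar{\bfr}(s)$ and $|w_s|=(9\mu/2)^{1/6}s^{1/3}$, and the arc's action is $2\sqrt{2\mu}\,|w_s|$.
\item The ray $R_{\bs}$ lifts to a ray from the origin making angle $\theta/2<\pi/2$ with $w_s$. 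Its closest point to $w_s$ is nonzero and at distance $|w_s|\sin(\theta/2)$.
\item Running along that perpendicular segment at zero energy gives a parabolic Kepler arc from $R_{\bs}\setminus\{0\}$ to $\bar{\bfr}(s)$. Its action is smaller by $2\sqrt{2\mu}\,|w_s|(1-\sin(\theta/2))$, a gain of order $s^{1/3}$, and its duration is at most $3s\sin(\theta/2)$.
\item Linearly reparametrizing $\bar{\bfr}|_{[s,T]}$ to restore the total time $T$ costs only $O(s)$.
\end{itemize}
For $s$ small this is a fixed-time competitor with strictly smaller action, and your minimization step then yields $\rt$. This is exactly where the hypothesis $\bs\ne-\bs^+$, i.e.\ $\sin(\theta/2)<1$, enters.
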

This is again a well-known result and different proofs can be found in \cite{TV07}, \cite{FGN11}, \cite{Y15a} and \cite{Chen17}.

\begin{proof}[Proof of Theorem \ref{thm;Marchal-rest-2}]

The fact that $x(t)$ is collision-free, for all $t \in (t_1, t_2)$ follows directly from Theorem \ref{thm;Marchal-restricted}. 

(a). We will give the details for $X_1$ (the proof for $X_2$ is similar). By a contradiction argument, assume $x(t_1)$ is a collision. By what we have already proved it must be isolated. To simplify notation, there is no loss of generality to assume $t_1 =0$ and $x(0) = q_N(0)$, and following a similar setting from Section \ref{sec;blow-up}, let
$$ \bfr = x - q_N, \text{ and } \mu = m_N. $$
Again by Sundman's estimates (Proposition \ref{prop;Sundman}), there exists $\bs^{+} \in \mathbb S$, such that
$$ \lim_{t \to 0^{+}} \frac{\bfr(t)}{|\bfr(t)|} =  \lim_{t \to 0^{+}} \frac{x(t)-q_N(t)}{|x(t)-q_N(t)|} = \bs^{+}. $$
Let $\bar{\bfr}(t)$, $t \in [0, \infty)$, be a parabolic ejection solution of the Kepler problem. 

Fix a $\dl>0$ small enough, following the blow-up argument in Section \ref{sec;blow-up}, for any $\lmd >0$, we set
 $$ \bfr^{\lmd}(s) = \lmd^{\se} \bfr(\lmd^{-1}s) \; \text{ and } \;  \ql_i(s) = \lmd^{\se} q_i(\lmd^{-1} s), \;\; \forall s \in [0, \lmd \dl].
 $$
Then the following results can be proven by the same arguments used in Lemma \ref{lem:ScaleofAction} and \ref{lem:ztl-to-ztb}.
\begin{lem}
\label{lem:ScaleofAction-2} For any $\dl_1 \in (0, \dl]$,  $\A^{\lmd}(\bfr^{\lmd}; 0, \lmd \dl_1) = \lmd^{\frac{1}{3}} \A (\bfr; 0, \dl_1). $ 
\end{lem}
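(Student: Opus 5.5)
The proof is a direct change of variables, exactly as in Lemma \ref{lem:ScaleofAction}. First I would write out the one-sided version of the blown-up Lagrangian \eqref{eq;LagRes-blowup} on $[0,\lmd\dl]$, with $s=\lmd t$:
$$ L^{\lmd}(\bfr^{\lmd},\dot{\bfr}^{\lmd},s)=\ey|\dot{\bfr}^{\lmd}(s)+\dot q^{\lmd}_N(s)|^2+\frac{\mu}{|\bfr^{\lmd}(s)|}+\sum_{i=1}^{N-1}\frac{m_i}{|\bfr^{\lmd}(s)+q^{\lmd}_N(s)-q^{\lmd}_i(s)|}. $$
Here $\bfr^{\lmd}(s)=\lmd^{\se}\bfr(\lmd^{-1}s)$ and $q^{\lmd}_i(s)=\lmd^{\se}q_i(\lmd^{-1}s)$.

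Next I would check the scaling of each term separately.
\begin{itemize}
\item By the chain rule, $\dot{\bfr}^{\lmd}(s)=\lmd^{-\sy}\dot{\bfr}(t)$ and $\dot q^{\lmd}_N(s)=\lmd^{-\sy}\dot q_N(t)$. So the kinetic term is $\lmd^{-\se}\cdot\ey|\dot{\bfr}(t)+\dot q_N(t)|^2$.
\item The denominators scale linearly: $|\bfr^{\lmd}(s)|=\lmd^{\se}|\bfr(t)|$ and $|\bfr^{\lmd}+q^{\lmd}_N-q^{\lmd}_i|(s)=\lmd^{\se}|\bfr+q_N-q_i|(t)$. So every potential term also picks up the factor $\lmd^{-\se}$.
\end{itemize}
Hence $L^{\lmd}(\bfr^{\lmd}(s),\dot{\bfr}^{\lmd}(s),s)=\lmd^{-\se}L(\bfr(t),\dot{\bfr}(t),t)$ pointwise, for every $s\in(0,\lmd\dl_1]$.

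Finally I would integrate over $s\in[0,\lmd\dl_1]$ and substitute $ds=\lmd\,dt$, with $t$ ranging over $[0,\dl_1]$. This gives
$$\A^{\lmd}(\bfr^{\lmd};0,\lmd\dl_1)=\lmd^{1-\se}\A(\bfr;0,\dl_1)=\lmd^{\sy}\A(\bfr;0,\dl_1).$$

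I expect no real obstacle. The only point to note is the single possible collision at the endpoint $t=0$, where the integrand may blow up. The identity is between possibly improper integrals of nonnegative functions, and the change of variables holds for Lebesgue integrals of such functions. Moreover, $\A(\bfr;0,\dl_1)<\infty$ because $x$ is a minimizer with finite action, so both sides are finite and equal.
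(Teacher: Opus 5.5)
Your proposal is correct and matches the paper's argument: the paper proves Lemma~\ref{lem:ScaleofAction-2} exactly as it proves Lemma~\ref{lem:ScaleofAction}, by the pointwise identity $L^{\lmd}=\lmd^{-\se}L$ followed by the substitution $ds=\lmd\,dt$. Your remark that the possible collision at $t=0$ does no harm, since the integrands are nonnegative, is a valid extra detail that the paper leaves implicit.
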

 
\begin{lem}
\label{lem:ztl-to-ztb-2} For any $T>0$, as $\lmd$ goes to infinity, $\bfr^{\lmd}(s)$ converges uniformly to $\bar{\bfr}(s)$ on $[0, T]$, and $\dot{\bfr}^{\lmd}(s)$ converges uniformly to $\dot{\bar{\bfr}}(s)$ on any compact subset of $(0, T]$.

\end{lem}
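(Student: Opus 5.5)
The plan is to prove Lemma \ref{lem:ztl-to-ztb-2} by the direct computation behind Lemma \ref{lem:ztl-to-ztb}. The only difference is that we now use only the one-sided Sundman estimate, for $t\to 0^+$, supplied by Proposition \ref{prop;Sundman}. That estimate applies here: the collision at $t_1=0$ is isolated, and $x$ solves \eqref{eq;RNBP1} on $(0,\dl]$. Hence $\bfr$ is a Kepler ejection with bounded perturbation on $(0,\dl]$, which is exactly the setting of Proposition \ref{prop;Sundman}.

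For part (a), fix $s\in(0,T]$ and put $t=\lmd^{-1}s$, so $t\to 0^+$ uniformly as $\lmd\to\infty$. By Proposition \ref{prop;Sundman},
$$\bfr^{\lmd}(s)=\lmd^{\se}\Big(\big(\tfrac92\mu\big)^{\sy}\lmd^{-\se}s^{\se}\bs^+ + o\big((\lmd^{-1}s)^{\se}\big)\Big)=\bar{\bfr}(s)+s^{\se}\,o(1).$$
The $o(1)$ term is controlled by $\sup_{0<\tau\le T/\lmd}|\bfr(\tau)-(\frac92\mu)^{\sy}\tau^{\se}\bs^+|/\tau^{\se}$. This quantity tends to $0$ as $\lmd\to\infty$. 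Since $s^{\se}\le T^{\se}$, we get
$$\sup_{[0,T]}|\bfr^{\lmd}-\bar{\bfr}|\le T^{\se}\sup_{0<\tau\le T/\lmd}\Big|\frac{\bfr(\tau)}{\tau^{\se}}-\big(\tfrac92\mu\big)^{\sy}\bs^+\Big|\to 0.$$
At $s=0$ both curves vanish, so this gives uniform convergence on $[0,T]$.

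For part (b), differentiate: $\dot{\bfr}^{\lmd}(s)=\lmd^{-\sy}\dot{\bfr}(\lmd^{-1}s)$. The velocity part of Proposition \ref{prop;Sundman} then gives
$$\dot{\bfr}^{\lmd}(s)-\dot{\bar{\bfr}}(s)=s^{-\sy}\,\Big(\lmd^{-1}s\Big)^{\sy}\Big(\dot{\bfr}(\lmd^{-1}s)-\se\big(\tfrac92\mu\big)^{\sy}(\lmd^{-1}s)^{-\sy}\bs^+\Big).$$
The bracket, multiplied by $\tau^{\sy}$ with $\tau=\lmd^{-1}s$, tends to $0$ uniformly for $\tau\in(0,T/\lmd]$. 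On a compact set $K\subset(0,T]$ the factor $s^{-\sy}$ is bounded by $(\min K)^{-\sy}$. Hence the convergence is uniform on $K$. It cannot be uniform near $s=0$, because both velocities blow up there. This is why the statement is restricted to compact subsets of $(0,T]$.

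The step needing care is making sure Proposition \ref{prop;Sundman} really applies one-sidedly at the boundary moment $t_1$. Its proof only uses the perturbed Kepler equation \eqref{eq;r} on a one-sided punctured neighborhood, together with Sperling's estimates (7.11)--(7.12) in \cite{Sp69}, which are one-sided. We should still check that $x$ satisfies \eqref{eq;RNBP1} classically on $(0,\dl]$. This follows because $x$ is collision-free on $(t_1,t_2)$ by Theorem \ref{thm;Marchal-restricted} (applied to fixed-end subintervals). Also, $C_1$ from \eqref{eq;C1}--\eqref{eq;C2} bounds the perturbation, because the other primaries stay a distance at least $C_2^{-1}$ away on $[0,\dl]$. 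Once this is settled, the two estimates above are routine rescaling.
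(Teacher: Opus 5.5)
Your proposal is correct and follows the paper's route. The paper proves this lemma ``by the same arguments'' as Lemma~\ref{lem:ztl-to-ztb}, that is, the one-sided Sundman estimate of Proposition~\ref{prop;Sundman} followed by the rescaling computation you write out, and your check that the estimate applies at the boundary moment is appropriate.

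One correction: your aside that convergence of $\dot{\bfr}^{\lmd}$ ``cannot be uniform near $s=0$ because both velocities blow up'' is unjustified, since both terms blowing up does not stop their difference from tending to zero uniformly. It is also not needed. The restriction to compact subsets of $(0,T]$ only reflects that an $o(\tau^{-1/3})$ error yields the bound $s^{-1/3}\,o(1)$.
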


By condition $(1)$ in Definition \ref{dfn;free-boundary}, $X_1 \cap \mathbb S$ is either a ${\rm dim}(X_1)-1$ dimensional sphere or a set with two different points. In either case, we can always find a $\bs \in X_1 \cap \mathbb S$, such that $\bs \ne -\bs^+$.

Then for any fixed $T>0$, we can find a collision-free solution $\tilde{\bfr}|_{[0, T]}$ of the Kepler problem satisfying property (a) in Proposition \ref{prop;rbar-not-minimizer-2}. Now for each $n$ large enough, we define a new path 
\begin{equation}
\label{eq;XiLmdn-2} \bfu^{\lmd_n}(s) = \begin{cases}
\bfr^{\lmd_n}(s), \; & \text{ when } s \in [2T, \lmd_n\dl], \\
\frac{2T-s}{\dl_n} (\bar{\bfr}(s) - \bfr^{\lmd_n}(s)) + \bfr^{\lmd_n}(s), \; & \text{ when } s \in [2T- \dl_n, 2T], \\
\bar{\bfr}(s), \; & \text{ when } s \in [T, 2T- \dl_n], \\
\rt(s), \; & \text{ when } s \in [0, T].
\end{cases} 
\end{equation}
By a similar argument as in Section \ref{sec;blow-up}, we can show  
\begin{equation} \label{eq;act-u-r}
\A^{\lmd_n}(\bfu^{\lmd_n}; 0, \lmd_n\dl) < \A^{\lmd_n}(\bfr^{\lmd_n}; 0, \lmd_n\dl)\;\; \text{ for } n \text{ large enough}.
\end{equation}
Again for each $n$, we can define a $\bfv_n \in H^1_{0, \dl}(\rr^d)$ as below, 
$$ \bfv_n(t) = \lmd_n^{-\se} \bfu^{\lmd_n}(\lmd_n t). $$
Since the $\lmd_n$-blow-up of $\bfv_n$ is nothing but $\bfu^{\lmd_n}$, by \eqref{eq;XiLmdn-2}, 
$$ \frac{\bfv_n(0)}{|\bfv_n(0)|} = \frac{\tilde{\bfr}(0)}{|\tilde{\bfr}(0)|} =\bs;
$$
$$ \bfv_n(\dl) + q_N( \dl)= \lmd_n^{-\se} \bfr^{\lmd_n}( \lmd_n \dl) + q_N( \dl)=\bfr(  \dl) + q_N( \dl) = x( \dl) . $$
Meanwhile by \eqref{eq;act-u-r} and Lemma \ref{lem:ScaleofAction-2}  
$$ \begin{aligned}
\A(x; 0, \dl) & - \A(\bfv_n + q_N; 0, \dl) = \A(\bfr; 0, \dl)  - \A(\bfv_n; 0, \dl) \\
& = \lmd_n^{-\sy} \left( \A^{\lmd_n}(\bfr^{\lmd_n}; 0, \lmd_n\dl) - \A^{\lmd_n}(\bfu^{\lmd_n}; 0, \lmd_n\dl) \right)>0.
\end{aligned} $$
This is a contradiction to the fact that $x$ is a local minimizer in $B_{t_1, t_2}(X_1, X_2)$. 

The perpendicular property follows from the natural boundary condition obtained from the first variation $\delta_{\xi} \action$ of the action functional at $x$. More precisely, let $\xi \in B_{t_1, t_2}(X_1, X_2)$ be any admissible variation of $x$ which is supported in a small neighborhood of $t_1$ in $[t_1, t_2]$. The fact that $\dl_{\xi}\A(x)$ vanishes and $x$ satisfies \eqref{eq;RNBP1} implies 
$$ \lim_{t \to t_1^+} \dot{x}(t) \cdot \xi(t) = \dl_{\xi} \A(x) =0.
$$
As a result, $\dot{x}(t_1^+) \perp X_1$. Then a similar argument can show $\dot{x}(t^-_2) \perp X_2$.

(b). When $x(t_1)$ is collision-free, the perpendicular property follows the same argument as in the proof of property (a). Now let's assume $x(t_1) = q_{i_k}(t_1)$ is a collision. In this case as long as $\bs \ne -\bs^+$, where 
$$ \bs^+ = \lim_{t \to t_1^+} \frac{x(t)-q_{i_k}(t)}{|x(t) - q_{i_k}(t)|}, \;\; \bs = \frac{q_{i_{3-k}}(t_1)-q_{i_k}(t_1) }{|q_{i_{3-k}}(t_1)-q_{i_k}(t_1) |} $$
by the same argument as in the proof of property (a), we can always find a small enough variation of $x$ inside $B_{t_1, t_2}(X_1, X_2)$ whose action value is strictly smaller. 

(c). The proof is similar as property (b).

(d). When $x(t_1)$ is collision-free, the perpendicular property follows the same argument as in the proof of property (a). Now let's assume $x(t_1) = q_{i}(t_1)$ is a collision. Then as long as $\bs \ne -\bs^+$, where 
$$ \bs^+ = \lim_{t \to t_1^+} \frac{x(t)-q_{i}(t)}{|x(t) - q_{i}(t)|}, \;\; \bs = \frac{p -q_{i}(t_1) }{|p - q_{i}(t_1)|}, \;\; \forall p \in X_1 \setminus \{q_i(t_1)\} $$
by the same argument as in the proof of property (a), we can always find a small enough variation of $x$ inside $B_{t_1, t_2}(X_1, X_2)$ whose action value is strictly smaller. 

(e). The proof is similar as property (d).

\end{proof}

\section{Applications: unbounded solutions}  \label{sec:unbounded}

In this section we apply the Marchal-type lemmas to study unbounded solutions in the restricted problem, more precisely give proofs of Theorem \ref{thm;hyperbolic} and \ref{thm;parabolic}. We follow the idea given by Polimeni and Terracini \cite{PT24}, where they proved the existence of hyperbolic, parabolic and hyperbolic-parabolic solutions of the $N$-body problem in a uniform approach by introducing suitable renormalized action functionals. Throughout this section we always assume the motion of the primaries are bounded and collision-free.


\subsection{Hyperbolic solutions} \label{sub;hyp-sol} We will look for solutions $x: [t_0, \infty) \to \rr^d$ of \eqref{eq;RNBP1} satisfying 
\begin{equation}
    \label{eq;xt-Hy} x(t_0) =x_0 \text{ and }  x(t) = \xb(t) +\vph(t) + \xs, \;\;  \forall t \ge t_0,
\end{equation}
where $r_0$ is as defined in \eqref{eq;r0} and
 \begin{equation}
   \label{eq;xb-Hy} \xs=  x_0 - (\sqrt{2h} + r_0)\bs \; \text{ and } \;  \xb(t)  =  \sqrt{2h}(t -t_0 + 1)\bs + r_0 \bs.
 \end{equation}
 
\begin{prop}
    \label{prop;phi-sol-hp} Such an $x$ is a hyperbolic solution of \eqref{eq;RNBP1}, if and only if $\vph$ is a solution of  
    \begin{equation}
        \label{eq;RNBP-2} \ddot{\vph} = \nabla U(\xb + \vph + \xs, t) \text{ and } \vph(t_0)=0.
    \end{equation}
\end{prop}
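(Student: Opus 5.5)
The proof is a direct substitution, and I will organise it in three short steps. Two of them are computations; the third concerns what ``hyperbolic'' adds, and that is where the real content sits.

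\emph{Step 1: the equations coincide.} By \eqref{eq;xb-Hy}, $\xb$ is affine in $t$, so $\ddot{\xb}\equiv 0$, and $\xs$ is a constant vector. Hence for $x=\xb+\vph+\xs$ as in \eqref{eq;xt-Hy} we have $\ddot x=\ddot\vph$ wherever either side is defined. Also $\nabla_x U(x(t),t)=\nabla U(\xb(t)+\vph(t)+\xs,t)$, and the singular sets correspond: $x(t)=q_i(t)$ exactly when $\xb(t)+\vph(t)+\xs=q_i(t)$. So $x$ is a collision-free solution of \eqref{eq;RNBP1} on $[t_0,\infty)$ if and only if $\vph$ is a collision-free solution of the first equation in \eqref{eq;RNBP-2}.

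\emph{Step 2: the initial condition.} From \eqref{eq;xb-Hy}, $\xb(t_0)=(\sqrt{2h}+r_0)\bs$. Therefore
$$x(t_0)=(\sqrt{2h}+r_0)\bs+\vph(t_0)+x_0-(\sqrt{2h}+r_0)\bs=x_0+\vph(t_0),$$
so $x(t_0)=x_0$ is equivalent to $\vph(t_0)=0$.

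\emph{Step 3: the hyperbolic behaviour.} This step needs care, because \eqref{eq;RNBP-2} by itself says nothing about asymptotics. I would read the equivalence inside the class of perturbations the section is set up to work with, namely $\vph$ of sublinear growth with $\dot\vph\to 0$, or $\dot\vph\in L^2$, as in the renormalized-action framework of \cite{PT24}. In that class:
\begin{itemize}
\item $\dot x=\sqrt{2h}\,\bs+\dot\vph\to\sqrt{2h}\,\bs$, so $|\dot x|\to\sqrt{2h}>0$;
\item $|x(t)|\ge\sqrt{2h}\,t-o(t)\to\infty$.
\end{itemize}
So $x$ is hyperbolic in the sense of Chazy's list. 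Conversely, every hyperbolic solution with $x(t_0)=x_0$ and limit velocity $\sqrt{2h}\,\bs$ yields, via $\vph:=x-\xb-\xs$, a solution of \eqref{eq;RNBP-2}.

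The only point I expect to need attention is making this implicit function class explicit, so that the ``if'' direction genuinely delivers hyperbolicity rather than just a solution. Everything else is immediate from $\ddot{\xb}=0$ and the choice of $\xs$.
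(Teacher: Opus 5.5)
Your Steps 1--2 are exactly the ``direct computation'' the paper relies on ($\ddot{\xb}\equiv 0$, and $\xs$ is chosen so that $x(t_0)=x_0+\vph(t_0)$), so the proposal is correct and follows the paper's route. Step 3 rightly points out that the ``if'' direction yields hyperbolicity only for $\vph$ in a class such as $\D_0$, which the paper leaves implicit and effectively checks later in Proposition~\ref{prop;NormalMinCollFree-hyper}; in the $\dot{\vph}\in L^2$ version you should add that $|x(t)|\to\infty$ by Proposition~\ref{prop;Hardy-ineq}, so $\ddot{\vph}=\nabla U(x,t)\to 0$, hence $\dot{\vph}$ is uniformly continuous and square integrable on $[T,\infty)$ and therefore tends to $0$.
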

This follows from a direct computation. Moreover the above equation is the Euler-Lagrange equation of the following renormalized action functional
\begin{equation}
    \Ah(\vph) = \int_{t_0}^{\infty} \ey |\dot{\vph}|^2 + U(\xb + \vph + \xs, t) - U(\xb, t) \,dt.
\end{equation} 
We subtract $U(\xb, t)$ to make the value of improper integral finite.

The proper function space to study $\Ah$ is $\D_0$ defined as below 
$$ \D_0=\D_{0}^{1,2}:=\{ \vph \in H^1_{loc}([t_0, \infty), \rr^d): \vph(t_0)=0 \text{ and  } \int_{t_0}^{\infty} |\dot{\vph}|^2 \,dt < \infty \}
$$
with the following norm and inner product 
$$ \|\vph\|_{\D_0} = \left( \int_{t_0}^{\infty} |\dot{\vph} |^2 \,dt \right)^{\ey}, \;\; \la \vph, \psi \ra_{\D_0}: = \int_{t_0}^{\infty} \la \dot{\vph}, \dot{\psi} \ra \,dt. $$

\begin{prop}
    \label{prop;Hardy-ineq} $\D_0$ is a Hilbert space, such that given any $\vph \in \D_0$ 
    $$ |\vph(t)|  \le \|\vph\|_{\D_0} \sqrt{t-t_0 +1}, \;\; \forall t \ge t_0; $$
$$     \int_{t_0}^{\infty} \frac{|\vph|^2}{|t -t_0 +1|^2} \,dt \le 4 \int_{t_0}^{\infty} |\dot{\vph}|^2 \,dt. $$
\end{prop}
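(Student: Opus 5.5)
The plan is to prove the pointwise bound by Cauchy--Schwarz, Hardy's inequality by a one-dimensional integration by parts, and completeness by transporting it from $L^2$. For the pointwise bound, since $\vph\in H^1_{loc}$ with $\vph(t_0)=0$, we have $\vph(t)=\int_{t_0}^t\dot\vph\,ds$. Cauchy--Schwarz then gives
$$|\vph(t)|\le \sqrt{t-t_0}\,\|\vph\|_{\D_0}\le \sqrt{t-t_0+1}\,\|\vph\|_{\D_0}.$$

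For the Hilbert space structure, $\la\cdot,\cdot\ra_{\D_0}$ is bilinear and symmetric. It is positive definite because $\|\vph\|_{\D_0}=0$ forces $\dot\vph=0$ a.e., and then $\vph(t_0)=0$ gives $\vph\equiv0$. For completeness, I would use the linear map $\vph\mapsto\dot\vph$ from $\D_0$ to $L^2([t_0,\infty),\rr^d)$. It is an isometry, and it is onto, since any $g\in L^2$ is the image of $\vph(t)=\int_{t_0}^t g$. This $\vph$ lies in $H^1_{loc}$, because it is bounded on compact intervals and $g\in L^2_{loc}$. So $\D_0$ is isometric to $L^2$ and hence complete. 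Equivalently, for a Cauchy sequence $\vph_n$ the derivatives converge in $L^2$, the pointwise bound gives locally uniform convergence of $\vph_n$, and the limit lies in $\D_0$.

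For Hardy's inequality, set $\tau=t-t_0+1\in[1,\infty)$ and first take $\vph$ smooth with compact support in $[t_0,\infty)$ and $\vph(t_0)=0$. Then
$$\int_{t_0}^{T}\frac{|\vph|^2}{\tau^2}\,dt=\Big[-\frac{|\vph|^2}{\tau}\Big]_{t_0}^{T}+\int_{t_0}^T\frac{2\la\vph,\dot\vph\ra}{\tau}\,dt.$$
The boundary term at $t_0$ vanishes because $\vph(t_0)=0$, and the term at $T$ is nonpositive, so it can be dropped. Cauchy--Schwarz applied to the last integral gives $J\le 2J^{1/2}\|\dot\vph\|_{L^2}$, where $J=\int|\vph|^2/\tau^2$, and hence $J\le 4\|\vph\|_{\D_0}^2$.

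To pass to general $\vph\in\D_0$, I would not rely on density but apply the computation on $[t_0,T]$ directly; this works for absolutely continuous $\vph$. Since the upper boundary term is $\le 0$, we get $J_T\le 2J_T^{1/2}\|\dot\vph\|_{L^2}$ with $J_T$ finite, so $J_T\le 4\|\vph\|^2_{\D_0}$. Monotone convergence as $T\to\infty$ then finishes. The only mild subtlety is this step: $J_T$ must be finite before dividing by $J_T^{1/2}$. That holds because $\vph$ is continuous on $[t_0,T]$ and $\tau\ge1$.
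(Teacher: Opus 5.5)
Your proof is correct and complete. Cauchy--Schwarz gives the pointwise bound (in fact with the sharper factor $\sqrt{t-t_0}$), and the isometry $\vph\mapsto\dot\vph$ onto $L^2([t_0,\infty),\rr^d)$ gives completeness. For the Hardy inequality, integrating $|\vph|^2/\tau^2$ by parts on $[t_0,T]$, discarding the nonpositive boundary term at $T$, and checking $J_T<\infty$ before dividing yields the constant $4$. The paper gives no argument of its own here and simply cites \cite{BDFT21}, so your version is a self-contained replacement for that citation rather than a comparison with a proof in the paper.
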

A detailed proof of this proposition can be found in \cite{BDFT21}. 
\begin{prop}
    \label{prop;MiniHyper:label}
    $\Ah$ has a global minimizer $\vph(t)$ in $\D_0$. 
\end{prop}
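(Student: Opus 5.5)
The plan is the direct method on the Hilbert space $\D_0$: show that $\Ah$ is well defined and finite on $\D_0$, coercive, and weakly lower semicontinuous, and then extract a minimizer from a minimizing sequence.

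\emph{Finiteness.} Split $\Ah(\vph)$ into the kinetic part $\ey\|\vph\|_{\D_0}^2$ and the potential difference
\[
P(\vph)=\int_{t_0}^\infty \big(U(\xb+\vph+\xs,t)-U(\xb,t)\big)\,dt .
\]
The positive term $U(\xb+\vph+\xs,t)$ may be singular, but it is nonnegative, so $P$ is bounded below by $-\int U(\xb,t)$ restricted to a compact interval, plus a tail controlled below. The real issue is the behaviour at infinity. By \eqref{eq;xb-Hy} and \eqref{eq;r0}, we have $|\xb(t)-q_i(t)|\ge \sqrt{2h}(t-t_0+1)$. Proposition~\ref{prop;Hardy-ineq} gives $|\vph(t)|\le \|\vph\|_{\D_0}\sqrt{t-t_0+1}$, so for $t\ge T(\|\vph\|_{\D_0})$ the shifted path stays at distance $\gtrsim t$ from all primaries. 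On that range a mean value estimate gives
\[
|U(\xb+\vph+\xs,t)-U(\xb,t)|\le C\,\frac{|\vph|+|\xs|}{(t-t_0+1)^2}.
\]
The $|\xs|$ part is integrable. For the $|\vph|$ part, Cauchy--Schwarz combined with the Hardy inequality of Proposition~\ref{prop;Hardy-ineq} bounds its integral by $C\|\vph\|_{\D_0}$, since $(t-t_0+1)^{-1}\in L^2$. So $\Ah$ is well defined with values in $(-\infty,+\infty]$, and it is finite at $\vph=0$.

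\emph{Coercivity.} The key point is that $T(\|\vph\|)$ depends on the norm. To handle this I would use a cruder but uniform lower bound. Since $U\ge 0$,
\[
\Ah(\vph)\ge \ey\|\vph\|_{\D_0}^2-\int_{t_0}^{t_0+S}U(\xb,t)\,dt-\int_{t_0+S}^\infty |U(\xb+\vph+\xs,t)-U(\xb,t)|\,dt .
\]
I would pick $S$ proportional to $\|\vph\|_{\D_0}^2$, so that $\sqrt{t-t_0+1}\,\|\vph\|\le \tfrac{\sqrt{2h}}{2}(t-t_0+1)$ for $t\ge t_0+S$. Then:
\begin{itemize}
\item the first integral is $O(\log(1+S))$, because $U(\xb,t)\le m/(\sqrt{2h}(t-t_0+1))$;
\item the tail is $O(1+\|\vph\|)$ by the estimate above.
\end{itemize}
Hence $\Ah(\vph)\ge \ey\|\vph\|^2-C\log(2+\|\vph\|)-C\|\vph\|-C$, which tends to $+\infty$ as $\|\vph\|_{\D_0}\to\infty$. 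Minimizing sequences are therefore bounded in $\D_0$.

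\emph{Lower semicontinuity and conclusion.} For a bounded minimizing sequence, pass to a subsequence converging weakly in $\D_0$, hence uniformly on compact intervals (by the pointwise bound in Proposition~\ref{prop;Hardy-ineq} applied to differences, together with compact embedding on bounded intervals). The kinetic part is weakly lsc. On $[t_0,t_0+S]$, with $S$ chosen uniform for the bounded sequence, Fatou's lemma applies to the nonnegative $U(\xb+\vph_n+\xs,t)$. The tail differences converge by dominated convergence, using the uniform domination $C(|\vph_n|+1)/(t-t_0+1)^2\le C'(t-t_0+1)^{-3/2}$. The limit is therefore a global minimizer.

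I expect the main obstacle to be the coercivity step, specifically making the far-field estimate uniform on norm-bounded sets despite the $\sqrt t$ growth allowed for $\vph$. Any collisions of the minimizer are irrelevant here; they are handled afterwards via Theorem~\ref{thm;Marchal-restricted}.
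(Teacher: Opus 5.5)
Your proposal is correct and is essentially the paper's proof. The lower semicontinuity step is the same: Fatou on $[t_0,T]$, and dominated convergence on $[T,\infty)$ with a $(t-t_0+1)^{-3/2}$ majorant, with $T$ uniform for the bounded sequence. For coercivity the paper avoids your norm-dependent cutoff $S$ by using the global one-sided bound
\[
U(\xb,t)-U(x,t)\le \frac{m}{\sqrt{2h}(t-t_0+1)}-\frac{m}{\sqrt{2h}(t-t_0+1)+\|\vph\|_{\D_0}\sqrt{t-t_0+1}+C},
\]
valid for all $t\ge t_0$. It needs only an upper bound on $|x-q_i|$, so no control of the distance to the primaries, and it integrates to $O(\ln\|\vph\|_{\D_0})$.

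One small slip: $\Ah(0)$ can be $+\infty$. Indeed $\vph=0$ gives the straight line $x_0+\sqrt{2h}(t-t_0)\bs$, which may pass through a primary; for example, if $x_0=q_i(t_0)$ then $U\ge c|t-t_0|^{-1}$ near $t_0$. To guarantee $\inf\Ah<\infty$ you should exhibit another finite-action path. For instance, take one that leaves $q_i$ like $(t-t_0)^{2/3}$ when $x_0$ is a collision, avoids the primaries on a bounded interval (possible since $d\ge 2$), and coincides with $\xb+\xs$ afterwards.
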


\begin{proof}
    By a standard argument, it is enough to show $\Ah$ is both coercive and weakly lower semi-continuous in $\D_0$. For coercivity, it is enough to show that $\Ah(\vph) \to \infty$, if $\|\vph\|_{\D_0} \to \infty$. By \eqref{eq;r0}, \eqref{eq;xt-Hy}, \eqref{eq;xb-Hy} and Proposition \ref{prop;Hardy-ineq}, for any $t \ge t_0$, 
$$ |\xb(t) -q_i(t)| \ge |\xb(t)| - |q_i(t)| \ge \sqrt{2h}(t-t_0 +1);$$
$$ 
 |\xb(t) -q_i(t) + \vph(t) + \xs| \le \sqrt{2h}(t-t_0 +1) + \|\vph\|_{\D_0}\sqrt{t- t_0+1} + C_1,
$$
throughout this section $C_i$, $i =1, 2, \dots$, will be positive constants that vary from line to line, but always independent of $\vph$ and $\vph_n$.  

As a result, for any $t >t_0$, 
$$ 
  U(\xb, t) - U(x, t) \le  \frac{m}{\sqrt{2h}(t -t_0 +1)} - \frac{m}{\sqrt{2h}(t-t_0 +1) + \|\vph\|_{\D_0}\sqrt{t-t_0+1} + C_1}. 
$$ 
We first make a change of the time variable: $t -t_0 +1= s^2$ with $s > 0$,  
$$ \int_{t_0}^{\infty} U(\xb, t) - U(x, t) \,dt \le \frac{2m}{\sqrt{2h}} \int_1^{\infty} \left( \frac{1}{s^2  } - \frac{1}{s^2 + \frac{\|\vph\|_{\D_0}}{\sqrt{2h}}s + \frac{C_1}{\sqrt{2h}}} \right) s\,ds. $$
Then make a further change by $\tau = \frac{2\sqrt{2h}}{\|\vph\|_{\D_0}}s$. As a result, when $\|\vph\|_{\D_0}$ is large enough 
$$
\begin{aligned}
    \int_{t_0}^{\infty} U(\xb, t) - U(x, t) \,dt & \le \frac{2m}{\sqrt{2h}} \int_{\frac{2\sqrt{2h}}{\|\vph\|_{\D_0}}}^{\infty}  \frac{1}{\tau} - \frac{\tau}{\left(\tau + 1 \right)^2} \,d\tau.
\end{aligned}
$$

Meanwhile notice that for any $0 < \ep <1$, 
$$ \int_{\ep}^1  \frac{1}{\tau} - \frac{\tau}{(\tau+1)^2} \,d\tau = \int_{\ep}^{1} \frac{2 + \tau^{-1}}{(\tau + 1)^2} \,d\tau \le \int_{\ep}^1 2 +\tau^{-1} \,d\tau \le 2 -\ln \ep;
$$
$$ \int_1^{\infty}  \frac{1}{\tau} - \frac{\tau}{(\tau+1)^2} \,d\tau =  \int_{1}^{\infty} \frac{2 +\tau^{-1}}{(\tau+1)^2} \,d\tau \le \int_1^{\infty} \frac{3}{\tau^2} \,d\tau = 3.
$$
Hence for $ \|\vph\|^2_{\D_0} $ large enough,   
$$ 
 \frac{\sqrt{2h}}{2m} \int_{t_0}^{\infty} U(\xb, t) - U(x, t) \,dt \le \int_{\frac{2\sqrt{2h}}{\|\vph\|_{\D_0}}}^{1} + \int_1^{\infty}  \frac{1}{\tau} - \frac{\tau}{(\tau+1)^2} \,d\tau \le \ln \|\vph\|_{\D_0} + 5 - \ln 2\sqrt{2h}.
$$
This implies the coercivity of $\Ah$, as when $\|\vph\|_{\D_0} \to \infty$, 
$$ \Ah(\vph) \ge \ey \|\vph\|^2_{\D_0} - \ln \|\vph\|_{\D_0} -5 + \ln 2 \sqrt{2h} \to \infty. $$

To prove the weakly lower semi-continuity of $\Ah$. Let's choose a bounded sequence $\{\vph_n \}$ in $\D_0$, i.e., $\|\vph_n\|_{\D_0} \le C_2$ for all $n$, which converges weakly to a $\vph \in \D_0$. By passing to a proper subsequence, $\vph_n(t)$ will converge to $\vph(t)$ pointwisely for almost every $t \in (t_0, \infty)$ and uniformly on $[t_0, T]$ for any fixed $T>t_0$. We may further assume $\{\Ah(\vph_n) \}$ is bounded as well.  Let 
$$ x_n= \xb + \vn + \xs, \; x = \xb + \vph + \xs. $$
For any $s \in [0, 1]$, by Proposition \ref{prop;Hardy-ineq},  
\begin{equation}
    \begin{aligned}
        |\xb - q_i + s(\vph_n + \xs)| & \ge |\xb - q_i| - s|\vph_n| - s |\xs| \\
        & \ge \sqrt{2h}(t-t_0 + 1) -s C_2\sqrt{t-t_0+1} - s |\xs|>0 
    \end{aligned}
\end{equation}
for any $t \ge T$, when $T$ is large enough. This allows us to write
\begin{equation*}
\label{eq;Ui-dff-ubd} 
    \frac{1}{|x_n-q_i|} - \frac{1}{|\xb-q_i|} = \int_0^1 \frac{d}{ds} \frac{1}{|\xb-q_i +s(\vph_n +\xs)|} \,ds  = - \int_0^1 \frac{\la \xb -q_i +s(\vph_n + \xs), \vph_n + \xs \ra}{|\xb-q_i +s(\vph_n +\xs)|^{3}} \,ds.
\end{equation*}
As a result, 
\begin{equation}
\label{eq;U-diff-ubd} \begin{aligned}
  |U(x_n, t)- U(\xb, t)| & \le \sum_i m_i \int_0^1 \frac{|\vph_n + \xs|}{|\xb-q_i +s(\vph_n +\xs)|^{2}} \,ds \\
  & \le m \int_0^1 \frac{C_2 \sqrt{t-t_0 +1} + |\xs|}{(\sqrt{2h}(t -t_0 +1) -sC_2\sqrt{t-t_0 +1} - s|\xs|)^2} \,ds\\
  & \le \frac{m}{|t-t_0+1|^{\frac{3}{2}}} \int_0^1 \frac{C_2 + \frac{|\xs|}{\sqrt{t-t_0 +1}}}{\left[ \sqrt{2h} - s\left( \frac{C_2}{\sqrt{t-t_0 +1}} + \frac{|\xs|}{t-t_0 +1} \right) \right]^2} \,ds \\
  & \le C_3(t-t_0+1)^{-\frac{3}{2}}.
\end{aligned}
\end{equation}
The last inequality holds, as the last integral is finite and has a uniform upper bound, for all $t \ge T$. Since $ \int_{T}^{\infty} (t-t_0+1)^{-\frac{3}{2}} \,dt$ is finite, by the dominated convergence theorem 
$$ \lim_{n \to \infty} \int_{T}^{\infty} U(x_n, t)- U(\xb, t) \,dt = \int_T^{\infty} U(x, t) - U(\xb, t) \,dt < \infty. $$

Again by the above estimates,   
$$ \int_{t_0}^T U(x_n, t) - U(\xb, t) \,dt \le \Ah(\vn) - \int_{T}^{\infty} U(x_n, t) -U(\xb, t)\,dt \le C_4. $$
Then
$$ \int_{t_0}^T U(x_n, t)\,dt  \le C_4 + \int_{t_0}^{T}U(\xb, t) \,dt  \le C_4 + \int_{t_0}^T \frac{m}{\sqrt{2h}(t-t_0+1)} \,dt  \le C_5. 
 $$ 
Since $U(x_n, t)$ is always positive, by Fatou's lemma, 
$$ \int_{t_0}^{T} U(x, t) \,dt \le \liminf_{n \to \infty} \int_{t_0}^T U(x_n, t)\,dt. $$
The weakly lower semi-continuity of $\Ah$ now follows from the above estimates and the convexity of $|\dot{\vph}_n|^2$. 

\end{proof}

\begin{prop}
\label{prop;NormalMinCollFree-hyper} If $\vph \in \D_0$ is a minimizer of $\Ah$, then $x =\xb +\vph +\xs$ is a fixed-end minimizer of $\A$ for any $t_1 < t_2$, and a collision-free solution of \eqref{eq;RNBP1}. Moreover  
$$ \vph(t) = \frac{m}{2h} \bs \ln t  + o(\ln t), \;\; t \to \infty. $$  

\end{prop}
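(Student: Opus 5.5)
The plan has three parts: the fixed-end minimizing property, the absence of collisions, and the logarithmic asymptotics.

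\emph{Fixed-end minimizing property.} Fix $t_0 \le t_1 < t_2$. Take any $y \in H^1_{t_1,t_2}(\rr^d)$ with $y(t_i)=x(t_i)$. Define $\psi$ to equal $y-\xb-\xs$ on $[t_1,t_2]$ and $\vph$ elsewhere. Then $\psi \in \D_0$, since it differs from $\vph$ only on a compact interval. The difference $\Ah(\psi)-\Ah(\vph)$ equals $\A(y;t_1,t_2)-\A(x;t_1,t_2)$, because the subtracted term $U(\xb,t)$ cancels on $[t_1,t_2]$. Global minimality of $\vph$ therefore gives $\A(x;t_1,t_2)\le \A(y;t_1,t_2)$.

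\emph{Collision-freeness.} Theorem \ref{thm;Marchal-restricted} applied on every $[t_1,t_2]$ shows $x$ is collision-free on each open interval $(t_1,t_2)$ with $t_1 \ge t_0$. Hence $x$ is a classical solution of \eqref{eq;RNBP1} on $(t_0,\infty)$, which is the same as saying $\vph$ solves \eqref{eq;RNBP-2}. The only possible collision is at the endpoint $t_0$, where $x(t_0)=x_0$ is prescribed; that is consistent with the remark following Theorem \ref{thm;Marchal-restricted}.

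\emph{Asymptotics.} Write $\vph = \frac{m}{2h}\bs\ln t + w$. The equation is
$$\ddot\vph = \nabla U(x,t) = -\sum_i m_i\frac{x-q_i}{|x-q_i|^3}.$$
By Proposition \ref{prop;Hardy-ineq} we have $|\vph(t)|\le C\sqrt{t}$, so $x-q_i = \sqrt{2h}\,t\,\bs + O(\sqrt t)$. This gives
$$\ddot\vph = -\frac{m}{2h\,t^2}\bs + O(t^{-5/2}).$$

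The sign needs checking. The paper's conventions give $\ddot{\vph} = -\frac{m}{2ht^2}\bs$, and integrating twice yields $\vph \approx \frac{m}{2h}\bs\ln t$ only if $\dot\vph \to 0$ at infinity. So the key step is to show $\dot\vph(t)\to 0$ as $t\to\infty$. The limit $\lim\dot\vph = c$ exists because $\ddot\vph$ is integrable on $[T,\infty)$, and $c$ must be $0$ since $\int|\dot\vph|^2<\infty$. Then
$$\dot\vph(t) = \int_t^\infty\Big(\frac{m}{2hs^2}\bs + O(s^{-5/2})\Big)ds = \frac{m}{2ht}\bs + O(t^{-3/2}),$$
and integrating from a fixed $T$ gives $\vph(t) = \frac{m}{2h}\bs\ln t + O(1)$, which is stronger than the claimed $o(\ln t)$.

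The main obstacle I expect is making the remainder estimate uniform. That requires $|x-q_i|\ge \sqrt{2h}t/2$ for large $t$, which follows from the Hardy bound $|\vph|\le\|\vph\|_{\D_0}\sqrt{t-t_0+1}$ together with boundedness of $q$ via \eqref{eq;r0}. The estimate itself is a Taylor expansion of $y\mapsto y/|y|^3$ at $\sqrt{2h}t\bs$, giving error $O(\sqrt t/t^3)$. Combined with the expression \eqref{eq;xb-Hy} for $\xb$, this produces the expansion \eqref{eq;hyp}.
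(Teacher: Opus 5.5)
Your proposal is correct and follows the paper's proof. The paper uses the same cut-and-paste comparison path $\psi$ for fixed-end minimality, Theorem \ref{thm;Marchal-restricted} for collision-freeness on $(t_0,\infty)$, and twice-integration of $\ddot\vph \approx -\frac{m}{2h}t^{-2}\bs$ for the asymptotics. Your version is somewhat more careful than the paper's, which only records $\lim_{t\to\infty} t^2\ddot\vph(t) = -\frac{m}{2h}\bs$: you state explicitly that $\dot\vph\to 0$ because $\dot\vph\in L^2$, and you use the Hardy bound $|\vph|\le C\sqrt{t}$ to get the quantitative error $O(t^{-5/2})$, which yields the sharper remainder $O(1)$ in place of $o(\ln t)$.
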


\begin{proof}
    By a contradiction argument, let's assume there is a $y \in H^1([t_1, t_2], \rr^d)$ satisfying $y(t_i)=x(t_i)$, $i =1, 2$, and $\A(y; t_1, t_2) < \A(x; t_1, t_2)$. Then the following path $\psi$ belongs to $\D_0$
    $$ \psi(t) = \begin{cases} \vph(t), &  \text{ if } t \notin [t_1, t_2]; \\
    y(t) - \xb(t) -\xs, & \text{ if } t \in [t_1, t_2].  
    \end{cases}
    $$
    Then we get the following inequality, which is a contradiction to the fact that $\vph$ is a minimizer
    $$  \Ah(\vph) -\Ah(\psi) = \A(x; t_1, t_2) - \A(y; t_1, t_2) > 0.$$
    By Theorem \ref{thm;Marchal-restricted}, $x$ is a collision-free solution of \eqref{eq;RNBP1} and $\vph(t)$ is a solution of \eqref{eq;RNBP-2}. Hence
     $$ \begin{aligned}
         \lim_{t \to \infty} t^2 \ddot{\vph}(t) & = -\lim_{t \to \infty} t^2 \sum_i \frac{m_i(\xb(t) -q_i(t) + \vph(t) + \xs)}{|\xb(t) -q_i(t) + \vph(t) + \xs|^3}\\
         & = -\frac{1}{2h} \lim_{t \to \infty} \sum_i \frac{m_i \left(\bs + \frac{r_0 - \sqrt{2h}t_0 + \sqrt{2h}}{\sqrt{2h}t} + \frac{\vph(t)+ \xs-q_i(t)}{\sqrt{2h}t} \right)}{ \left|\bs + \frac{r_0 - \sqrt{2h}t_0 + \sqrt{2h}}{\sqrt{2h}t} + \frac{\vph(t)+ \xs-q_i(t)}{\sqrt{2h}t}  \right|^3} \\
         & = -\frac{m}{2h}\bs.
     \end{aligned}
     $$
     This implies the following estimate and the desired result follows sequentially
     $$ \ddot{\vph}(t) = -\frac{m}{2h}t^{-2} \bs +o(t^{-2}), \; \text{ as } t \to \infty. $$ 
  \end{proof}
  Theorem \ref{thm;hyperbolic} now follows directly from the previous three propositions.

\subsection{Parabolic solutions} \label{sub;para-sol}
We will search for solutions $x: [t_0, \infty) \to \rr^d$ of \eqref{eq;RNBP1} satisfying 
\begin{equation}
    \label{eq;xt} x(t_0) =x_0 \text{ and } x(t) = \xb(t) +\vph(t) + \xs, \;\; \forall t \ge t_0, 
\end{equation}
where  
\begin{equation}
    \label{eq;xb-Pa} \xs=  x_0 - (\al +r_0)\bs \; \text{ and} \; \xb(t)  =  \al(t-t_0 +1)^{\se} \bs + r_0 \bs. 
\end{equation}

\begin{prop}
    \label{prop;phi-sol-pa} Such an $x$ is a parabolic solution of \eqref{eq;RNBP1}, if and only if $\vph$ is a solution of  
    \begin{equation}
        \label{eq;RNBP-3} \ddot{\vph} = \nabla U(\xb + \vph + \xs, t) -\ddot{\xb} \text{ and } \vph(t_0)=0.
    \end{equation}
\end{prop}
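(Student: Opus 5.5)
The plan is to substitute $x=\xb+\vph+\xs$ into \eqref{eq;RNBP1} and compare derivatives, in the same way as for Proposition \ref{prop;phi-sol-hp}. The one new feature is that $\xb$ is no longer linear in $t$, so $\ddot{\xb}\neq 0$.

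\emph{Equivalence of the equations.} Since $\xs$ is a constant vector, $\ddot{x}=\ddot{\xb}+\ddot{\vph}$ wherever both sides are defined. Hence $\ddot{x}=\nabla_x U(x,t)$ holds if and only if $\ddot{\vph}=\nabla U(\xb+\vph+\xs,t)-\ddot{\xb}$.

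\emph{Initial condition.} From \eqref{eq;xb-Pa}, $\xb(t_0)=\al\bs+r_0\bs$. Therefore
$$x(t_0)=(\al+r_0)\bs+\vph(t_0)+x_0-(\al+r_0)\bs=x_0+\vph(t_0),$$
so $x(t_0)=x_0$ exactly when $\vph(t_0)=0$. This settles the correspondence between solutions of \eqref{eq;RNBP1} with $x(t_0)=x_0$ and solutions of \eqref{eq;RNBP-3}.

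\emph{Parabolicity.} This is the only step that needs care: we must make sure the decomposition really yields parabolic motion. The key computation is
$$\ddot{\xb}=-\tfrac{2}{9}\al(t-t_0+1)^{-4/3}\bs=-\frac{m}{\al^2}(t-t_0+1)^{-4/3}\bs,$$
using $\al^3=9m/2$. This is exactly the Kepler parabolic acceleration $-m\,\xb/|\xb|^3$ up to lower order terms. Consequently the right-hand side of \eqref{eq;RNBP-3} is $O(t^{-5/3})$ whenever $\vph=o(t^{2/3})$.

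In the intended use of the proposition, $\vph$ lies in a space of type $\D_0$. Then Proposition \ref{prop;Hardy-ineq} gives $|\vph(t)|\le C\sqrt{t-t_0+1}$, so $x(t)=\al t^{2/3}\bs+O(t^{1/2})$. It follows that $|x|\to\infty$, and $\dot{x}=\dot{\xb}+\dot{\vph}\to 0$ because $\dot{\xb}=O(t^{-1/3})$ and, after integrating the $O(t^{-5/3})$ bound on $\ddot{\vph}$, $\dot{\vph}\to 0$.

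\emph{Converse.} If $x$ is a parabolic solution with $x(t_0)=x_0$, define $\vph=x-\xb-\xs$. By the computation in the first step, $\vph$ solves \eqref{eq;RNBP-3}.

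The argument is a direct computation. The only point to be careful about is the meaning of ``parabolic'': it should be understood in the sense of the function class in which $\vph$ is sought, namely with $\vph$ of sublinear growth relative to $t^{2/3}$. That is where the estimates above enter.
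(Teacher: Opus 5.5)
Your argument is correct in substance and is the same direct substitution the paper relies on; the paper gives no proof of this proposition beyond the ``direct computation'' remark made for Proposition~\ref{prop;phi-sol-hp}. You are also right that the ``if'' direction yields a parabolic $x$ only under a growth condition on $\vph$, such as $\vph\in\D_0$, since every solution $x$ with $x(t_0)=x_0$ (hyperbolic, bounded, etc.) produces a solution of \eqref{eq;RNBP-3} with $\vph(t_0)=0$. This is a fair sharpening of the paper's loose wording.

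There is one slip in your parabolicity step. The hypothesis $\vph=o(t^{2/3})$ does not give an $O(t^{-5/3})$ bound on the right-hand side of \eqref{eq;RNBP-3}. The term $\nabla U(\xb+\vph+\xs,t)-\nabla U(\xb,t)$ is only of size $(|\vph|+|\xs|)\,t^{-2}$. That is $o(t^{-4/3})$ in general, and $O(t^{-3/2})$ under the $\D_0$ bound $|\vph|\le C\sqrt{t}$. The remaining term $\nabla U(\xb,t)-\ddot\xb$ is $O(t^{-2})$, cf.\ \eqref{eq;ddot-xb-upb}. An $O(t^{-5/3})$ bound would need $\vph=O(t^{1/3})$.

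Either correct bound is still integrable, so $\dot\vph$ has a limit as $t\to\infty$. You should then state explicitly that this limit is $0$, because $\vph=o(t)$ (or because $\int|\dot\vph|^2<\infty$). Integrating $\ddot\vph$ alone only gives convergence of $\dot\vph$, not convergence to zero.
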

This is the Euler-Lagrange equation of the following renormalized action functional
$$ \Ap(\vph) = \int_{t_0}^{\infty} \ey |\dot{\vph}|^2 + U(x, t) - U(\xb, t) - \la \ddot{\xb}, \vph \ra \,dt. $$

The proof of coercivity is more involved here. To get needed estimates, we follow \cite{PT24} and set 
\begin{equation} \label{eq;ui-vi}
 u_i = |\xb-q_i|^2, \; v_i  = |\xb-q_i + \vph + \xs|^2 - u_i, \; \forall i=1, \dots, N.
\end{equation} 
By Taylor's expansion, 
\begin{equation}
\begin{aligned}
\frac{1}{|x -q_i|} &- \frac{1}{|\xb -q_i|}  = (u_i + v_i)^{-\ey}- u_i^{-\ey}= -\ey u_i^{-\frac{3}{2}}v_i + R(u_i, v_i)\\
& =-\ey \frac{|\vph|^2}{|\xb -q_i|^3} - \ey \frac{|\xs|^2}{|\xb -q_i|^3} - \frac{ \la \xb -q_i, \vph\ra }{|\xb -q_i|^3} - \frac{ \la \xb -q_i, \xs \ra }{|\xb -q_i|^3} - \frac{\la \vph, \xs \ra}{|\xb -q_i|^3} + R(u_i, v_i)
\end{aligned}
\end{equation}
where 
\begin{equation}
\label{eq;R-uivi} R(u_i, v_i) = \int_0^1 \int_0^1 \frac{3}{4} t_2(u_i + t_1 t_2 v_i)^{-\frac{5}{2}}v_i^2 \,dt_1  \,dt_2 \ge 0. 
\end{equation}

As a result, 
\begin{equation}
    \label{eq;Ux-Uxb} \begin{aligned}
         U(x, t) &- U(\xb, t)  - \la \ddot{\xb}, \vph \ra = \sum_i m_i R(u_i, v_i) -\ey \sum_{i} \frac{m_i |\vph|^2}{|\xb -q_i|^3} - \sum_{i} \frac{m_i\la \vph, \xs \ra}{|\xb -q_i|^3}  \\
         & - \left( \sum_{i} \frac{m_i \la \xb -q_i, \vph \ra}{|\xb -q_i|^3} + \la \ddot{\xb}, \vph \ra \right)  - \left( \ey \sum_{i} \frac{m_i |\xs|^2}{|\xb -q_i|^3} + \sum_{i} \frac{m_i \la \xb -q_i, \xs \ra}{|\xb -q_i|^3} \right).
    \end{aligned}
\end{equation}
Notice that by \eqref{eq;xb-Pa}, for any $i$ and $t \ge t_0$, 
$$ |\xb(t) - q_i(t)| \ge \al (t - t_0 +1)^{\se}. $$
Then by Proposition \ref{prop;Hardy-ineq}, 
\begin{equation}
    \label{eq;U-upb-1} \ey \int_{t_0}^{\infty} \sum_{i} \frac{m_i |\vph|^2}{|\xb -q_i|^3} \,dt \le \frac{m}{2 \al^3} \int_{t_0}^{\infty} \frac{|\vph|^2}{(t-t_0 + 1)^2} \,dt \le \frac{4}{9} \int_{t_0}^{\infty} |\dot{\vph}|^2 \,dt;
\end{equation} 
\begin{equation}
    \label{eq;U-upb-2} \int_{t_0}^{\infty}  \sum_{i} \frac{m_i |\la \vph, \xs \ra|}{|\xb -q_i|^3}  \,dt \le \frac{m |\xs|}{\al^3} \int_{t_0}^{\infty} \frac{\|\vph\|_{\D_0}}{(t- t_0 +1)^{\frac{3}{2}}} \,dt = \frac{4 |\xs| }{9}\|\vph\|_{\D_0};
\end{equation}
\begin{equation}
    \label{eq;U-upb-3} \begin{aligned}
    \int_{t_0}^{\infty} \ey \sum_{i} \frac{m_i |\xs|^2}{|\xb -q_i|^3} & + \sum_{i} \frac{m_i \la \xb -q_i, \xs \ra}{|\xb -q_i|^3} \,dt \\
    & \le \int_{t_0}^{\infty} \frac{m |\xs|^2}{2 \al^3 (t-t_0 +1)^2} + \frac{m |\xs|}{\al^2 (t-t_0 +1)^{\frac{4}{3}}} \,dt \le C_1.
    \end{aligned}
\end{equation}
For the remaining terms in \eqref{eq;Ux-Uxb}, notice that for each $i$, 
\begin{equation}
    \label{eq;qt-upb} |\qt_i(t)|\le \frac{2r_0}{\al}(t-t_0 + 1)^{-\frac{2}{3}}, \; \text{ where } \tilde{q}_i(t) = \frac{q_i(t)-r_0 \bs}{\al(t-t_0 +1)^{\se}}.
\end{equation}
Again by Taylor's expansion,  
$$ |\bs-\qt_i|^{-3} = 1 - \frac{3}{2} ( |\qt_i|^2 - 2 \la \qt_i, \bs \ra ) +  R(\qt_i, \bs). $$
where 
\begin{equation}
\label{eq;R-qt} R(\qt_i, \bs) = \int_0^1 \int_0^1   \frac{15 ( |\qt_i|^2 - 2 \la \qt_i, \bs \ra)^2}{ 4 [1 + t_1 t_2 ( |\qt_i|^2 - 2 \la \qt_i, \bs \ra)]^{\frac{7}{2}}} t_2 \,dt_1 dt_2.
\end{equation}
\begin{lem} \label{lem;R}
    There is a $T>t_0$ and a constant $C_2>0$, such that  
    $$R(\qt_i(t), \bs) \le C_2|\qt_i(t)|^2, \; \forall t \ge T.$$
\end{lem}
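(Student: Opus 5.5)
We set $w(t) = |\qt_i(t)|^2 - 2\la \qt_i(t), \bs \ra$, so that
$$ R(\qt_i, \bs) = \int_0^1 \int_0^1 \frac{15\, w^2\, t_2}{4\,[1 + t_1 t_2 w]^{7/2}} \,dt_1\,dt_2 . $$
The plan is to bound the numerator by $C|\qt_i|^2$ and to keep the denominator away from zero. Both follow from the decay estimate \eqref{eq;qt-upb}, which gives $|\qt_i(t)| \le \frac{2r_0}{\al}(t-t_0+1)^{-2/3} \to 0$ as $t \to \infty$.

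\emph{Step 1 (smallness of $w$).} Since $|\bs|=1$, Cauchy--Schwarz gives
$$ |w| \le |\qt_i|^2 + 2|\qt_i| = |\qt_i|\,(|\qt_i| + 2). $$
We choose $T > t_0$ so large that $\frac{2r_0}{\al}(T-t_0+1)^{-2/3} \le \frac{1}{4}$. Then $|\qt_i(t)| \le \frac14$ for every $t \ge T$ and every $i$, and hence
$$ |w(t)| \le \tfrac{9}{4}\,|\qt_i(t)| \le \tfrac{9}{16} < 1 \quad \text{for all } t \ge T. $$

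\emph{Step 2 (denominator).} For $t_1, t_2 \in [0,1]$ we have $1 + t_1 t_2 w \ge 1 - |w| \ge \frac{7}{16}$. Therefore $[1 + t_1 t_2 w]^{-7/2} \le (16/7)^{7/2}$, and this bound is uniform in $t \ge T$ and in $t_1, t_2$.

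\emph{Step 3 (conclusion).} Combining the two steps, for $t \ge T$,
$$ R(\qt_i, \bs) \le \frac{15}{4}\left(\frac{16}{7}\right)^{7/2} \left(\frac{9}{4}\right)^2 |\qt_i|^2 \int_0^1\!\!\int_0^1 t_2 \,dt_1\,dt_2 . $$
This is the desired bound with an explicit constant $C_2$. The constant does not depend on $i$, and we may take the same $T$ for all $i$ since there are only finitely many indices. If $r_0 = 0$, then $\qt_i \equiv 0$ and the bound is trivial.

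The only point needing care is the uniform lower bound $1 + t_1 t_2 w \ge 7/16$ on the denominator. It is secured by taking $T$ large through \eqref{eq;qt-upb}; everything else is a direct estimate.
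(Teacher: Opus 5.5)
Your proof is correct and follows essentially the same route as the paper: you choose $T$ via \eqref{eq;qt-upb} so that $|\qt_i(t)|\le 1/4$ for $t\ge T$, bound the denominator in \eqref{eq;R-qt} uniformly from below, and bound the numerator by a constant times $|\qt_i|^2$. The difference is cosmetic: the paper gets the lower bound $1/2$ by writing $1+t_1t_2 w = 1-t_1t_2+t_1t_2|\bs-\qt_i|^2$, while you use the cruder but equally valid bound $1-|w|\ge 7/16$ (and you correctly keep the square of the factor $|\qt_i|+2$ in the numerator).
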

\begin{proof}
By \eqref{eq;qt-upb}, there is a $T>t_0$, such that for each $i$
$$ |\qt_i(t)| \le 1/4, \; \forall t \ge T. $$
As a result, for any $0 \le t_1, t_2 \le 1$, 
    $$ \begin{aligned}
        1 + t_1t_2(|\qt_i|^2 - 2 \la \qt_i, \bs \ra) & \ge 1 -t_1 t_2 + t_1t_2|\bs -\qt_i|^2 \ge 1 -t_1 t_2 +t_1 t_2 (1 - |\qt_i|)^2 \\
        & \ge 1 - 2t_1 t_2 |\qt_i| \ge 1 - \ey t_1t_2 \ge \ey.
    \end{aligned}
    $$
    Then
    $$   R(\qt_i(t), \bs)  \le \frac{15}{4}|\qt_i(t)|^2 \int_0^1 \int_0^1  \frac{2 +|\qt_i(t)|}{(1 -t_1 t_2 + t_1t_2|\bs -\qt_i(t)|^2)^{\frac{7}{2}}} \,dt_1 dt_2  \le C_2 |\qt_i(t)|^2. 
    $$
\end{proof}
Meanwhile by a direct computation
$$ \sum_i m_i\frac{ \xb(t)- q_i(t)}{|\xb(t)-q_i(t)|^3} = \frac{1}{\al^2 (t-t_0 +1)^{\frac{4}{3}}} \sum_i m_i \frac{\bs -\qt_i(t)}{|\bs-\qt_i(t)|^3}$$
where
\begin{equation} \label{eq;R-exp}
    \begin{aligned}
        \sum_{i } & m_i \frac{\bs -\qt_i}{|\bs -\qt_i|^3}  = m\bs + 3 \sum_{i } m_i \la \qt_i, \bs \ra \bs - \sum_{i} m_i\qt_i \\
        & \quad - \frac{3}{2} \sum_i m_i |\qt_i|^2 \bs + \sum_i m_i \left( \frac{3}{2}|\qt_i|^2 - 3 \la \qt_i, \bs \ra \right) \qt_i + \sum_i m_i R(\qt_i, \bs)(\bs - \qt_i).
    \end{aligned}
\end{equation}
By Lemma \ref{lem;R}, for any $t \ge T$, 
\begin{equation}
    \label{eq;ddot-xb-upb}  \left|\sum_i \frac{m_i(\xb -q_i)}{|\xb-q_i|^3} + \ddot{\xb} \right|   \le C_3(t -t_0 +1)^{-2}.
\end{equation}
Combining this with the Cauchy-Schwarz inequality and Proposition \ref{prop;Hardy-ineq}, 
\begin{equation} \label{eq;U-upb-4}
    \int_{T}^{\infty}   \left|  \la\sum_i \frac{m_i(\xb -q_i)}{|\xb-q_i|^3} + \ddot{\xb}, \vph \ra  \right|  \,dt  \le C_3\int_{t_0}^{\infty} \frac{|\vph|}{|t-t_0 +1|^2} \,dt  \le C_4\|\vph\|_{\D_0}.
\end{equation}
Meanwhile for $t \in [t_0, T]$, 
\begin{equation} \label{eq;U-upb-5}
    \int_{t_0}^{T} \left|  \la\sum_i \frac{m_i(\xb -q_i)}{|\xb-q_i|^3} + \ddot{\xb}, \vph \ra  \right| \,dt  \le C_5 \int_{t_0}^{\infty} \frac{|\vph|}{|t-t_0+1|^{\frac{4}{3}}} \,dt \le C_6 \|\vph\|_{\D_0}.
\end{equation}
\begin{prop}
    \label{prop;MiniPara:label}
    $\Ap$ has a global minimizer $\vph(t)$ in $\D_0$. 
\end{prop}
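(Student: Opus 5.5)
The proof follows the same scheme as Proposition \ref{prop;MiniHyper:label}: we show that $\Ap$ is coercive and weakly lower semi-continuous on $\D_0$ and then apply the direct method. All the needed estimates are already in hand from \eqref{eq;Ux-Uxb}--\eqref{eq;U-upb-5}.

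\emph{Coercivity.} Start from the decomposition \eqref{eq;Ux-Uxb}. The remainder term $\sum_i m_i R(u_i,v_i)$ is nonnegative by \eqref{eq;R-uivi}, so we may drop it to get a lower bound. The quadratic term is bounded by $\frac{4}{9}\|\vph\|^2_{\D_0}$ through \eqref{eq;U-upb-1}; this uses the Hardy inequality of Proposition \ref{prop;Hardy-ineq} and the choice $\al^3 = 9m/2$. The term involving $\la\vph,\xs\ra$ is at most $\frac{4|\xs|}{9}\|\vph\|_{\D_0}$ by \eqref{eq;U-upb-2}. The linear term $\la \sum_i m_i(\xb-q_i)/|\xb-q_i|^3 + \ddot{\xb},\vph\ra$ is handled by \eqref{eq;U-upb-4} on $[T,\infty)$ and by \eqref{eq;U-upb-5} on $[t_0,T]$, giving a bound $C\|\vph\|_{\D_0}$. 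The last group is bounded by the constant $C_1$ from \eqref{eq;U-upb-3}. Putting these together,
$$\Ap(\vph) \ge \Big(\ey - \frac{4}{9}\Big)\|\vph\|^2_{\D_0} - C\|\vph\|_{\D_0} - C_1 = \frac{1}{18}\|\vph\|^2_{\D_0} - C\|\vph\|_{\D_0} - C_1,$$
which tends to $\infty$ as $\|\vph\|_{\D_0}\to\infty$. The sharp constant $4/9 < 1/2$ is exactly what makes this work, so it is the delicate point. I will check that \eqref{eq;U-upb-1} really follows from the Hardy constant $4$ together with $|\xb-q_i|\ge \al(t-t_0+1)^{2/3}$: indeed $\frac{m}{2\al^3}\cdot 4 = \frac{4}{9}$.

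\emph{Weak lower semi-continuity.} Take $\vph_n\rightharpoonup\vph$ in $\D_0$ with $\|\vph_n\|_{\D_0}\le C$ and $\Ap(\vph_n)$ bounded. After passing to a subsequence, $\vph_n\to\vph$ uniformly on compact sets, by Proposition \ref{prop;Hardy-ineq} and Rellich compactness on bounded intervals. We split the time axis at a large $T$.

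On $[T,\infty)$ we use the expansion \eqref{eq;Ux-Uxb} and treat its terms one at a time.
\begin{enumerate}
\item The linear functionals of $\vph$ are continuous with respect to weak convergence. Their kernels decay like $(t-t_0+1)^{-3/2}$ and $(t-t_0+1)^{-2}$, so they define bounded functionals on $\D_0$ via Hardy and Cauchy--Schwarz, and weak convergence therefore passes to the limit.
\item The quadratic term $-\ey\sum_i m_i|\vph|^2/|\xb-q_i|^3$ has the wrong sign for a Fatou argument. Instead we show it converges. Its tail beyond $T'$ is bounded by $C\sup_n\|\vph_n\|^2 \int_{T'}^\infty (t-t_0+1)^{-1-1/3}\,dt$, using $|\vph_n|^2\le C^2(t-t_0+1)$, and this is small uniformly in $n$. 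On $[T,T']$ we have uniform convergence.
\item For the nonnegative remainder $R$ we apply Fatou's lemma.
\end{enumerate}

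On $[t_0,T]$ the argument is the same as in Proposition \ref{prop;MiniHyper:label}. The potential $U(x_n,t)\ge 0$ is integrable with uniformly bounded integral, so Fatou's lemma gives lower semi-continuity. The terms $U(\xb,t)$ and $\la\ddot{\xb},\vph_n\ra$ converge by uniform convergence. Finally, the kinetic term is weakly lower semi-continuous by convexity. Combining coercivity and weak lower semi-continuity yields a global minimizer.

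The main obstacle I expect is making this splitting rigorous: the pointwise Taylor expansion \eqref{eq;Ux-Uxb} is exact, but the term with the bad sign must be controlled uniformly on tails. If that tail control fails, I would fall back on bounding $\Ap(\vph_n)$ from below by its restriction to $[t_0,T']$ plus a uniformly small error.
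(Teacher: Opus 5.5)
Your coercivity argument is the same as the paper's. The weak lower semi-continuity part, however, has a genuine gap at item 2.

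First, the tail bound is miscomputed. From $|\vph_n(t)|^2\le\|\vph_n\|^2_{\D_0}(t-t_0+1)$ and $|\xb-q_i|^3\ge\al^3(t-t_0+1)^2$ you only get decay like $(t-t_0+1)^{-1}$, which is not integrable. So there is no uniform tail control.

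Second, and more seriously, the claim itself is false. The term $\vph\mapsto-\ey\int_{t_0}^\infty\sum_i m_i|\vph|^2|\xb-q_i|^{-3}\,dt$ is not weakly continuous on $\D_0$, because Hardy's inequality is dilation invariant. Take $\psi$ smooth with support in $[1,2]$ and set $\vph_n(t)=\sqrt{n}\,\psi\bigl((t-t_0+1)/n\bigr)$.
\begin{itemize}
\item $\|\vph_n\|_{\D_0}=\|\dot\psi\|_{L^2}$ and $\vph_n\rightharpoonup 0$.
\item $\int_{t_0}^\infty|\vph_n|^2(t-t_0+1)^{-2}\,dt=\int|\psi(s)|^2s^{-2}\,ds$ for every $n\ge 2$.
\item On the support, $|\xb-q_i|^3=\al^3(t-t_0+1)^2(1+o(1))$, so your term tends to $-\frac{m}{2\al^3}\int|\psi|^2s^{-2}\,ds<0$, not to its value $0$ at the weak limit.
\end{itemize}
This concave term is only weakly upper semi-continuous, which is the wrong direction. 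Your fallback runs into the same obstruction: the tail of $\Ap(\vph_n)$ is not uniformly small from below unless this term is controlled by the kinetic energy on that tail.

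The missing idea is to keep the negative quadratic term together with the kinetic energy. The sharp inequality $\frac49<\ey$ is needed for lower semi-continuity, not only for coercivity. Define $Q(\vph)=\int_{t_0}^\infty\ey|\dot\vph|^2-\ey\sum_i m_i|\vph|^2|\xb-q_i|^{-3}\,dt$. By \eqref{eq;U-upb-1}, $\frac{1}{18}\|\vph\|^2_{\D_0}\le Q(\vph)\le\ey\|\vph\|^2_{\D_0}$. So $Q$ is a bounded nonnegative quadratic form, hence convex and continuous, hence weakly lower semi-continuous.

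This is exactly the role of $B_3$ in the paper. On $[T,\infty)$ the paper Taylor-expands $U$ to third order around $\xb$. It passes to the limit in the linear part $B_1$ and the cubic remainder $B_2$ by dominated convergence. It treats $\ey|\dot\vph|^2+\ey\langle\nabla^2U(\xb,t)\vph,\vph\rangle$ as a single quadratic piece $B_3$, and uses Fatou's lemma on $[t_0,T]$.

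With $Q$ replacing your item 2, your own decomposition works on all of $[t_0,\infty)$, with no splitting at $T$. Since \eqref{eq;Ux-Uxb} is exact for every $t\ge t_0$, you get $\Ap=Q+\ell+c+\int_{t_0}^\infty\sum_i m_iR(u_i,v_i)\,dt$, where:
\begin{itemize}
\item $\ell$ is a bounded linear functional by \eqref{eq;U-upb-2}, \eqref{eq;U-upb-4} and \eqref{eq;U-upb-5}, hence weakly continuous;
\item $c$ is a finite constant by \eqref{eq;U-upb-3};
\item the nonnegative remainder is handled by Fatou's lemma.
\end{itemize}
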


\begin{proof}
The coercivity of $\Ap$ in $\D_0$ follows from  \eqref{eq;U-upb-1}, \eqref{eq;U-upb-2}, \eqref{eq;U-upb-3}, \eqref{eq;U-upb-4} and \eqref{eq;U-upb-5}, as they imply
$$ \Ap(\vph) \ge \frac{1}{18} \|\vph\|^2_{\D_0}- C_1\|\vph\|_{\D_0}-C_2 \to \infty, \text{ if  } \|\vph\|_{\D_0} \to \infty. $$

To prove the weakly lower semi-continuity. Let's choose a bounded sequence $\{\vph_n \}$ in $\D_0$ with the same properties as in the proof of Proposition \ref{prop;MiniHyper:label}.  

First we will show the following integral is weakly lower semi-continuous for $T$ large enough
\begin{equation} \label{eq;int-T}
\int_T^{\infty} \ey |\dot{\vph}_n|^2 + U(x_n, t)- U(\xb, t)- \la \ddot{\xb}, \vph_n \ra \,dt.
\end{equation}
By Taylor expansion, for any $t \ge T$,
$$ U(x_n, t) = U(\xb, t) + \la \nabla U(\xb,t), \vph_n + \xs \ra + \ey \la \nabla^2 U(\xb, t)(\vph_n + \xs), \vph_n + \xs \ra + R(\xb, \vph_n+\xs, t)$$
where $R(\xb, \vph_n +\xs, t)$ is the remainder given by the following triple integral
\begin{equation*}
    \label{eq;R}  \int_0^1 \int_0^1 \int_0^1 \la \nabla^3 U(\xb(t) + t_1 t_2 t_3 (\vph_n(t)+ \xs), t)(\vph_n(t)+\xs), \vph_n(t)+\xs, \vph_n(t)+\xs \ra t_1 t_2^2 \,dt_1 dt_2 dt_3.
\end{equation*}
Then we can rewrite \eqref{eq;int-T} as
$$ \int_T^{\infty}  \ey |\dot{\vph}_n|^2 + U(x_n, t)- U(\xb, t)- \la \ddot{\xb}, \vph_n \ra \,dt =  B_1(\vph_n) + B_2(\vph_n) +B_3(\vph_n)$$
where 
$$ B_1(\vph_n)= \int_{T}^{\infty} \la \nabla U(\xb, t), \vph_n + \xs \ra - \la \ddot{\xb}, \vph_n \ra + \la \nabla^2 U(\xb, t)\xs, \vph_n \ra + \ey \la \nabla^2 U(\xb, t) \xs, \xs \ra \,dt; $$
$$ B_2(\vph_n) =  \int_{T}^{\infty} R(\xb, \vph_n + \xs, t) \,dt;$$
$$ B_3(\vph_n) =  \int_{T}^{\infty} \ey |\dot{\vph}_n|^2 + \ey \la \nabla^2 U(\xb, t)\vph_n, \vph_n\ra \,dt. $$
By a direct computation
\begin{equation}
    \label{eq;nabla2-U} \nabla^2 U(\xb, t) = 3 \sum_{i} m_i \frac{(\xb -q_i)(\xb -q_i)^T}{|\xb -q_i|^5}- \sum_i \frac{m_i I}{|\xb -q_i|^3}
\end{equation}
where $I$ is the identity matrix and $(\xb-q_i)^T$ is the transpose of $(\xb-q_i)$. 

By \eqref{eq;ddot-xb-upb} and \eqref{eq;nabla2-U}, for any $t \ge T$
$$ \left| \la \nabla^2 U(\xb, t) \xs, \xs \ra  \right| \le \frac{C_1}{|t -t_0 +1|^2},
$$  
$$ |\la \nabla U(\xb, t) - \ddot{\xb} + \nabla^2 U(\xb, t)\xs, \vph_n \ra| \le \frac{C_1|\vph_n(t)|}{|t-t_0 +1|^{2}} \le \frac{C_2}{|t-t_0 +1|^{\frac{3}{2}}}. 
$$
Hence both are integrable from $T$ to $\infty$ and by the dominated convergence theorem 
$$  \lim_{n \to \infty} B_1(\vph_n) = B_1(\vph). $$

For $B_2(\vn)$, notice that for any $t \ge T$,
$$ \begin{aligned}
    |\la \nabla^3 U(\xb(t) & + t_1 t_2 t_3 (\vph_n(t)+ \xs), t)(\vph_n(t)+\xs), \vph_n(t)+\xs, \vph_n(t)+\xs \ra | \\
    & \le \frac{C_1|\vph_n(t)+ \xs|^3}{|t-t_0 +1|^{\frac{8}{3}}} \le \frac{C_2|t-t_0 +1|^{\frac{3}{2}} + C_3}{|t-t_0+1|^\frac{8}{3}},
\end{aligned}
$$ 
which is also integrable from $T$ to $\infty$ and for the same reason as above 
$$ \lim_{n \to \infty} B_2(\vph_n) = B_2(\vph). $$

Meanwhile by \eqref{eq;nabla2-U}, for any $\psi \in \D_0$ and $t \ge T$, 
$$  -\sum_{i} \frac{m_i|\psi|^2}{|\xb -q_i|^3} \le    \la \nabla^2 U(\xb, t) \psi, \psi \ra  \le 4\sum_i \frac{m_i|\psi|^2}{|\xb -q_i|^3}. $$
As a result, 
$$ B_3(\psi) \le  \ey \|\psi\|_{\D_0}^2 + \frac{4}{9} \int_{t_0}^{\infty} \frac{|\psi(t)|^2}{|t-t_0 +1|^2} \,dt  \le C_1 \|\psi\|_{\D_0}^2; 
$$
$$ B_3(\psi) \ge \ey \|\psi\|_{\D_0}^2 -  \frac{1}{9} \int_{t_0}^{\infty} \frac{|\psi(t)|^2}{|t-t_0 +1|^2} \,dt \ge \frac{1}{18}\|\psi\|_{\D_0}^2.
$$
Notice that $\sqrt{B_3(\psi)}$ defines a norm on $\D_0$ and the above estimates show it is equivalent to $\| \cdot\|_{\D_0}$. Hence $B_3$ must be weakly lower semi-continuous with respect to $\|\cdot \|_{\D_0}$. 

Finally by a similar argument as in the proof for the hyperbolic case, 
$$ \begin{aligned}
    \int_{t_0}^T U(x_n, t)\,dt & \le \Ap(\vph_n) + \int_{t_0}^T U(\xb, t) \,dt + \int_{t_0}^T |\la \ddot{\xb}, \vph_n\ra| \,dt \\
    & \le C_1 +  \frac{m}{\al}  \int_{t_0}^T|t-t_0+1|^{-\se} \,dt + \frac{2\al}{9} \|\vph_n\|_{\D_0}\int_{t_0}^{T} |t-t_0 +1|^{-\frac{5}{6}} \,dt \le C_2. 
\end{aligned}
$$
Then by Fatou's lemma 
\begin{equation}
    \label{eq;U-upb-para-1} \int_{t_0}^T U(x, t) \,dt \le \liminf_{n \to \infty} \int_{t_0}^T U(x_n, t) \,dt. 
\end{equation}
As a result,
\begin{equation*}
    \int_{t_0}^T \ey |\dot{\vph}|^2 + U(x, t)- U(\xb, t)- \la \ddot{\xb}, \vph\ra \,dt \le \liminf_{n \to \infty} \int_{t_0}^T \ey |\dot{\vph}_n|^2 + U(x_n, t)- U(\xb, t)- \la \ddot{\xb}, \vph_n \ra \,dt
\end{equation*}

This finishes our proof of the weakly lower semi-continuity of $\Ap$. 
\end{proof}

By Proposition \ref{prop;Hardy-ineq} and the same argument as in the proof of Proposition \ref{prop;NormalMinCollFree-hyper}
\begin{prop}
\label{prop;NormalMinCollFree-para} If $\vph \in \D_0$ is a minimizer of $\Ap$, then $x =\xb +\vph +\xs$ is a fixed-end minimizer of $\A$ for any $t_1 < t_2$, and a collision-free solution of \eqref{eq;RNBP1}. Moreover $|\vph(t)| = O(\sqrt{t})$, when $t \to \infty$. 
\end{prop}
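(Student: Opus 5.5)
The proof has three parts: a gluing argument for the minimizing property, Theorem \ref{thm;Marchal-restricted} for the absence of collisions, and an integral estimate on the equation \eqref{eq;RNBP-3} for the growth bound.

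\emph{Fixed-end minimality.} Suppose some $y \in H^1([t_1,t_2],\rr^d)$ with $y(t_i)=x(t_i)$ satisfies $\A(y;t_1,t_2)<\A(x;t_1,t_2)$. Define $\psi=\vph$ outside $[t_1,t_2]$ and $\psi=y-\xb-\xs$ on $[t_1,t_2]$. This $\psi$ lies in $\D_0$, since it differs from $\vph$ only on a compact interval and matches it at the endpoints. The renormalizing terms $-U(\xb,t)$ and $-\la \ddot{\xb},\psi\ra$ need separate care. The first is independent of the path and cancels. The second is a linear term integrated over a compact interval. Integrating by parts, and using that $\psi-\vph$ vanishes at $t_1,t_2$, it is absorbed into the kinetic term: $\tfrac12|\dot\psi|^2-\la\ddot{\xb},\psi\ra$ differs from $\tfrac12|\dot{\psi}+\dot{\xb}|^2$ by a boundary term plus $\tfrac12|\dot{\xb}|^2$, neither of which depends on the path. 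Hence
$$\Ap(\vph)-\Ap(\psi)=\A(x;t_1,t_2)-\A(y;t_1,t_2)>0,$$
contradicting minimality of $\vph$. Theorem \ref{thm;Marchal-restricted} then shows $x$ is collision-free on every $(t_1,t_2)$. Taking $t_1<t_0$ is not allowed, so we apply the theorem on intervals $(t_0,t_2)$ and obtain freedom from collisions for $t>t_0$; $x$ is therefore a classical solution, and $\vph$ solves \eqref{eq;RNBP-3}.

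\emph{Growth of $\vph$.} The bound $|\vph(t)|\le \|\vph\|_{\D_0}\sqrt{t-t_0+1}$ in Proposition \ref{prop;Hardy-ineq} gives $|\vph(t)|=O(\sqrt t)$ directly, which is what is claimed. To mirror the hyperbolic case we can say more from the equation. Writing $\ddot\vph=\nabla U(x,t)-\nabla U(\xb,t)+(\nabla U(\xb,t)-\ddot{\xb})$:
\begin{enumerate}
\item By \eqref{eq;ddot-xb-upb}, the last bracket is $O(t^{-2})$.
\item Since $|\vph|+|\xs|=O(t^{1/2})$ and $|\xb-q_i|\sim \al t^{2/3}$, the mean value theorem with $\nabla^2U=O(t^{-2})$ bounds the difference $\nabla U(x,t)-\nabla U(\xb,t)$ by $O(t^{-2}\cdot t^{1/2})=O(t^{-3/2})$.
\end{enumerate}
Thus $\ddot\vph$ is integrable at infinity, so $\dot\vph$ has a limit, which must be $0$ because $\dot\vph\in L^2$. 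It follows that $\dot\vph(t)=O(t^{-1/2})$, and integrating gives $\vph=O(\sqrt t)$ again. This confirms the estimate and shows that $x(t)=\al t^{2/3}\bs+O(t^{1/2})$, i.e.\ $x$ is parabolic.

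\emph{Main obstacle.} The delicate point is making sure the linear term $\la\ddot{\xb},\psi\ra$ does not spoil the comparison in the gluing step; the integration by parts above handles it, and the rest is routine.
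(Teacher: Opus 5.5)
Your proposal is correct and follows the paper's route. The paper also reuses the gluing argument from Proposition \ref{prop;NormalMinCollFree-hyper}, and your integration by parts correctly disposes of the $\la \ddot{\xb},\psi\ra$ term, which the paper leaves implicit. The rest matches as well: Theorem \ref{thm;Marchal-restricted} gives collision-freeness, and the growth bound $|\vph(t)|=O(\sqrt{t})$ comes directly from Proposition \ref{prop;Hardy-ineq}. Your extra estimate $\dot\vph(t)=O(t^{-1/2})$ from the equation is also correct and shows $|\dot x(t)|\to 0$, a point the paper does not spell out.
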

Theorem \ref{thm;parabolic} then follows directly from Propositions \ref{prop;MiniPara:label} and \ref{prop;NormalMinCollFree-para}.

\subsection{Bi-hyperbolic and bi-parabolic solutions}
To show the solutions are regularizable when there are collisions, we recall the following well-known result, for a proof see \cite{BOZ19} and \cite{Zhao23}.
\begin{prop}
    \label{prop;regularizble} Let $x|_{[t_0 -\dl, t_0+\dl]}$ be a collision-ejection solution of \eqref{eq;RNBP1} with an isolated binary collision at time $t_0$ with $x(t_0) = q_{i_0}(t_0)$. It is a regularizable solution when the following one-sided limits exist and satisfy
    $$ \lim_{t \to t_0^-} \frac{x(t)-q_{i_0}(t)}{|x(t)-q_{i_0}(t)|} =  \lim_{t \to t_0^+} \frac{x(t)-q_{i_0}(t)}{|x(t)-q_{i_0}(t)|};
    $$
    $$ \lim_{t \to t_0^-} \ey |\dot{x}(t) - \dot{q}_{i_0}(t)|^2 - U(x(t), t) = \lim_{t \to t_0^+} \ey |\dot{x}(t) - \dot{q}_{i_0}(t)|^2 - U(x(t), t). $$
\end{prop}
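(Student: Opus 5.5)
The plan is to reduce the restricted problem near the collision to a perturbed Kepler problem and then invoke Levi-Civita (planar) or Kustaanheimo--Stiefel (spatial) regularization, as in \cite{BOZ19}, \cite{Zhao23}. Set $\bfr = x - q_{i_0}$ as in the proof of Proposition \ref{prop;Sundman}. Then \eqref{eq;r} reads $\ddot{\bfr} = -\mu \bfr/|\bfr|^3 + F(\bfr,t)$, where $\mu=m_{i_0}$ and $F(\bfr,t) = -\ddot q_{i_0} - \sum_{i\ne i_0} m_i(\bfr+q_{i_0}-q_i)/|\bfr+q_{i_0}-q_i|^3$. Since $q$ is collision-free near $t_0$, $F$ is smooth (real-analytic if $q$ is) and bounded in a neighborhood of $(\bfr,t)=(0,t_0)$. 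Next we make the system autonomous by adjoining $t$ as a variable with conjugate momentum $-E$, where $E$ is the Kepler energy $\frac12|\dot{\bfr}|^2 - \mu/|\bfr|$. Up to terms that are bounded near collision, $E$ agrees with the energy appearing in the statement, because $\frac12|\dot x-\dot q_{i_0}|^2 - U = \frac12|\dot{\bfr}|^2 - \mu/|\bfr| - \sum_{i\ne i_0} m_i/|x-q_i|$ and the last sum is continuous at $t_0$.

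The key steps, in order, are as follows. First, show that $E(t)$ has finite one-sided limits as $t\to t_0^\pm$. Its derivative is $\dot E = \langle \dot{\bfr}, F\rangle$, which by Sundman's estimates (Proposition \ref{prop;Sundman}) is $O(|t-t_0|^{-1/3})$ and hence integrable. This is exactly where the restricted setting is better than the general observation (2) in Section \ref{sec:local_minimizers_are_collision_free}: there $\dot h$ involved $\partial_t U$ including the singular term, whereas here the Kepler energy only sees the bounded perturbation $F$.

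Second, pass to the fictitious time $ds = dt/|\bfr|$ and the Levi-Civita/KS coordinate $z$ with $\bfr = z^2$ (or $\bfr = \mathcal{K}(z)$). In these variables the equation becomes $z'' = \frac{E}{2} z + O(|z|^3)$, with $t' = |z|^2$ and $E' = |z|^2\langle \dot{\bfr},F\rangle$. This is a system with smooth (regular) right-hand side at $z=0$. Consequently the collision orbit corresponds to a regular solution passing through $z=0$ at some $s_0$, and it can be continued through $s_0$.

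Third, match the two branches. The incoming branch arrives at $z=0$ with $(t,E)\to(t_0,E^-)$ and $z'(s_0^-)$ determined, up to sign, by $\bs^-$ and $\mu$: by Sundman's estimates $|z'|^2 \to \mu/2$ and $z'/|z'|$ is a square root of $\bs^-$. The outgoing branch is characterized in the same way by $\bs^+$ and $E^+$. If $\bs^-=\bs^+$ and $E^-=E^+$, then after choosing the sign of the square root appropriately the two branches share the same Cauchy data $(z,z',t,E)$ at $s_0$. By uniqueness for the regularized ODE they form a single smooth regularized solution, and this is what regularizable means.

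The main obstacle I expect is the third step, namely justifying that the limits of $z'$ exist and are determined by $\bs^\pm$. Direction convergence of $\bfr$ gives direction convergence of $z$, but this must be upgraded to convergence of $z'$. I would first try to use the second estimate in Proposition \ref{prop;Sundman}: $\dot{\bfr} \sim \frac23(\frac92\mu)^{1/3}|t-t_0|^{-1/3}\bs^\pm$, combined with $z' = \frac12 |z|^2\bar z\,\dot{\bfr}/\dots$. Alternatively, since the regularized vector field is smooth, $(z,z',t,E)$ is Lipschitz in $s$ near $s_0$ and therefore converges automatically, and the limit can then be identified by Sundman. A further point to check is the sign ambiguity of the square root: this is harmless, because $z\mapsto -z$ is a symmetry of the regularized system.
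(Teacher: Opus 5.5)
Your proposal is correct and follows the standard route that the paper invokes without proving: for this proposition the paper gives no argument of its own, only the pointer to \cite{BOZ19} and \cite{Zhao23}. That route regularizes the perturbed Kepler problem for $\bfr = x-q_{i_0}$ by Levi-Civita/Kustaanheimo--Stiefel in the extended variables $(z,z',t,E)$, obtains the one-sided energy limits from the integrability of $\langle \dot{\bfr},F\rangle$, and glues the two branches at $z=0$ by uniqueness once $\bs^-=\bs^+$ and the energies agree. There are two small caveats: in the KS case the lift is fixed only up to the $S^1$-fiber action rather than a sign (this is still a symmetry of the regularized system, so the matching goes through), and LC/KS exist only for $d=2,3$, while the paper is posed in $\rr^d$ for arbitrary $d\ge 2$, so for $d\ge 4$ your argument would need a different regularization device; whether the cited sources cover $d\ge 4$ I cannot confirm.
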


The idea of proving Theorem \ref{thm;bi-hyp-para} is again to look for minimizers of suitable renormalized action functionals, but with the initial configuration varying on some straight line segment. Then we can extend the minimizers to entire solutions through symmetry. Because of this, the admissible space of paths will be slightly different from $\D_0$. 

Given any subset $\Om \subset \rr^d$, we define
$$ \D(\Om): = \{ \vph \in H^1_{loc}([t_0, \infty), \rr^d): \; \vph(t_0) \in \Om \text{ and } \int_{t_0}^{\infty} |\dot{\vph}|^2 \,dt < \infty \}. 
$$
Introduce the following inner product in $\D(\Om)$

$$ \langle \vph, \psi \rangle_{\D} = \langle \vph(t_0), \psi(t_0) \rangle + \int_{t_0}^{\infty} \langle \vph(t), \psi(t) \rangle \,dt. 
$$
and let $\| \cdot \|_{\D}$ be the corresponding norm.

\begin{prop} \label{prop;D-Om}
When $\Om$ is bounded and closed in $\rr^d$, $\D(\Om)$ is a Hilbert space, and there is a finite constant $C>0$ only depending on $\Om$, such that for any $\vph \in \D(\Om)$,
    $$ |\vph(t)| \le C + \|\vph\|_{\D_0} \sqrt{t -t_0 +1}, \; \forall t \ge t_0;$$
    $$ \int_{t_0}^{\infty} \frac{|\vph(t) - \vph(t_0)|^2}{|t-t_0 +1|^2} \,dt \le 4 \int_{t_0}^{\infty} |\dot{\vph}|^2 \,dt;$$
\end{prop}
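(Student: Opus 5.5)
The plan is to reduce everything to the case already treated in Proposition \ref{prop;Hardy-ineq} by subtracting off the initial value. Given $\vph \in \D(\Om)$, set $\psi = \vph - \vph(t_0)$. Then $\psi \in H^1_{loc}$, $\psi(t_0)=0$ and $\dot\psi = \dot\vph$, so $\psi \in \D_0$ with $\|\psi\|_{\D_0} = \|\vph\|_{\D_0}$. Here $\|\vph\|_{\D_0}$ means the seminorm $(\int_{t_0}^\infty |\dot\vph|^2\,dt)^{1/2}$.

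\textbf{The two inequalities.} Applying the first inequality of Proposition \ref{prop;Hardy-ineq} to $\psi$ gives
$$|\vph(t)| \le |\vph(t_0)| + |\psi(t)| \le C + \|\vph\|_{\D_0}\sqrt{t-t_0+1},$$
where $C = \max\{|p|: p \in \Om\}$. This maximum is finite because $\Om$ is bounded and closed. Applying the Hardy-type inequality of the same proposition to $\psi$ gives the second estimate verbatim.

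\textbf{The Hilbert space property.} The inner product should be read as $\la \vph(t_0),\psi(t_0)\ra + \int_{t_0}^\infty \la \dot\vph,\dot\psi\ra\,dt$; the undotted version printed in the statement is presumably a typo, since $\int |\vph|^2$ need not be finite. The ambient space $\{\vph \in H^1_{loc}: \dot\vph \in L^2\}$ with this inner product is isometric to $\rr^d \times \D_0$ via $\vph \mapsto (\vph(t_0), \vph - \vph(t_0))$. It is therefore complete, since $\D_0$ is Hilbert by Proposition \ref{prop;Hardy-ineq}.

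$\D(\Om)$ is the preimage of $\Om \times \D_0$ under this isometry. Since $\Om$ is closed, $\D(\Om)$ is a closed subset of that Hilbert space. Convergence in $\|\cdot\|_{\D}$ controls $\vph(t_0)$, and by the pointwise bound it also gives locally uniform convergence.

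\textbf{The expected obstacle.} When $\Om$ is not a linear subspace (for instance a segment $\Om_i(V)$), $\D(\Om)$ is not a vector space. The phrase ``Hilbert space'' then has to be interpreted as ``a closed (and, for convex $\Om$ such as a segment, convex) subset of a Hilbert space, hence complete.'' That weaker statement is all the later direct-method argument needs. Beyond this interpretive point, the proof is routine.
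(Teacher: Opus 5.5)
Your proof is correct and is essentially what the paper intends. The paper omits the argument, saying only that it is similar to Proposition \ref{prop;Hardy-ineq} (details in \cite{PT22}); your subtraction $\psi=\vph-\vph(t_0)\in\D_0$ reduces both estimates directly to that proposition.

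Your interpretive corrections are also right. The inner product must be built from $\dot\vph,\dot\psi$. For a non-linear $\Om$ such as a segment, $\D(\Om)$ is only a closed subset of the Hilbert space $\rr^d\times\D_0$. It is in fact weakly closed for any closed $\Om$, convex or not, because $\vph\mapsto\vph(t_0)$ is a continuous linear map into the finite-dimensional space $\rr^d$.
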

We omit the proof, as it is similarly to Proposition \ref{prop;Hardy-ineq} and the details can be found in \cite{PT22}. 

\begin{proof}[Proof of Theorem \ref{thm;bi-hyp-para}]
    
First let's consider bi-hyperbolic solutions. Fix an arbitrary $1 \le i \le N-1$, let's assume the end points of $\Om_i(V)$ are $q_{i_1}(t_0)$ and $q_{i_2}(t_0)$. Then set 
$$ \xb(t) = \sqrt{2h}(t -t_0 +1) \bs + r_0 \bs, \; \forall t \ge t_0. $$
and 
$$ \bar{\Om}_i := \{ y \in \rr^d: \; y + \xb(t_0) \in \Om_i(V) \}. $$

Define the admissible space of paths as 
$$ \D_i := \D(\bar{\Om}_i(V)) = \{ \vph \in H^1_{loc}([t_0, \infty), \rr^d): \;  \vph(t_0) \in \bar{\Om}_i, \int_{t_0}^{\infty} |\dot{\vph}|^2 \,dt < \infty \}. 
$$
With Proposition \ref{prop;D-Om}, by a similar argument as in the proof of Proposition \ref{prop;MiniHyper:label} we can show the following action functional $\Ah$ has a global minimizer in $\D_i$
    $$ \Ah(\vph) = \int_{t_0}^{\infty} \ey |\dot{\vph}|^2 + U(\xb + \vph, t) - U(\xb, t) \,dt. $$

Let $\vph_i \in \D_i$ be such a global minimizer of $\Ah$ in $\D_i$ and $x_i = \xb + \vph_i$, then by the exact same argument as in the proof of Proposition \ref{prop;NormalMinCollFree-hyper}, we can show that $x_i|_{[t_1, t_2]}$ is a fixed-end minimizer of $\A$, for any $t_2 >t_1 >t_0$. Meanwhile by the definition of $\D_i$, for any $t_1>t_0$, $x_i|_{[t_0, t_1]}$ must be a minimizer of $\A$ in $B_{t_0, t_1}(\Om_i(V), x_i(t_1))$. Then by Theorem \ref{thm;Marchal-rest-2}, $x_i(t_0)$ is either collision-free with $\dot{x}_i(t_0^+)$ perpendicular to $\Om_i(V)$, or $x_i(t_0) = q_{i_j}(t_0)$, for $j =1, 2$ and  
\begin{equation}
    \label{eq;lim-xi}  \lim_{t \to t_0^+} \frac{ x_i(t)- q_{i_j}(t) }{|x_i(t)- q_{i_j}(t)|} = - \frac{q_{i_{3-j}}(t_0)- q_{i_j}(t_0)}{|q_{i_{3-j}}(t_0) -q_{i_j}(t_0)|}.
\end{equation}

Now extend the domain of $x_i$ to $(-\infty, t_0]$ by setting
$$ x_i(t) = R_V x_i(2t_0 -t), \;\; \forall t \le t_0. $$
By Proposition \ref{prop;regularizble}, $x_i|_{\rr}$ is either a collision-free or a regularizable solution of \eqref{eq;RNBP1}.

Let $x_j$ be a bi-hyperbolic solution obtained by a similar argument as above but with $x_j(t_0) \in \Om_j(V)$, for any $1 \le j \ne i \le N-1$. When $\Om_i(V) \cap \Om_j(V) = \emptyset$, it is obvious $x_i$ and $x_j$ are different. When the intersection is non-empty, we must have $\Om_i(V) \cap \Om_j(V) = q_{i_1}(t_0)$ or $q_{i_2}(t_0)$. In this case the only way that $x_i$ and $x_j$ are the same is that they both collide with the same primary at $t_0$, but then \eqref{eq;lim-xi} implies they must approach the primary from the opposite directions, so can not be the same solution.

The proof for bi-parabolic solutions is almost the same as above after replacing $\Ah$ by 
$$ \Ap(\vph) = \int_{t_0}^{\infty} \ey |\dot{\vph}|^2 + U(\xb + \vph, t) - U(\xb, t) - \langle \ddot{\xb}, \vph \rangle \,dt$$
with
$$ \xb(t) = \al (t - t_0 + 1)^{\se} \bs + r_0 \bs, \;\; \forall t \ge t_0.$$

\end{proof}
\section{Applications: bounded solutions } \label{sec;bounded}

In this section we will first prove the theorems about bounded solutions given in the introduction. Then we will give all kinds of examples that satisfy the conditions required by these theorems. 

\begin{proof}[Proof of Theorem \ref{thm;periodic-1}]

Since $\A$ is weakly lower-semi continuous, to show $\A$ has a minimizer in $B_{t_1, t_2}(V_1, V_2)$, by the standard argument in the direct methods of calculus of variations, it is enough to prove the coercivity of $\A$.   

Given any $x \in B_{t_1, t_2}(V_1, V_2)$, as $V_1 \cap V_2 = \{0\}$, by trigonometry, there is a constant $C_1$ independent of $x$, such that 
\begin{equation}
\label{eq;trig} |x(t_2) -x(t_1)| \ge \frac{1}{C_1}|x(t_1)|.
\end{equation}
Then for any $t \in [t_1, t_2]$, by the Cauchy-Schwarz inequality,  
$$ \begin{aligned}
|x(t)| & \le |x(t_1)| + |x(t)-x(t_1)| \\
& \le C_1|x(t_2)-x(t_1)| + |x(t)-x(t_1)| \\
       & \le (1 + C_1) \int_{t_1}^{t_2} |\dot{x}| \,dt \\
       & \le (1 +C_1) \sqrt{t_2 -t_1} \left( \int_{t_1}^{t_2} |\dot{x}|^2 \,dt \right)^{\ey} 
\end{aligned}
$$
As a result, 
\begin{equation}
\label{eq;coer} \begin{aligned}
\|x\|^2_{H^1} & = \int_{t_1}^{t_2} |x|^2 \,dt + \int_{t_1}^{t_2} |\dot{x}|^2 \,dt \\
& \le \left( 1 + (t_2 -t_1)^2 (1 +C_1)^2  \right) \int_{t_1}^{t_2} |\dot{x}|^2 \,dt \\
& \le 2\left( 1 + (t_2 -t_1)^2 (1 +C_1)^2  \right) \A(x; t_1, t_2).
\end{aligned}
\end{equation}
This proves the coercivity of $\A$ in $B_{t_1, t_2}(V_1, V_2)$. 

Now let $x|_{[t_1, t_2]}$ be a minimizer of $\A$. By Theorem \ref{thm;Marchal-rest-2},  for all $t \in [t_1, t_2]$, $x(t)$ is a collision-free path hitting $V_1$ and $V_2$ perpendicularly at the end points. As a result, it can be extended to an entire solution $x|_{\rr}$ of \eqref{eq;RNBP1} through the reflections with respect to $V_1$ and $V_2$.
\end{proof}

\begin{proof}[Proof of Theorem \ref{thm;perodic-2}]

Again for the existence of a minimizer, it is enough to prove the coercivity of $\A$ in $B_{t_1, t_2}(\Om_i(V_1), V_2)$.

If $\Om_i(V_1)$ is unbounded, to satisfy the assumption, $\Om_i(V_1)$ can not entirely be contained in $V_2$. Then \eqref{eq;trig} still holds, for all $x \in B_{t_1, t_2}(\Om_i(V_1), V_2)$ and some constant $C_1>0$ independent of $x$, and the proof of coercivity is exactly the same as above. 

If $\Om_i(V_1)$ is bounded, then for any $x \in B_{t_1, t_2}(\Om_i(V_1), V_2)$ there is a constant $C_1>0$ independent of $x$, such that for any $t \in [t_1, t_2]$,
$$ \begin{aligned}
|x(t)| & \le |x(t_1)| + |x(t) -x(t_1)| \\
       & \le C_1 + \int_{t_1}^{t_2} |\dot{x}| \,dt \\
       & \le C_1 + \sqrt{t_2 -t_1} \left( \int_{t_1}^{t_2} |\dot{x}|^2 \,dt \right)^{\ey}  
\end{aligned} 
$$
The coercivity then follows from a similar estimate as given in \eqref{eq;coer}.

Let $x|_{[t_1, t_2]}$ be a global minimizer of $\A$ in $B_{t_1, t_2}(\Om_i(V_1), V_2)$. First by Theorem \ref{thm;Marchal-restricted}, $x(t)$ is collision-free, for all $t \in (t_1, t_2)$. Meanwhile as $V_2$ is a proper linear subspace of $\rr^d$, by property (a) in Theorem \ref{thm;Marchal-rest-2}, $x(t_2)$ is also collision-free with $\dot{x}(t_2^-)$ perpendicular to $V_2$. 

On the other hand, as $\Om_i(V_1)$ is either a closed straight line segment or a closed ray, by property (b) and (d) in Theorem \ref{thm;Marchal-rest-2}, either $x(t_1)$ is collision-free with $\dot{x}(t_1^+)$ perpendicular to $\Om_i(V_1)$, or it collides with the one of the endpoints of $\Om_i(V_2)$ (there is only one endpoint if it is a ray). 

Moreover when $x(t_1)$ is a collision, the one sided limit given in properties (b) and (d) in Theorem \ref{thm;Marchal-rest-2} implies that after extending $x|_{[t_1, t_2]}$ to $x|_{\rr}$ through symmetries all the collisions must be regularizable.     
\end{proof}

\begin{proof}[Proof of Theorem \ref{thm;periodic-3}]
The proof of coercivity of $\A$ in $B_{t_1, t_2}(\Om_i(V_1), \Om_j(V_2))$ is almost the same as the proof in Theorem \ref{thm;perodic-2}. 
We will not repeat the details here. 

Let $x|_{[t_1, t_2]}$ be a global minimizer of $\A$ in $B_{t_1, t_2}(\Om_i(V_1), \Om_j(V_2))$. Again by Theorem \ref{thm;Marchal-restricted}, $x(t)$ is collision-free, for all $t \in (t_1, t_2)$. For $x(t_1)$, the proof is the same as in Theorem \ref{thm;perodic-2}. 

For $x(t_2)$, as $\Om_j(V_2)$ is either a closed straight line segment or a closed ray, by properties (c) and (e) in Theorem \ref{thm;Marchal-rest-2}, either $x(t_2)$ is collision-free with $\dot{x}(t_2^-)$ perpendicular to $\Om_j(V_2)$, or it collides with one of the endpoints of $\Om_j(V_2)$ (there is only one endpoint if it is a ray). 

Again, despite the possible collisions at $x(t_1)$ and $x(t_2)$, the results of the one-sided limits in Theorem \ref{thm;Marchal-rest-2} imply that after extending $x|_{[t_1, t_2]}$ to $x|_{\rr}$ through symmetries all the collisions must be regularizable.  
\end{proof}

Below we demonstrate some numerical examples with $V_1$ being the $x$-axis and $V_2$ a line through 
the origin. 
Retrograde orbits in figure~\ref{fig:retrograde1} and figure~\ref{fig:retrograde2} are examples from  \cite{Ch08,ChenLin09}. 
In  figure~\ref{fig:retrograde1}  and  figure~\ref{fig:euler}  $V_2$ is the $y$-axis. 
In  figure~\ref{fig:retrograde2}  $V_2$ forms a $\pi/3$ angle with $V_1$.

\begin{figure}[ht]
\begin{center}
\includegraphics[width=40mm]{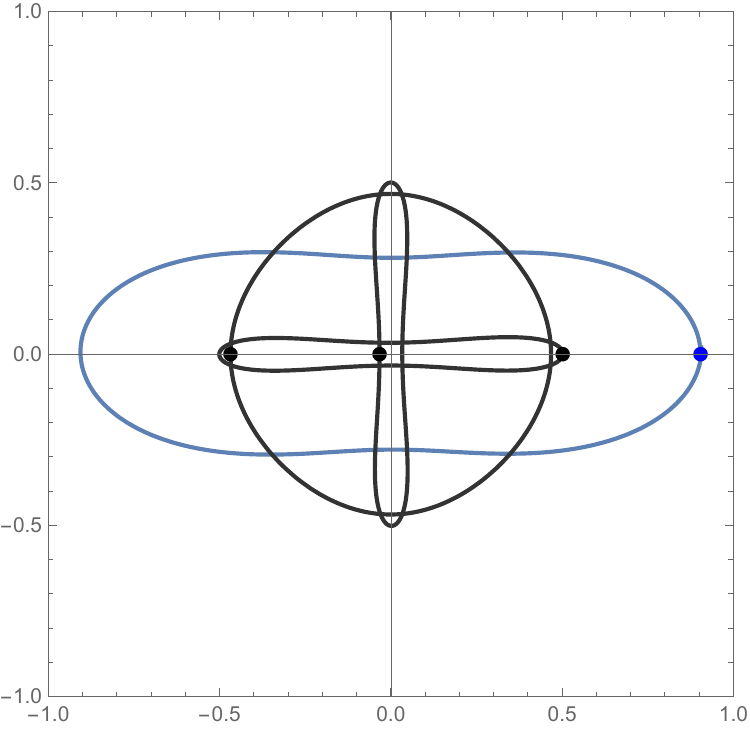} \quad 
\includegraphics[width=40mm]{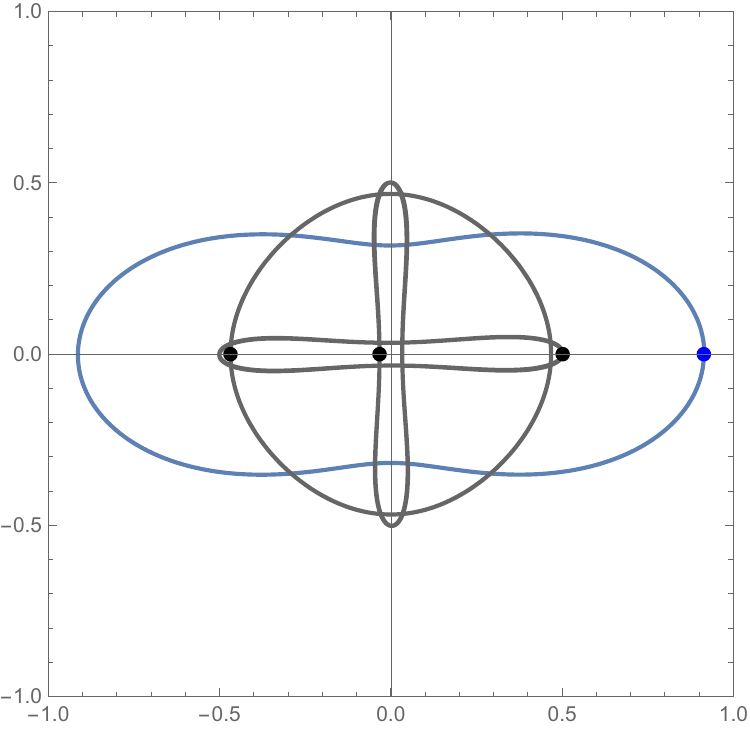} \quad 
\includegraphics[width=40mm]{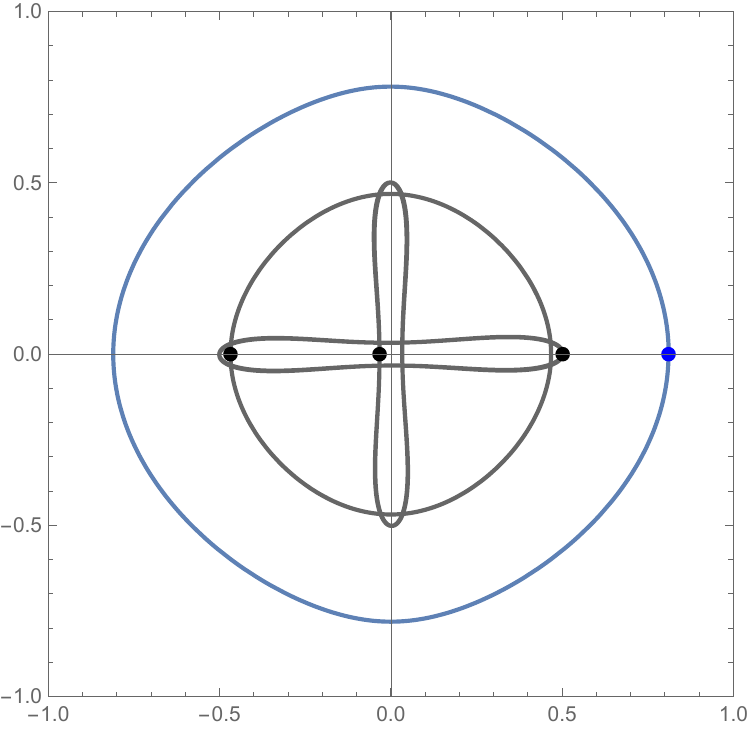} 
\caption{A retrograde solution of the 3-body problem plus a zero mass. }
\label{fig:retrograde1}     
\end{center}
\end{figure}

{\small
\begin{tabular} { l | l | l } 
\multicolumn{3}{l}{Initial Data of Primaries  } \\
\hline 
   $q_1(0)=(0.5014, 0)$ &  $q_2(0)=(-0.03310, 0)$ &  $q_3(0)=(-0.4683, 0)$    \\     
   $\dot{q}_1(0) = (0, -0.2858)$ & $\dot{q}_2(0) = (0, 1.8042)$ & $\dot{q}_3(0) = (0, -1.5184)$         \\
\hline 
\end{tabular}     \smallskip
 
\begin{tabular} {l} 
Initial Data of Zero Mass \\
\hline                 
(a) \;\;  $ x(0) = (0.905313, 0), \;\; \dot{x}(0) =  (0, 1.20303798)$ \\ \hline
(b) \;\;  $ x(0) = (0.91364, 0), \;\; \dot{x}(0) =  (0, -1.364363175)$ \\ \hline
(c) \;\;  $ x(0) = (0.81043, 0), \;\; \dot{x}(0) =  (0, -2.6976445)$ \\
\hline
\end{tabular}   
\captionof{table}{Numerical data for orbits in figure~\ref{fig:retrograde1}  with masses $(m_1,m_2,m_3)=(1,1,1)$. 
The accuracy of these data is limited by the data in supplements of \cite{ChenLin09}. } \label{table:example1}
}

\begin{figure}[ht]
\begin{center}
\includegraphics[width=40mm]{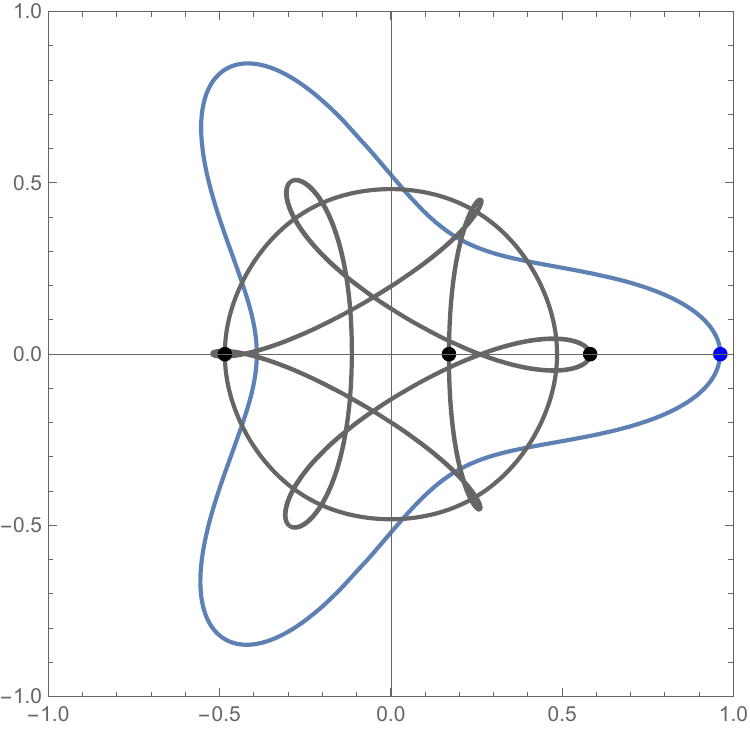} \quad 
\includegraphics[width=40mm]{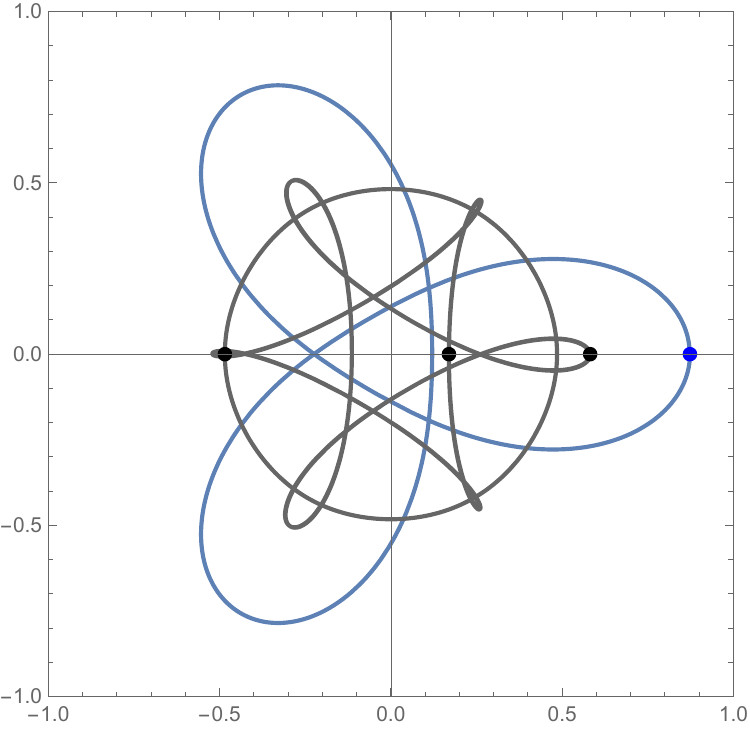} \quad 
\includegraphics[width=40mm]{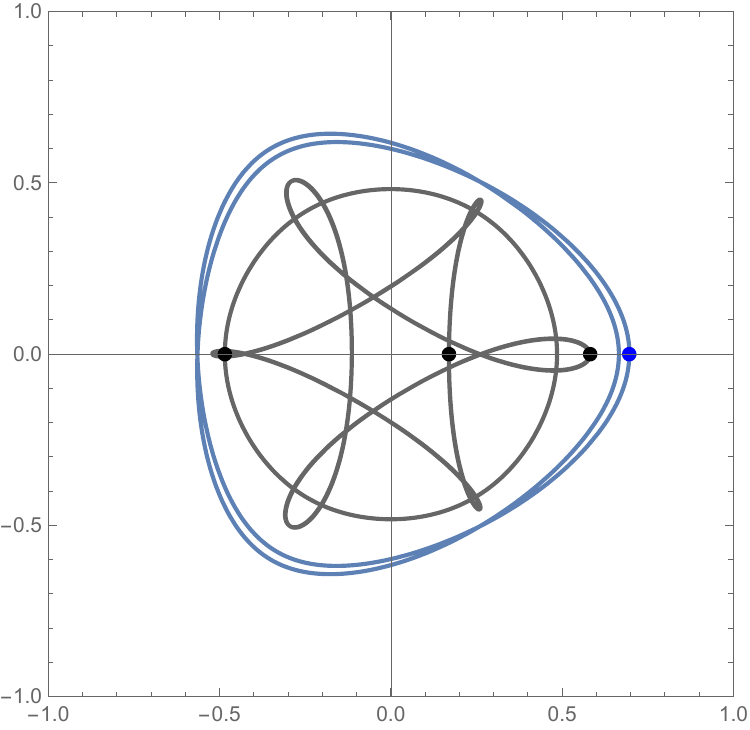} 
\caption{Another retrograde solution of the 3-body problem plus a zero mass. }
\label{fig:retrograde2}     
\end{center}
\end{figure}

{\small
\begin{tabular} { l | l | l } 
\multicolumn{3}{l}{Initial Data of Primaries } \\
\hline 
   $q_1(0)=(0.581737, 0)$ &  $q_2(0)=(0.169550, 0)$ &  $q_3(0)=(-0.484682, 0)$    \\     
   $\dot{q}_1(0) = (0, -0.361268)$ & $\dot{q}_2(0) = (0, 1.55210)$ & $\dot{q}_3(0) = (0, -1.02492)$         \\
\hline 
 \end{tabular}     \smallskip
 
\begin{tabular} {l} 
Initial Data of Zero Mass \\
\hline                 
(a) \;\;  $ x(0) = (0.9616965308, 0), \;\; \dot{x}(0) =  (0, 0.8154130153)$ \\ \hline
(b) \;\;  $ x(0) = (0.873058181, 0), \;\; \dot{x}(0) =  (0, -1.377300)$ \\ \hline
(c) \;\;  $ x(0) = (0.695917385, 0), \;\; \dot{x}(0) =  (0, -3.72196373248)$ \\
\hline
\end{tabular}     
\captionof{table}{Numerical data for orbits in figure~\ref{fig:retrograde2}  with masses $(m_1,m_2,m_3)=(0.6,0.8,1)$. 
Again, the accuracy of these data is limited by the data in supplements of \cite{ChenLin09}.} \label{table:example1}
}

\begin{figure}[ht]
\begin{center}
\includegraphics[width=35mm]{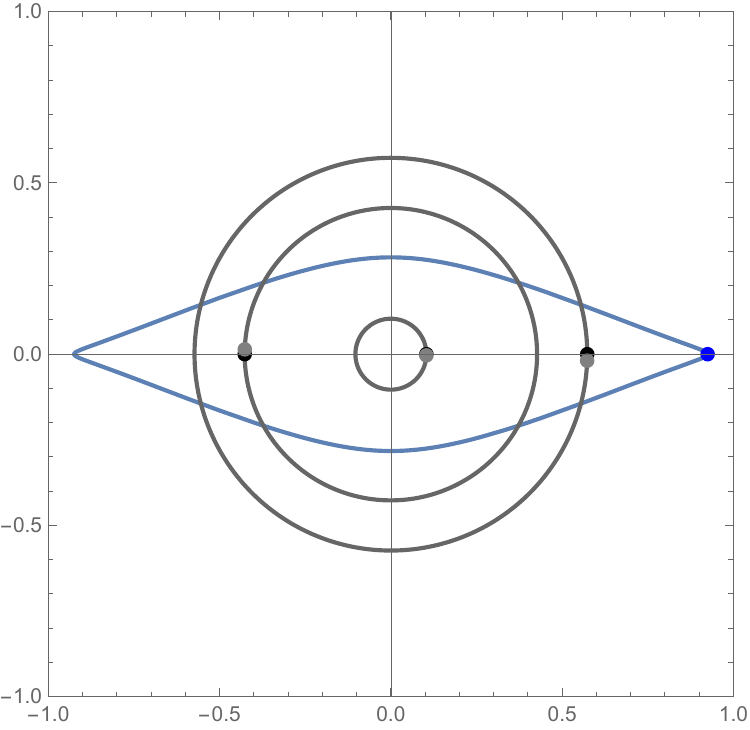} \;
\includegraphics[width=35mm]{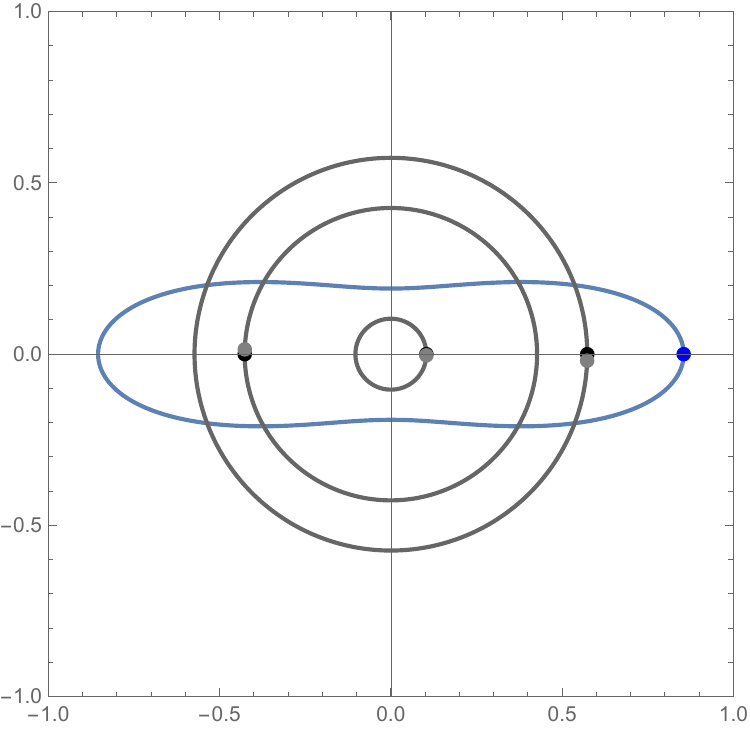} \;
\includegraphics[width=35mm]{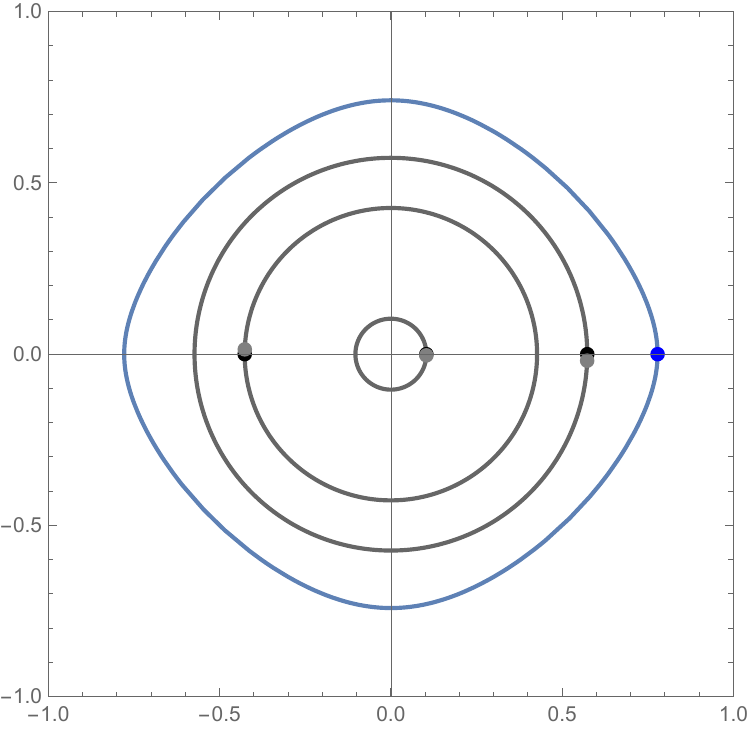} \;
\includegraphics[width=35mm]{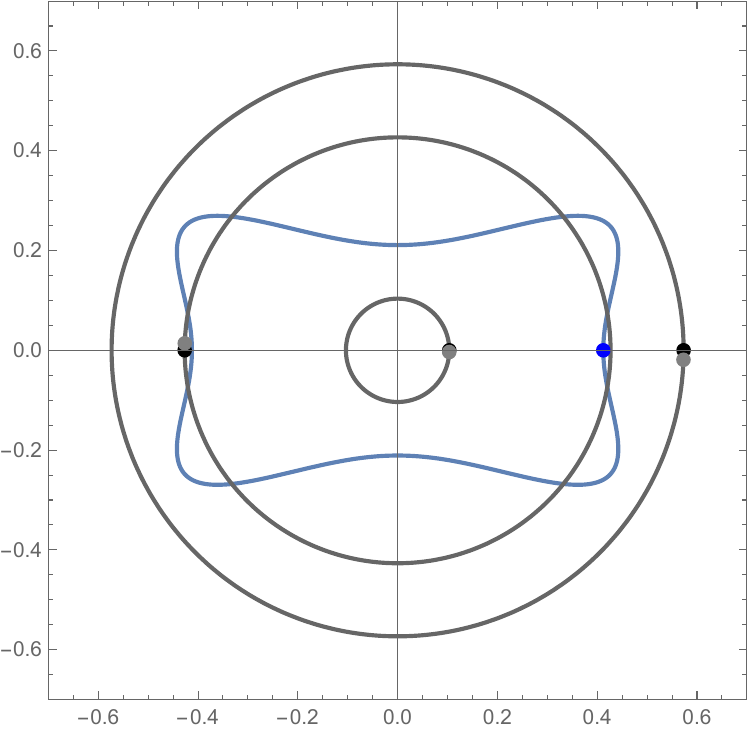} 
\caption{An Euler relative equilibrium solution of the 3-body problem plus a zero mass. }
\label{fig:euler}     
\end{center}
\end{figure}

{\small
\begin{tabular} { l | l | l } 
\multicolumn{3}{l}{Initial Data of Primaries } \\
\hline 
   $q_1(0)\!=\!(0.573225605518457, 0)$\! &  $q_2(0)\!=\!(0.10354878896308595, 0)$\! &  $q_3(0)\!=\!(-0.426774394481543, 0)$    \\     
   $\dot{q}_1(0) \!=\! (0, 1.6285108801044283)$\! & $\dot{q}_2(0) \!=\! (0, 0.29417794289824895)$\! & $\dot{q}_3(0) \!=\! (0, -1.2124488823812563)$         \\
\hline 
 \end{tabular}     \smallskip
 
\begin{tabular} {l} 
Initial Data of Zero Mass \\
\hline                 
(a) \;\;  $ x(0) = (0.92465649920, 0), \;\; \dot{x}(0) =  (0, 0.14802907824)$ \\ \hline
(b) \;\;  $ x(0) = (0.85474070604, 0), \;\; \dot{x}(0) =  (0, -1.0390306513997)$ \\ \hline
(c) \;\;  $ x(0) = (0.778532758929, 0), \;\; \dot{x}(0) =  (0, -2.3280140192)$ \\ \hline
(d) \;\;  $ x(0) = (0.4122450607052, 0), \;\; \dot{x}(0) =  (0, -2.0117809410548)$ \\ \hline
\end{tabular}     \smallskip
\captionof{table}{Numerical data for orbits in figure~\ref{fig:euler}  with masses $(m_1,m_2,m_3)=(0.6,0.8,1)$.  
The data are accurate to the 10th decimal places. } \label{table:example1}
}





\bibliographystyle{plain}
\bibliography{RefRestricted}



\end{document}